\documentclass[12pt]{amsart}
\usepackage{amsthm}
\usepackage{amsmath}
\usepackage{amssymb}
\usepackage{comment}
\usepackage{enumerate}
\usepackage{amsfonts}
\usepackage{mathtools}
\usepackage{afterpage}
\usepackage{hyperref}
\usepackage{todonotes}
\usepackage{algorithm,algorithmic}
\usepackage{multicol}
\usepackage{circuitikz}
\usepackage{tikz-cd}
\usetikzlibrary{math}
\usetikzlibrary{shapes.geometric}
\usepackage[utf8]{inputenc}
\usepackage[margin=1.1in]{geometry}

\usepackage[capitalize,nameinlink]{cleveref}
\crefname{theorem}{Theorem}{Theorems}
\crefname{lemma}{Lemma}{Lemmas}
\crefname{claim}{Claim}{Claims}

\newtheorem{theorem}{Theorem}[section]
\newtheorem{def-prop}[theorem]{Definition-Proposition}
\newtheorem{prop}[theorem]{Proposition}
\newtheorem{conjecture}[theorem]{Conjecture}
\newtheorem{lemma}[theorem]{Lemma}

\theoremstyle{definition}
\newtheorem{ex}[theorem]{Example}

\newtheorem{defin}[theorem]{Definition}

\theoremstyle{remark}
\newtheorem*{remark}{Remark}

\def\ShowAuthNotes{1}
\ifnum\ShowAuthNotes=1
\newcommand{\authnote}[2]{\textcolor{blue}{[{\footnotesize {\bf #1:} { {#2}}}]}}
\else
\newcommand{\authnote}[2]{}
\fi

\newcommand{\X}{\mathcal{X}}

\newcommand{\II}{\mathfrak{I}}
\newcommand{\Z}{\mathbb{Z}}
\newcommand{\cE}{\mathcal{E}}

\newcommand{\CR}{\mathrm{cr}}
\newcommand{\MM}{\mathcal{M}}
\newcommand{\cC}{\mathcal{C}}
\newcommand{\TNC}{\mathcal{TC}}
\newcommand{\NCM}{\mathcal{NM}}
\newcommand{\NP}{\mathcal{NP}}
\def\P{{\mathbf{P}}}
\def\C{{\mathbb C}}
\def\T{{\mathbf{T}}}

\DeclareMathOperator{\wt}{wt}
\DeclareMathOperator{\Gr}{Gr}
\DeclareMathOperator{\LG}{LG}
\DeclareMathOperator{\SL}{SL}
\DeclareMathOperator{\Sp}{Sp}
\DeclareMathOperator{\cyc}{cyc}

\title{Electrical networks and the Grove algebra}
\author{Yibo Gao}
\address{Department of Mathematics, University of Michigan, \mbox{Ann Arbor, MI 48109}}
\email{\href{mailto:gaoyibo@umich.edu}{{\tt gaoyibo@umich.edu}}}
\author{Thomas Lam}
\address{Department of Mathematics, University of Michigan, \mbox{Ann Arbor, MI 48109}}
\email{\href{mailto:tfylam@umich.edu}{{\tt tfylam@umich.edu}}}
\author{Zixuan Xu}
\address{Department of Mat7ematics, Massachusetts Institute of Technology, \mbox{Cambridge, MA 02139}}
\email{\href{mailto:zixuanxu@mit.edu}{{\tt zixuanxu@mit.edu}}}

\thanks{T.L.\ was supported by Grant No.~DMS-1953852 from the National Science Foundation.} 
\date{\today}

\begin{document}

\begin{abstract}
We study the ring of regular functions on the space of planar electrical networks, which we coin the \emph{grove algebra}.  This algebra is an electrical analogue of the Pl\"ucker ring studied classically in invariant theory.  We develop the combinatorics of double groves to study the grove algebra, and find a quadratic Gr\"obner basis for the grove ideal.
\end{abstract}
\maketitle

\section{Introduction}\label{sec:intro}
Electrical resistor networks are modeled by undirected graphs whose edges are given positive weights, and have been studied for two centuries.  The physical axioms governing such networks, Ohm's Law and Kirchhoff's Law, were discovered in the 19th century.  In the last few decades, planar resistor networks have been studied from a modern perspective in the works of Curtis--Ingerman--Morrow \cite{Curtis-Ingerman-Morrow}, de Verdi\`ere--Gitler--Vertigan \cite{deVerdiere-Gitler-Vertigan}, Kenyon--Wilson \cite{Kenyon-Wilson}, Lam--Pylyavskyy \cite{Lam-Pylyavskyy}, Lam \cite{lam2004electroid}, Chepuri--George--Speyer \cite{chepuri2021electrical}, and Bychkov--Gorbounov--Kazakov--Talalaev \cite{bychkov2021electrical}, and others.

\subsection{Grove coordinates}
In Kirchhoff's classical work, he gave formulae for the effective resistance of an electrical network by counting spanning trees.  We formalize the tree enumeration in terms of groves \cite{Kenyon-Wilson}.  Let $\Gamma$ be a planar electrical network with $n$ boundary nodes.  A \emph{grove} in $\Gamma$ is a spanning forest $F$ such that every connected component contains a boundary node.  Each grove has a noncrossing boundary partition $\sigma(F)$.

The grove coordinates $L_\sigma(\Gamma)$ count groves in $\Gamma$ with chosen boundary partition $\sigma$ and determine $\Gamma$ up to electrical equivalence (series-parallel and $Y-\Delta$ moves).  In \cite{lam2004electroid}, the second author constructed a compactification of the space of planar electrical networks with $n$ boundary vertices, using the grove coordinates $(L_\sigma)$.  Furthermore, an embedding of this compactification into the Grassmannian $\Gr(n-1,2n)$ is constructed; we denote the image of this compactification by $\X_{n,\geq 0}$ and let $\X_n \subset \Gr(n-1,2n)$ denote its Zariski-closure, an irreducible subvariety of the Grassmannian.  

The starting point of our work is the analogy between the Grassmannian $\Gr(k,n)$ and the variety $\X_n$, and between the Pl\"ucker coordinates $\Delta_I$ and the grove coordinates $L_\sigma$.

    \begin{figure}[t]
        \centering
        \begin{tabular}{|c|c|}
        \hline
             $\Gr(k,n)$ & $\X_n$  \\
             Totally nonnegative Grassmannian $\Gr(k,n)_{\geq 0}$ & Space of cactus networks $\X_{n,\geq 0}$ \\
             Dimer model or plabic graph & Cactus network $\Gamma$ \\
             \hline
             $k$-element subsets of $[n]$ & noncrossing partitions on $[n]$ \\
             Pl\"ucker coordinates $\Delta_I$ & Grove coordinates $L_\sigma$ \\
             Perfect matchings & Spanning forests \\
             \hline
             Partial noncrossing matchings $A_{k,n}$ & $3$-noncrossing matchings $\TNC_n$ \\
             Temperley-Lieb immanants $F_{\tau,T}$ & Bush basis $B_{\xi}$ \\
             Double dimers & Double groves \\
             \hline
             Semistandard tableaux $\T$ & Nested Dyck paths $\P \in \cC^{(d)}_n$\\
             Standard monomial basis $\Delta_{\T}$ & Standard grove basis $L_\P$ \\
             Dual canonical basis $H(\T)$ & Electrical canonical basis $E_\P$ ?? \\
             \hline
             Bounded affine permutations & Matchings on $[2n]$ \\
             Positroid variety $\Pi_f$ & Electroid variety $\X_\tau$ \\ 
             \hline
        \end{tabular}
        \caption{Parallels between $\X_n$ and $\Gr(k,n)$}
        \label{fig:analogy}
    \end{figure}

\subsection{Pl\"ucker algebra and Grove algebra}
The Pl\"ucker algebra is the homogeneous coordinate ring $R(k,n):=\C[\Gr(k,n)]$ of the Grassmannian.  It can be identified with the quotient of the polynomial ring $\C[\Delta_I]$ generated by the Pl\"ucker coordinates $\Delta_I$, labeled by $k$-element subsets $I \subset \binom{[n]}{k}$, modulo the Pl\"ucker ideal generated by the Pl\"ucker relations. The Pl\"ucker algebra $R(k,n)$ is also isomorphic to the ring of $\SL(k)$-invariants in the polynomial functions on a $k\times n$ matrix.  In this latter setting, the description of $R(k,n)$ in terms of Pl\"ucker coordinates and Pl\"ucker relations are known as the first and second fundamental theorems of invariant theory.  

We recall the following classical theorem regarding the basis for the Pl\"ucker ideal and the Pl\"ucker ring; see \cite{Sturmfels-White}. 


\begin{theorem}\label{thm:plucker_basis}\
\begin{enumerate}
    \item The Pl\"ucker relations form a quadratic Gr\"obner basis for the Pl\"ucker ideal.
    \item The degree $d$ homogeneous piece $R(k,n)_d$ of the Pl\"ucker ring has basis the set of standard monomials $\Delta_\T = \Delta_{T_1}\Delta_{T_2} \cdots \Delta_{T_d}$ as $(T_1,\ldots,T_d)$ varies over the columns of a semistandard Young tableaux $\T$ with rectangular shape $k \times d$ and entries in $\{1,2,\ldots,n\}$.
\end{enumerate}

\end{theorem}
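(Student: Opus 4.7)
The plan is to execute the classical straightening-law argument. First I would fix a monomial order on $\C[\Delta_I]$: order the subsets $I = \{i_1 < \cdots < i_k\} \in \binom{[n]}{k}$ lexicographically, and extend this to monomials $\Delta_{I_1}\cdots\Delta_{I_d}$ by sorting their indices and comparing the resulting sequence lexicographically. A monomial is declared \emph{standard} when, after sorting $I_1 \le \cdots \le I_d$, placing each $I_j$ as column $j$ produces a semistandard Young tableau of rectangular shape $k\times d$.

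Next I would establish a quadratic Pl\"ucker relation of the form
\[
\Delta_I \Delta_J = \sum \pm \Delta_{I'} \Delta_{J'}
\]
for every non-standard pair $(I,J)$, engineered so that each pair $(I',J')$ on the right is strictly smaller under the chosen order. These arise from the classical three-term Pl\"ucker syzygies by swapping at the first row index that causes the column-pair to violate semistandardness. Iterating this rewrite --- the straightening algorithm --- must terminate by well-foundedness of the order and expresses every monomial as a linear combination of standard monomials. This establishes that standard monomials span $R(k,n)_d$.

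The main obstacle is to prove linear independence of the standard monomials. I would resolve this through a dimension count: by the Borel--Weil theorem, $R(k,n)_d$ is the space of global sections of the $d$-th power of the Pl\"ucker line bundle on $\Gr(k,n)$, which is the irreducible $\SL_n$-representation with character the Schur polynomial $s_{d^k}$. The Weyl dimension formula (equivalently, the hook-content formula) identifies $\dim R(k,n)_d$ with the number of semistandard Young tableaux of shape $k\times d$ having entries in $[n]$. Since the standard monomials are indexed by exactly these tableaux and already span, they must form a basis, yielding part (2). This representation-theoretic input is the nontrivial step; one could instead prove independence directly by constructing a dual family of linear functionals on $R(k,n)_d$ (e.g.\ via evaluation at suitable generic $k\times n$ matrices filtered by the term order) that triangularize against the standard monomials.

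Finally, part (1) follows formally from part (2). By construction, the leading term of the straightening relation indexed by a non-standard pair $(I,J)$ is $\Delta_I \Delta_J$, and a monomial $\Delta_{I_1}\cdots\Delta_{I_d}$ fails to be standard precisely when it is divisible by one such leading term. Since standard monomials descend to a basis of the quotient $R(k,n)$ by part (2), Macaulay's basis criterion forces the Pl\"ucker relations to be a (quadratic) Gr\"obner basis for the Pl\"ucker ideal under this term order.
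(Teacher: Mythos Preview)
The paper does not actually prove \cref{thm:plucker_basis}; it is stated as a classical background result with a citation to Sturmfels--White \cite{Sturmfels-White}. So there is no ``paper's own proof'' to compare against directly. Your outline is the standard straightening-law argument and is essentially correct, with one small imprecision: the relations used to straighten a non-semistandard pair $(I,J)$ are not the three-term Pl\"ucker syzygies but the general quadratic Pl\"ucker relations obtained by swapping a tail of $I$ with all choices of subsets of $J$ of the same size (exactly as the paper writes them in \cref{def:noncomparable_relation} for the grove case). Your description ``swapping at the first row index that causes the column-pair to violate semistandardness'' is the right mechanism; just drop the word ``three-term.''

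It is worth noting that the paper's proof of the grove-algebra analogue, \cref{thm:tableaux-basis}, follows precisely the template you propose: define a term order, exhibit for each non-standard pair a relation whose leading term is that pair (\cref{lem:straightening}), conclude that standard monomials span, and then invoke an independent dimension count (\cref{prop:dim_G_nd}, via Borel--Weil for $\Sp(2n-2)$) to get linear independence and hence the Gr\"obner basis property. So your plan for \cref{thm:plucker_basis} is exactly parallel to what the paper carries out in detail for $G_n$.
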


In analogy to the objects involved in \cref{thm:plucker_basis}, we define the \emph{grove algebra} $G_{n}:=\C[\X_n]$ to be homogeneous coordinate ring of $\X_n$ and use $G_{d,n}$ denote the degree $d$ homogeneous piece of $G_n$.
There is a natural bijection $P \mapsto \sigma(P)$ (see \cref{sec:catalan}) from Dyck paths of semilength $2n$ to noncrossing partitions of $[n]$.  We abuse notation by writing $L_P$ for $L_{\sigma(P)}$. 
Let $\cC^{(d)}_n$ denote the set of $d$-tuples of nested Dyck paths (see \cref{sec:catalan}). For $\P = (P_1,\ldots,P_d) \in \cC^{(d)}_n$, we define the standard grove monomials
$$
L_\P:= L_{P_1}L_{P_2} \cdots L_{P_d}.
$$
In this work, we present a result analogous to \cref{thm:plucker_basis} for the grove algebra, stated as follows.

\begin{theorem}[{\cref{thm:tableaux-basis} and \cref{prop:dim_G_nd}}]\
\begin{enumerate}
    \item 
    The grove algebra $G_n$ is generated by the grove coordinates $L_\sigma$ modulo the ideal $\II_n$ generated by the quadratic relations $r_{P,Q}$ described in \cref{def:noncomparable_relation}.  The relations $r_{P,Q}$ give a Gr\"obner basis of the ideal $\II_n$, with respect to a term order described in \cref{sec:tableaux}.
    \item
    We have $\dim(G_{d,n}) = \prod_{1 \leq i \leq j \leq n-1} \frac{i+j+2d}{i+j}$ and $G_{d,n}$ has basis the set of
    standard grove monomials $L_\P$, $\P \in \cC^{(d)}_n$.
\end{enumerate}    
\end{theorem}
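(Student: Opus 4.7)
The theorem bundles together a Gr\"obner basis claim with a dimension formula, and the natural plan is to prove both simultaneously via the classical \emph{spanning plus dimension-matching} strategy. I would first assume, as should be verified earlier in the paper, that each quadratic $r_{P,Q}$ from \cref{def:noncomparable_relation} actually vanishes on $\X_n$; most plausibly this is proved by expanding $L_\sigma L_\tau$ as a weighted sum over double groves and exhibiting an involution on the non-nested side. Granted this, the leading term of $r_{P,Q}$ under the chosen term order is exactly the ``non-nested'' product $L_P L_Q$, so the monomials avoiding all leading terms are in bijection with $\cC_n^{(d)}$. A straightening argument---iteratively rewriting any non-nested factor appearing in $L_{P_1}\cdots L_{P_d}$ via $r_{P,Q}$---then reduces every degree-$d$ monomial modulo $\II_n$ to a $\C$-linear combination of standard grove monomials $L_\P$. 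Termination is guaranteed because the term order restricts to a well-ordering on monomials of fixed degree. This step yields the upper bound $\dim G_{d,n}\le |\cC_n^{(d)}|$.

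Next, I would independently compute $\dim G_{d,n}$ and match it to $|\cC_n^{(d)}|$. Combinatorially, the Lindstr\"om--Gessel--Viennot lemma expresses $|\cC_n^{(d)}|$ as a determinant of binomials that evaluates, via a Jacobi--Trudi or Cauchy--Binet manipulation, to the product $\prod_{1\le i\le j\le n-1}(i+j+2d)/(i+j)$. Geometrically, I would aim to establish the matching lower bound in one of three ways: (a) exhibit a flat degeneration of $\X_n$ to the toric variety cut out by $\mathrm{in}(\II_n)$, whose Hilbert function is the lattice-point count of an order polytope in bijection with $\cC_n^{(d)}$; (b) construct an explicit injection from the span of standard grove monomials into a dual space built from the bush basis $B_\xi$ of double groves, whose size is already under combinatorial control; or (c) pull back Hilbert-series information along the embedding $\X_n\hookrightarrow \Gr(n-1,2n)$ and use known formulas for positroid-type subvarieties. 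Approach (a) seems cleanest because the Gr\"obner basis conclusion and the degeneration give one another mutual support.

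Once both inequalities are in hand, the standard grove monomials $\{L_\P\mid \P\in\cC_n^{(d)}\}$ must be a $\C$-basis of $G_{d,n}$; their linear independence then automatically promotes $\{r_{P,Q}\}$ from a generating set of $\II_n$ to a Gr\"obner basis, and simultaneously forces $\II_n$ to coincide with the full grove ideal. The main obstacle is the dimension lower bound. The straightening computation is essentially bookkeeping once the term order is properly tuned, and the LGV evaluation is standard; what is genuinely delicate is bridging the combinatorial count to the algebraic-geometric dimension of $\X_n$. If the straightforward degeneration route runs into trouble---for instance, if flatness is not immediate---one can fall back on building the lower bound directly from spanning forests, using Kirchhoff-type formulas to produce enough explicit linearly independent groves in degree $d$, indexed by $\cC_n^{(d)}$.
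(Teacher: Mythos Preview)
Your overall architecture---spanning via straightening, then dimension matching---is exactly right and matches the paper. The straightening step is also handled essentially as you describe: \cref{lem:straightening} shows the leading term of $r_{P,Q}$ is $L_QL_P$, and \cref{thm:tableaux-basis} iterates this to reduce every monomial to a combination of standard ones, giving $\dim G_{n,d}\le |\cC_n^{(d)}|$.

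Where you diverge from the paper is the dimension computation, and this is the substantive point. The paper does \emph{not} use any of your three proposed routes. Instead it imports the identification $\X_n\cong\LG(n-1,2n-2)$ from \cite{chepuri2021electrical,bychkov2021electrical} (\cref{thm:LG}), so that by Borel--Weil each graded piece $G_{n,d}$ is the irreducible $\Sp(2n-2)$-module $V_{d\omega_{n-1}}$, and then evaluates the Weyl dimension formula to obtain the product $\prod_{1\le i\le j\le n-1}(i+j+2d)/(i+j)$ directly (\cref{prop:dim_G_nd}). The match with $|\cC_n^{(d)}|$ is then the standard LGV/product-formula computation (\cref{prop:prodformula}). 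Your routes all have obstacles as stated: (a) flatness of the degeneration is usually certified \emph{by} the Hilbert-function equality you are trying to prove, so this is circular without independent input; (b) the bush basis $B_\xi$ exists only in degree two---extending it to all $d$ is precisely the conjectural electrical canonical basis, so you cannot lean on it; (c) $\X_n$ is not a positroid variety, so off-the-shelf positroid Hilbert series do not apply.

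One smaller correction: the vanishing of $r_{P,Q}$ is not established by an involution on double groves. The $r_{P,Q}$ are specific Pl\"ucker relations written in the $\Delta_I=\sum_{\sigma\in\cE(I)}L_\sigma$ variables (\cref{def:noncomparable_relation}), and they lie in $\II_n$ because \cref{thm:delta-to-F} (which does use the bush basis and double groves) shows these $\Delta_I$ satisfy the Pl\"ucker relations; see \cref{prop:tfy_relations} and \cref{thm:embed}. So the double-grove machinery enters, but indirectly through the Temperley--Lieb immanant expansion rather than a sign-reversing involution on products $L_\sigma L_{\sigma'}$.
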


Chepuri, George, and Speyer \cite{chepuri2021electrical} and Bychkov, Gorbounov, Kazakov, and Talalaev \cite{bychkov2021electrical} have shown that the variety $\X_n$ is isomorphic to the Lagrangian Grassmannian $\LG(n-1,2n-2)$, extending earlier work of Lam and Pylyavskyy \cite{Lam-Pylyavskyy} relating electrical networks to the symplectic group.
By Borel-Weil theory, each homogeneous piece $G_{d,n}$ of $G_n$ is isomorphic to an irreducible representation of the symplectic group $\Sp(2n-2)$; see \cref{sec:Lagrangian}.  The relation between our description of $G_{d,n}$, including the standard grove basis $\{L_\P\}$, and the usual constructions from highest weight theory of $\Sp(2n-2)$ is far from being clear. It is an interesting problem to compare these approaches.

\subsection{Electrical canonical basis}
Recall that the standard monomial basis $\Delta_{\T}$ is not compatible with the cyclic symmetry of $\Gr(k,n)$. Lusztig's dual canonical basis of $R(k,n)$ is compatible with cyclic symmetry and exhibits remarkable positivity properties.  See \cite{lam2019cyclic} for the following result.

\begin{theorem}\label{thm:lam19_canonical_basis}
The space $R(k,n)_d$ has a dual canonical basis $H(\T)$ with the following properties:
\begin{enumerate}
\item For $d=1$, we have $H(\T)= \Delta_I$, where $I$ is the set of entries in the one-column tableau $\T$.
\item For $d=2$, the set $\{H(\T)\}$ 
is exactly the set of Temperley-Lieb immanants \cite{lam2015dimers} $\{F_{\tau,T} \mid (\tau, T)\in A_{k,n}\}$ which we describe in \cref{sec:F-basis}.
\item For any semistandard tableau $\T$, the function $H(\T)$ is a nonnegative function on the totally nonnegative Grassmannian $\Gr(k,n)_{\geq 0}$.
\item For any $T$, we have $\chi^*(H(\T)) = H(\chi(\T))$
where $\chi^*$ denotes the pullback map induced by the signed cyclic symmetry $\chi:\Gr(k, n)\to \Gr(k, n)$, and $\chi(\T)$ is the \emph{promotion} of $\T$.
\end{enumerate}
\end{theorem}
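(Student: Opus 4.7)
The plan is to identify each $H(\T)$ with an element of Lusztig's dual canonical basis and then extract (1)--(4) from the general theory. One starts with the dual canonical basis of the quantum coordinate ring $\mathcal{O}_q(\mathrm{Mat}(k,n))$, specializes $q=1$, and restricts to $\SL(k)$-invariants, obtaining a basis of $R(k,n)$ indexed by rectangular semistandard tableaux of shape $k\times d$. Part (1) then follows since each Pl\"ucker coordinate $\Delta_I$ is an extremal weight vector of an irreducible $\mathrm{GL}(n)$-submodule of $R(k,n)_1$ and so must coincide with the corresponding Lusztig element, while part (3) is Lusztig's total positivity theorem for the dual canonical basis on $\Gr(k,n)_{\ge 0}$.

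For part (2), I would invoke the identification of the degree-$2$ dual canonical basis with the Temperley-Lieb immanants $F_{\tau,T}$, going back to Frenkel--Khovanov--Kirillov and Rhoades--Skandera. The input is a Schur-Weyl-type duality between $R(k,n)_2$ and the Temperley-Lieb algebra: under this duality, the Kazhdan-Lusztig basis of the Temperley-Lieb algebra (indexed by noncrossing matchings $(\tau,T) \in A_{k,n}$) transfers to the immanant basis $\{F_{\tau,T}\}$, while the dual canonical basis transfers to the Kazhdan-Lusztig basis. Matching these two identifications yields $\{H(\T)\}_{d=2} = \{F_{\tau,T}\}$.

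The main obstacle is part (4). The approach is first to observe that $\chi$ is a regular automorphism of $\Gr(k,n)$ preserving the totally nonnegative part, so $\chi^*$ preserves Lusztig's bar involution and the integral form underlying $\{H(\T)\}$; by the uniqueness (up to sign) of the dual canonical basis, $\chi^*$ must act on $\{H(\T)\}$ by a signed permutation $\pi$. The remaining task is to identify $\pi$ with promotion. For this I would compute $\chi^*(H(\T))$ in the standard monomial basis, exploit the unitriangular change of basis from $\{H(\T)\}$ to $\{\Delta_\T\}$, and match leading terms on highest-weight vectors. Rhoades' cyclic sieving phenomenon for promotion on rectangular tableaux provides a numerical consistency check, and more structurally, the compatibility of promotion with the affine crystal structure (Shimozono; Kashiwara--Nakashima) is the mechanism that forces $\pi$ to equal promotion. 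Carrying out this last identification cleanly---rather than just establishing that $\pi$ is \emph{some} signed permutation of combinatorial origin---is the principal difficulty.
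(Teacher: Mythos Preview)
The paper does not contain a proof of this statement. Theorem~1.3 is quoted as a known result from \cite{lam2019cyclic} (introduced by ``See \cite{lam2019cyclic} for the following result''), and no proof or proof sketch appears anywhere in the present paper. So there is no ``paper's own proof'' against which to compare your proposal.

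That said, your outline is broadly in the right spirit for how the result is established in \cite{lam2019cyclic}: parts (1) and (3) do come essentially for free from the general theory of Lusztig's dual canonical basis and its positivity on the totally nonnegative part, and part (2) does go through the Rhoades--Skandera/Temperley--Lieb identification you mention. For part (4), your strategy of arguing that $\chi^*$ permutes the dual canonical basis up to sign and then pinning down the permutation is correct in outline, but the actual argument in \cite{lam2019cyclic} runs through cyclic Demazure crystals rather than a direct leading-term computation in the standard monomial basis; the crystal-theoretic route handles both the sign and the identification with promotion simultaneously and more cleanly than the unitriangularity argument you sketch, which as you note is where the principal difficulty lies.
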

In \cite{lam2019cyclic} (see also \cite{Lam-CDM}) it is further shown that the dual canonical basis $H(\T)$ is compatible with restrictions to the homogeneous coordinate ring of positroid varieties $\Pi_f$ \cite{Knutson-Lam-Speyer}.

Similar to the standard monomial basis for $\Gr(k,n)$, the standard grove basis $L_\P$ is not compatible with the cyclic symmetry of planar electrical networks (or that of $\X_n$). Therefore we conjecture that there exists a canonical basis for the space $G_{d,n}$ analogous to the dual canonical basis $H(\T)$ for $R(k_n)_d$ in \cref{thm:lam19_canonical_basis}.

\begin{conjecture}
The space $G_{d,n}$ has an electrical canonical basis $E_\P$, $\P \in \cC^{(d)}_n$ with the following properties:
\begin{enumerate}
    \item For $d = 1$, we have $E_P = L_P$.
    \item The electrical canonical basis takes nonnegative values on the compactification of the space of electrical networks $\X_{n,\geq 0}$.
    \item Any monomial in the grove coordinates $L_\sigma$ expands positively into the electrical canonical basis $E_P$.
    \item There are actions of the cyclic group $\Z/2n\Z = \langle \chi \rangle$ on $\cC^{(d)}_n$ and on $\X_n$, preserving $\X_{n,\geq 0}$, such that $\chi^*(E_\P) = E_{\chi(\P)}$, where $\chi^*$ denotes the pullback on $\X_n$ induced by $\chi$.
\end{enumerate}
\end{conjecture}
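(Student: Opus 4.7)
The natural plan is to mimic the construction of the dual canonical basis $H(\T)$ for $R(k,n)_d$ and transfer it to $G_{d,n}$ via the identification $\X_n \cong \LG(n-1,2n-2)$.  My first step would be to exploit the embedding of $\X_n$ into $\Gr(n-1,2n)$ (used already to define $\X_n$): this realizes $G_n$ as a quotient of $R(n-1,2n)$, and the grove coordinates $L_\sigma$ are (up to scalar) pullbacks of particular Pl\"ucker coordinates $\Delta_I$, which are themselves dual canonical basis elements.  One then attempts to \emph{restrict} the dual canonical basis from $R(n-1,2n)_d$ to $G_{d,n}$, defining $E_\P$ as the image of $H(\T(\P))$ under the surjection, for a suitable bijection $\P \leftrightarrow \T(\P)$ between nested Dyck paths and a distinguished family of semistandard tableaux.

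The second step would be to verify the properties in low degree as a sanity check.  For $d=1$ this is immediate from \cref{thm:lam19_canonical_basis}(1) combined with the identification $L_P = \Delta_{I(P)}$.  For $d=2$, I would show that the images of the Temperley-Lieb immanants $F_{\tau,T}$ under the restriction to $G_{2,n}$ agree (up to relations coming from the grove ideal $\II_n$) with the bush basis $B_\xi$ indexed by the $3$-noncrossing matchings $\TNC_n$, using the double-grove combinatorics developed elsewhere in the paper.  This matches the $d=2$ row of Figure~\ref{fig:analogy} and simultaneously gives property~(2) in degree $2$, since $B_\xi$ is constructed combinatorially in terms of double groves and so is manifestly nonnegative on $\X_{n,\geq 0}$.

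For general $d$, property (3), positive expansion of monomials in $L_\sigma$ into the $E_\P$, would be approached by the same structural argument used for $H(\T)$: a monomial $L_{\sigma_1} \cdots L_{\sigma_d}$ is the pullback of $\Delta_{I_1}\cdots \Delta_{I_d}$, whose expansion into $H(\T)$'s is known to be positive \cite{lam2019cyclic}; one then checks that the projection to $G_{d,n}$ preserves this positivity, i.e.\ that the ``corrections'' coming from the grove relations $r_{P,Q}$ of \cref{def:noncomparable_relation} can be absorbed into a positive combination of $E_\P$'s.  Property (4) would be proved by constructing a promotion-like operation on $\cC^{(d)}_n$ via the natural $\Z/2n\Z$-action on noncrossing partitions, and then showing compatibility with the cyclic action on $\X_n$ inherited from the $\LG$ structure; once the definition of $E_\P$ is via restriction of $H(\T)$, the corresponding property for $H(\T)$ from \cref{thm:lam19_canonical_basis}(4) transfers directly, provided the bijection $\P \leftrightarrow \T(\P)$ intertwines the two promotions.

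The main obstacle is property (2): nonnegativity of $E_\P$ on $\X_{n,\geq 0}$ for $d \geq 3$.  The restriction-from-$\Gr(n-1,2n)$ approach does not automatically give this, because the totally nonnegative part $\X_{n,\geq 0}$ is \emph{smaller} than the intersection of $\X_n$ with $\Gr(n-1,2n)_{\geq 0}$ in the natural embedding, so nonnegativity of $H(\T)$ on $\Gr(n-1,2n)_{\geq 0}$ is a priori stronger than what is needed but the restriction argument loses control of signs after quotienting.  The most promising route is to give an intrinsic combinatorial formula for $E_\P$ in terms of \emph{$d$-fold groves}, generalizing the double-grove formula for the bush basis; such a formula, if it exists, would make nonnegativity on $\X_{n,\geq 0}$ manifest by counting combinatorial objects with positive weights, and would simultaneously furnish the positive monomial expansion in property (3) via an insertion/RSK-type algorithm analogous to the one that produces standard monomials in part~(2) of the previous theorem.
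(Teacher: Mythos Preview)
The statement you are attempting to prove is a \emph{conjecture}: the paper does not prove it, and explicitly presents it as open for $d\geq 3$.  The only case established in the paper is $d=2$, where the bush basis $B_\xi$ is constructed directly from double groves (\cref{def:B-basis}), shown to be a basis (\cref{prop:B-basis}), shown to expand $L_\sigma L_{\sigma'}$ positively (\cref{thm:L-into-B}), and is visibly cyclically invariant from its definition.  There is therefore no ``paper's own proof'' to compare your proposal against; rather, your proposal is an outline of an attack on an open problem.

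That said, your outline contains concrete errors that would need to be fixed before it could become a viable approach.  First, the claim ``$L_P=\Delta_{I(P)}$'' is false: by \cref{eq:delta-to-L} one has $\Delta_I=\sum_{\sigma\in\cE(I)}L_\sigma$, and \cref{lem:P-minimal-in-delta} only says $L_{P(I)}$ is the \emph{leading term} of $\Delta_I$, not that they are equal.  So already for $d=1$ the restriction of $H(\T)$ does not produce $L_P$, and property~(1) fails for the naive restriction construction.  Second, your ``main obstacle'' is misstated: in the paper $\X_{n,\geq 0}$ is \emph{defined} as $\X_n\cap\Gr(n-1,2n)_{\geq 0}$, so nonnegativity of $H(\T)$ on $\Gr(n-1,2n)_{\geq 0}$ \emph{does} automatically give nonnegativity of its restriction on $\X_{n,\geq 0}$.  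The genuine obstacle with the restriction approach is elsewhere: the images of the $H(\T)$ in $G_{d,n}$ need not be linearly independent, need not be indexed naturally by $\cC^{(d)}_n$ rather than by semistandard tableaux, and there is no reason the surjection $R(n-1,2n)_d\twoheadrightarrow G_{d,n}$ should send dual canonical basis elements to distinguished elements rather than to arbitrary linear combinations.  Your final paragraph, proposing an intrinsic $d$-fold grove formula, is much closer in spirit to what the paper actually does for $d=2$ and is the direction the paper implicitly suggests; but no such construction is known, which is precisely why the statement remains a conjecture.
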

The action of $\chi$ on $\cC^{(d)}_n$ is an electrical analogue of promotion on tableaux. For $d=1$, it corresponds to rotation on noncrossing matchings, under the bijection between Dyck paths and noncrossing matchings (\cref{sec:catalan}).

Lusztig has defined a totally nonnegative part $\LG(n-1,2n-2)_{\geq 0}$ of $\LG(n-1,2n-2)$; see \cite{Karpman}.  The spaces $\X_{n,\geq 0}$ and $\LG(n-1,2n-2)_{\geq 0}$ are both ``totally positive" versions of the Lagrangian Grassmannian.  However, they are quite different.  For example, they are cell complexes with a different number of cells.  We expect the analogy between $\LG(n-1,2n-2)_{\geq 0}$ and $\X_{n,\geq 0}$ to parallel the analogy between Lusztig's dual canonical basis and the electrical canonical basis.

In \cite{lam2004electroid}, a stratification of $\X_n$ by electroid varieties $\X_\tau$ is constructed, indexed by matchings $\tau$ on $2n$ points. We conjecture the electrical canonical basis to have the following properties with respect to $\X_\tau$.

\begin{conjecture}\
\begin{enumerate}
    \item For a matching $\tau$, the electrical canonical basis elements $E_\P$ that do not restrict to 0 on $\X_\tau$ form a basis of the homogeneous coordinate ring $\C[\X_\tau]$.
    \item For a matching $\tau$, if $E_\P$ is not identically 0 on $\X_\tau$, then it takes strictly positive values on $\X_{\tau,>0} = \X_\tau \cap \X_{n,\geq 0}$.
\end{enumerate}
\end{conjecture}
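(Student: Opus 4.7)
The plan is to treat this conjecture in tandem with the preceding conjecture asserting the existence of the electrical canonical basis $E_\P$, since both statements are intrinsically linked. The strategy mirrors the Grassmannian story for positroid varieties $\Pi_f$ developed in \cite{lam2019cyclic}: describe the electrical canonical basis through a unitriangular transition from the standard grove basis $\{L_\P\}$, exhibit a combinatorial indexing set $S_\tau \subset \cC^{(d)}_n$ for those $\P$ whose $E_\P$ does not restrict to $0$ on $\X_\tau$, and use positivity on $\X_{n,\geq 0}$ together with the geometry of $\X_{\tau,>0}$ to extract strict positivity.

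For part (1), I would first establish the expected stratified analogue of \cref{thm:tableaux-basis}: the homogeneous coordinate ring $\C[\X_\tau]_d$ has a standard monomial basis $\{L_\P : \P \in S_\tau\}$, where $S_\tau$ consists of nested tuples all of whose components $P_i$ correspond to noncrossing partitions compatible with $\tau$ in the sense of \cite{lam2004electroid}. Concretely, this reduces to showing that the defining ideal of $\X_\tau$ inside $\X_n$ is generated by $\II_n$ together with the vanishing grove coordinates $\{L_\sigma : \sigma \text{ incompatible with } \tau\}$, which one can attack by intersecting the quadratic Gr\"obner basis of $\II_n$ with the coordinate subspace spanned by $\tau$-compatible grove coordinates, then checking that this remains a Gr\"obner basis for the enlarged ideal. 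Granted this, the unitriangular expansion of $E_\P$ in terms of $L_\P$ (property (3) of the preceding conjecture) transports the standard monomial basis on $\X_\tau$ to the restriction of the electrical canonical basis, and elements $E_\P$ with $\P \notin S_\tau$ automatically vanish on $\X_\tau$ because every standard monomial appearing in their expansion does.

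For part (2), the nonnegativity of $E_\P$ on $\X_{n,\geq 0}$ forces nonnegativity on the closure $\X_{\tau,\geq 0} = \overline{\X_{\tau,>0}}$. To promote this to strict positivity on $\X_{\tau,>0}$, I would use that $\X_{\tau,>0}$ is a connected semialgebraic cell in the cactus network compactification \cite{lam2004electroid}: a regular function that is nonnegative on such a cell and not identically zero can vanish only on a proper Zariski-closed subset, hence $E_\P|_{\X_{\tau,>0}}$ is strictly positive on a dense open. Upgrading to everywhere-positivity would then proceed by induction on the closure order on matchings $\tau$, using the cyclic symmetry $\chi$ to reduce to base cases and, in the style of \cite{lam2019cyclic}, propagating strict positivity from higher-dimensional cells to their boundaries via suitable limits of totally positive points.

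The principal obstacle is that the electrical canonical basis $E_\P$ has not yet been constructed; the present conjecture is genuinely conditional on the previous one. Even granted existence, the sharpest technical difficulty is establishing the compatibility of $\{E_\P\}$ with the electroid stratification at the ideal-theoretic level, i.e.\ controlling precisely which standard monomials occur in the expansion of $E_\P$ relative to the vanishing loci of the $L_\sigma$. In the Grassmannian case this compatibility rests on quantum group and cluster-algebraic structures that have no known electrical counterpart, so a substantial new input --- perhaps via the isomorphism $\X_n \cong \LG(n-1,2n-2)$ of \cite{chepuri2021electrical,bychkov2021electrical} and the symplectic dual canonical basis --- will likely be required.
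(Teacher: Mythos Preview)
The statement you are addressing is a \emph{conjecture} in the paper, not a theorem; the paper offers no proof and does not claim one. There is therefore nothing to compare your proposal against. Your write-up is not a proof but a research outline, and you correctly flag this yourself when you note that ``the electrical canonical basis $E_\P$ has not yet been constructed; the present conjecture is genuinely conditional on the previous one.''

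Viewed as a plan of attack rather than a proof, your outline is reasonable in spirit but contains several soft spots. For part~(1), you assert that $E_\P$ with $\P \notin S_\tau$ ``automatically vanish on $\X_\tau$ because every standard monomial appearing in their expansion does''; but unitriangularity of $E_\P$ in the $L_\P$ basis does not by itself guarantee that the support of $E_\P$ lies entirely within monomials incompatible with~$\tau$ --- that is precisely the hard compatibility statement you later concede is unresolved. For part~(2), the argument that a nonnegative regular function on a connected cell is strictly positive off a proper Zariski-closed set does not upgrade to everywhere-positivity without further input: a nonnegative polynomial can vanish on a real-codimension-one locus inside a cell (think $x^2$ on $\mathbb{R}$). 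The induction on the closure order you sketch would need to show that vanishing on $\X_{\tau,>0}$ forces vanishing on all of $\X_\tau$, which is the reverse direction from what boundary limits give you.

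In short: the paper leaves this open, and your proposal is an honest sketch of how one might proceed, with its principal gaps correctly identified in your own final paragraph.
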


\subsection{Electrical canonical basis in degree two}
Using the combinatorics of electrical networks, we construct the electrical canonical basis in degree two, which we call the \emph{Bush basis}. As a consequence, we discover some remarkable combinatorial properties of \emph{double groves}.

There is a bijection between the set $\cC^{(2)}_n$ of pairs of nested Dyck paths of semilength $2n$, and the set of $3$-noncrossing matchings on $2n$ points, defined in \cref{sec:combinatorics-3noncrossing}.  We construct the Bush basis $B_\xi$ of $G_{2,n}$ labeled by $3$-noncrossing matchings $\xi$, defined by (\cref{def:B-basis})
$$
B_\xi(\Gamma):= \sum_{H \subset 2\Gamma} \alpha(H)_\xi \wt(H),
$$
where the summation is over double groves in $\Gamma$, and $\alpha(H)_\xi$ are certain nonnegative integers.  This definition is compatible with the natural rotation action on 3-noncrossing matchings.  In \cref{thm:L-into-B} we give a positive expansion of the quadratic monomials $L_\sigma L_{\sigma'}$ into the Bush basis $B_\xi$.

The relation between $L_\sigma$ and $\Delta_I$ is more than an analogy.  In \cite{lam2004electroid}, it is shown that the embedding $\iota:\X_n \hookrightarrow \Gr(n-1,2n)$ is given by the formula
$$
\iota^*(\Delta_I) = \sum_I M_{I\sigma} L_\sigma
$$
where $M_{I \sigma}$ is the concordance matrix defined in \cite{lam2004electroid} which we recall in \cref{sec:geometry}.  Likewise, we give in \cref{sec:F-basis} a positive formula expanding the Temperley-Lieb immanants $F_{\tau,T}$ in terms of the Bush basis $B_\xi$. This produces a commutative square of positive expansions:
\[\begin{tikzcd}[column sep=huge]
\Delta_I\Delta_J \arrow[r,"\text{Theorem~}\ref{thm:delta-to-F}"] \arrow[d,"\text{concordance}"] & F_{\tau,T} \arrow[d,"\text{Definition~}\ref{def:F_tau_T}"]  \\
L_{\sigma}L_{\sigma'} \arrow[r,"\text{Theorem~}\ref{thm:L-into-B}"] & B_{\xi}
\end{tikzcd}.\]

It would also be interesting to carry out our program in the setting of the Ising model and orthogonal Grassmannian \cite{galashin-pylyavskyy}.

\section{Background}\label{sec:background}
In this section, we introduce relevant background, following conventions in \cite{lam2004electroid}.
\subsection{Electrical networks and cactus networks}
A \emph{circular planar electrical network}, is a finite weighted undirected graph $\Gamma$ embedded into a disk, with boundary vertices labeled $\bar1,\bar2,\ldots,\bar n$ in clockwise order. From now on, we will simply refer to circular planar electrical networks as \emph{electrical networks}.

\emph{Cactus networks} are introduced by the second author \cite{lam2004electroid} in order to study the compactification of the space of electrical networks. Let $S$ be a circle with boundary vertices $\bar1,\ldots,\bar n$, and let $\zeta$ be a noncrossing partition of $[\bar n]$. Identifying the boundary vertices according to the parts of $\zeta$ gives a \emph{hollow cactus} $S_{\zeta}$, which can be viewed as a union of circles glued at some points. A \emph{cactus} is $S_{\zeta}$ together with its interior and a \emph{cactus network} $\Gamma$ is a weighted graph embedded into a cactus, which can be intuitively thought of as an electrical network where boundary vertices are identified in a manner given by $\zeta$. In particular, an electrical network is a cactus network with $\zeta=(\bar1|\bar2|\cdots|\bar{n})$. See \cref{fig:cactus-example} for an example.

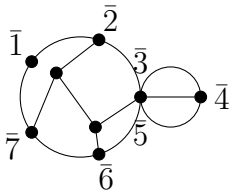
\begin{figure}[h!]
\centering
\begin{tikzpicture}[scale=0.4]
\draw (0,0) circle (2);
\node at (69:2.8) {$\bar2$};
\node at (140:2.8) {$\bar1$};
\node at (216:2.8) {$\bar7$};
\node at (288:2.8) {$\bar6$};
\node at (2,1.2) {$\bar3$};
\node at (2,-1.2) {$\bar5$};
\draw (3,0) circle (1);
\node at (4.7,0) {$\bar4$};
\node[circle, draw=black, fill=black, minimum size=1, scale=0.4,] (35) at (0:2) {};
\node[circle, draw=black, fill=black, minimum size=1, scale=0.4,] (2) at (72:2) {};
\node[circle, draw=black, fill=black, minimum size=1, scale=0.4,] (1) at (144:2) {};
\node[circle, draw=black, fill=black, minimum size=1, scale=0.4,] (7) at (216:2) {};
\node[circle, draw=black, fill=black, minimum size=1, scale=0.4,] (6) at (288:2) {};
\node[circle, draw=black, fill=black, minimum size=1, scale=0.4,] (4) at (4,0) {};
\node[circle, draw=black, fill=black, minimum size=1, scale=0.4,] (x) at (-0.8,0.8) {};
\node[circle, draw=black, fill=black, minimum size=1, scale=0.4,] (y) at (0.5,-1) {};
\draw (2) -- (x) -- (7);
\draw (x) -- (y) -- (6);
\draw (y) -- (35) -- (4);
\end{tikzpicture}
\caption{An example of a cactus network for $\zeta=(\bar1|\bar2|\bar3,\bar5|\bar4|\bar6|\bar7)$}
\label{fig:cactus-example}
\end{figure}

We can define a weight function $\wt$ on the edges in a network $\Gamma$ where we typically treat each value $\wt(e)$ as an indeterminate. For a subgraph $F\subset\Gamma$, define \[\wt(F)=\prod_{e\in F}\wt(e).\] 
We call the underlying unweighted graph of a cactus network a \emph{cactus graph}.

\subsection{Groves}
A \emph{grove} $F$ on a cactus network $\Gamma$ is a spanning forest, such that each component is connected to the boundary. The \emph{boundary partition} $\sigma(F)$ is a set partition of $[\bar n]$ that records which boundary vertices are in the same connected components of $F$. Note that since our network is planar, $\sigma(F)$ must be a noncrossing partition of $[\bar n]$ that coarsens $\zeta$, if $\Gamma$ lies in the cactus $S_{\zeta}$. 
For a noncrossing partition $\sigma$, we define the \emph{grove measurement} as
\[L_{\sigma}(\Gamma):=\sum_{F\:|\: \sigma(F)=\sigma}\wt(F)\]
where the summation is over all groves with boundary partition $\sigma$. 

\subsection{Medial graph and medial pairing}\label{sub:medial}
Given a cactus network $\Gamma$, we define its \emph{medial graph} $G(\Gamma)$ as follows. The vertices of $G(\Gamma)$ consist of vertices $t_1,t_2,\ldots,t_{2n}$ on the boundary in clockwise order such that the orginal boundary vertex $\bar i$ lies between $t_{2i-1}$ and $t_{2i}$, and vertices $t_e$'s for each edge $e$ in the network. The edges of $G(\Gamma)$ are joined as follows: we first join $t_e$ with $t_{e'}$ if $e$ and $e'$ share a vertex of $\Gamma$ and are incident to the same face. Then for each boundary vertex $\bar i$ of $\Gamma$, let the edges incident to $\bar i$ be $e_1,\ldots,e_k$ in counterclockwise order, i.e. $t_{2i-1}$ is closest to $e_1$ while $t_{2i}$ is closest to $e_{k}$, and join $t_{2i-1}$ with $t_{e_1}$ and $t_{2i}$ with $t_{e_k}$; in the case of $k=0$, i.e. there are no edges incident to $\bar i$ in $\Gamma$, we join $t_{2i-1}$ with $t_{2i}$ instead. Note that in $G(\Gamma)$, each $t_e$ has degree $4$ for $e\in E(\Gamma)$, and each $t_i$ has degree $1$ for $i\in[2n]$.

We then define the \emph{medial pairing} $\tau(\Gamma)$ on a medial graph $G(\Gamma)$. From a boundary vertex $t_i$, which has degree $1$ of $G(\Gamma)$, we trace through edges such that whenever we encounter a degree $4$ vertex $t_e$, we go straight through, ending at another boundary vertex $t_j$. This procedure forms $n$ \emph{strands} or \emph{wires} which naturally results in a matching $\tau(\Gamma)$ of $[2n]$. See \cref{fig:medial-example-01} for an example.
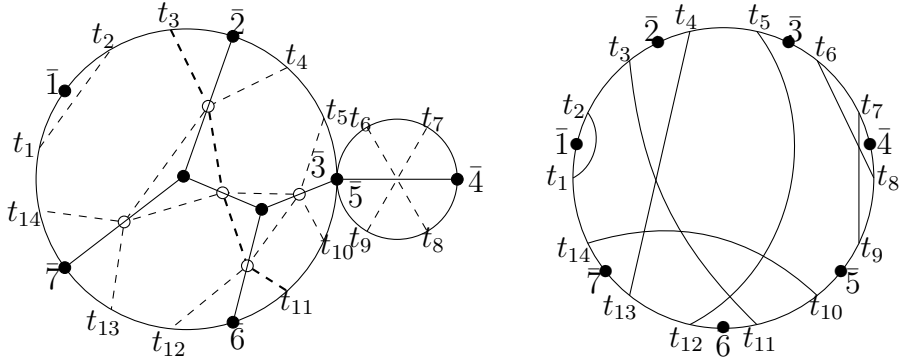
\begin{figure}[h!]
\centering
\begin{tikzpicture}[scale=0.4]
\def\r{5};
\def\rr{2};
\def\xa{-0.1};
\def\ya{0.1};
\def\xb{2.5};
\def\yb{-1.0};
\def\h{0.5};
\draw (0,0) circle (\r);
\draw (\r+\rr,0) circle (\rr);
\node[circle, draw=black, fill=black, minimum size=1, scale=0.4,] (35) at (\r,0) {};
\node[circle, draw=black, fill=black, minimum size=1, scale=0.4,] (2) at (\r*0.30901699437,\r*0.95105651629) {};
\node[circle, draw=black, fill=black, minimum size=1, scale=0.4,] (1) at (-\r*0.80901699437,\r*0.58778525229) {};
\node[circle, draw=black, fill=black, minimum size=1, scale=0.4,] (7) at (-\r*0.80901699437,-0.58778525229*\r) {};
\node[circle, draw=black, fill=black, minimum size=1, scale=0.4,] (6) at (0.30901699437*\r,-0.95105651629*\r) {};
\node[circle, draw=black, fill=black, minimum size=1, scale=0.4,] (4) at (\r+2*\rr,0) {};

\coordinate (t5) at (24:\r) {};
\node at (24:\r+\h) {$t_5$};
\coordinate (t4) at (48:\r) {};
\node at (48:\r+\h) {$t_4$};
\node at (72:\r+\h) {$\bar2$};
\coordinate (t3) at (96:\r) {};
\node at (96:\r+\h) {$t_3$};
\coordinate (t2) at (120:\r) {};
\node at (120:\r+\h) {$t_2$};
\node at (144:\r+\h) {$\bar1$};
\coordinate (t1) at (168:\r) {};
\node at (168:\r+\h) {$t_1$};
\coordinate (t14) at (192:\r) {};
\node at (192:\r+\h) {$t_{14}$};
\node at (216:\r+\h) {$\bar7$};
\coordinate (t13) at (240:\r) {};
\node at (240:\r+\h) {$t_{13}$};
\coordinate (t12) at (264:\r) {};
\node at (264:\r+\h) {$t_{12}$};
\node at (288:\r+\h) {$\bar6$};
\coordinate (t11) at (312:\r) {};
\node at (312:\r+\h) {$t_{11}$};
\coordinate (t10) at (336:\r) {};
\node at (336:\r+\h) {$t_{10}$};

\node[left] at (\r,\h) {$\bar3$};
\node[right] at (\r,-\h) {$\bar5$};
\node at (\r+0.4*\rr,\rr) {$t_6$};
\node at (\r+0.4*\rr,-\rr) {$t_9$};
\node[right] at (\r+2*\rr,0) {$\bar4$};
\node at (\r+1.6*\rr,\rr) {$t_7$};
\node at (\r+1.6*\rr,-\rr) {$t_8$};

\node[circle, draw=black, fill=black, minimum size=1, scale=0.4,] (a) at (\xa,\ya) {};
\node[circle, draw=black, fill=black, minimum size=1, scale=0.4,] (b) at (\xb,\yb) {};

\node[circle, draw=black, fill=white, minimum size=1, scale=0.4,] (te1) at (\r*0.30901699437/2+\xa/2,\r*0.95105651629/2+\ya/2) {};
\node[circle, draw=black, fill=white, minimum size=1, scale=0.4,] (te2) at (-\r*0.80901699437/2+\xa/2,-0.58778525229*\r/2+\ya/2) {};
\node[circle, draw=black, fill=white, minimum size=1, scale=0.4,] (te3) at (\xa/2+\xb/2,\ya/2+\yb/2) {};
\node[circle, draw=black, fill=white, minimum size=1, scale=0.4,] (te4) at (\r/2+\xb/2,\yb/2) {};
\node[circle, draw=black, fill=white, minimum size=1, scale=0.4,] (te5) at (0.30901699437*\r/2+\xb/2,-0.95105651629*\r/2+\yb/2) {};

\draw(2)--(a)--(7);
\draw(a)--(b)--(6);
\draw(b)--(35)--(4);

\draw[dashed](t1)--(t2);
\draw[dashed,thick](t3)--(te1)--(te3)--(te5)--(t11);
\draw[dashed](t4)--(te1)--(te2)--(t13);
\draw[dashed](t5)--(te4)--(te5)--(t12);
\draw[dashed](t10)--(te4)--(te3)--(te2)--(t14);
\draw[dashed](\r+0.5*\rr,0.86602540378*\rr)--(\r+1.5*\rr,-0.86602540378*\rr);
\draw[dashed](\r+0.5*\rr,-0.86602540378*\rr)--(\r+1.5*\rr,0.86602540378*\rr);
\end{tikzpicture}
\quad
\begin{tikzpicture}[scale=0.4]
\def\r{5};
\def\h{0.5};
\draw (0,0) circle (\r);
\coordinate (t8) at (0:\r) {};
\coordinate (t7) at (360/14:\r) {};
\coordinate (t6) at (2*360/14:\r) {};
\coordinate (t5) at (3*360/14:\r) {};
\coordinate (t4) at (4*360/14:\r) {};
\coordinate (t3) at (5*360/14:\r) {};
\coordinate (t2) at (6*360/14:\r) {};
\coordinate (t1) at (7*360/14:\r) {};
\coordinate (t14) at (8*360/14:\r) {};
\coordinate (t13) at (9*360/14:\r) {};
\coordinate (t12) at (10*360/14:\r) {};
\coordinate (t11) at (11*360/14:\r) {};
\coordinate (t10) at (12*360/14:\r) {};
\coordinate (t9) at (13*360/14:\r) {};
\node at (0.5*360/14:\r) {$\bullet$};
\node at (2.5*360/14:\r) {$\bullet$};
\node at (4.5*360/14:\r) {$\bullet$};
\node at (6.5*360/14:\r) {$\bullet$};
\node at (8.5*360/14:\r) {$\bullet$};
\node at (10.5*360/14:\r) {$\bullet$};
\node at (12.5*360/14:\r) {$\bullet$};
\node at (0.5*360/14:\r+\h) {$\bar4$};
\node at (2.5*360/14:\r+\h) {$\bar3$};
\node at (4.5*360/14:\r+\h) {$\bar2$};
\node at (6.5*360/14:\r+\h) {$\bar1$};
\node at (8.5*360/14:\r+\h) {$\bar7$};
\node at (10.5*360/14:\r+\h) {$\bar6$};
\node at (12.5*360/14:\r+\h) {$\bar5$};
\node at (0:\r+\h) {$t_8$};
\node at (360/14:\r+\h) {$t_7$};
\node at (2*360/14:\r+\h) {$t_6$};
\node at (3*360/14:\r+\h) {$t_5$};
\node at (4*360/14:\r+\h) {$t_4$};
\node at (5*360/14:\r+\h) {$t_3$};
\node at (6*360/14:\r+\h) {$t_2$};
\node at (7*360/14:\r+\h) {$t_1$};
\node at (8*360/14:\r+\h) {$t_{14}$};
\node at (9*360/14:\r+\h) {$t_{13}$};
\node at (10*360/14:\r+\h) {$t_{12}$};
\node at (11*360/14:\r+\h) {$t_{11}$};
\node at (12*360/14:\r+\h) {$t_{10}$};
\node at (13*360/14:\r+\h) {$t_9$};
\draw[bend right=50] (t1) to (t2);
\draw[bend right=20] (t3) to (t11);
\draw(t4)--(t13);
\draw[bend left=50] (t5) to (t12);
\draw(t6)--(t8);
\draw(t7)--(t9);
\draw[bend right=30] (t10) to (t14);
\end{tikzpicture}
\caption{A cactus network $\Gamma$ with its medial graph $G(\Gamma)$ (left) and medial pairing $\tau(\Gamma)=\{(1,2),(3,11),(4,13),(5,12),(6,8),(7,9),(10,14)\}$ (right)}
\label{fig:medial-example-01}
\end{figure}

For later sections, it might be more convenient to think about medial graphs and medial pairings to be on a circle instead of on a cactus. To reconstruct $\Gamma$ from its medial graph $G(\Gamma)$, either drawn on a cactus or drawn on a circle, we first place a boundary vertex $\bar i$ between $t_{2i-1}$ and $t_{2i}$ for $i\in[n]$. Note that the graph $G$ divides the interior of the disk or the cactus into regions. We color the regions with black and white so that adjacent regions have different colors, and that the regions with boundary vertices $\bar i$'s are colored white. Contract the boundary vertices inside the same regions to form a cactus. Place a vertex in each white interior region and connect two vertices if the two corresponding regions in $G$ share a vertex. 

A medial graph is \emph{lensless} if no wire has a self intersection, and no two wires cross twice, i.e. there are no lens that look like \begin{tikzpicture}[scale=0.5]
\draw[bend right=50](-1,0.2)to(1,0.2);\draw[bend left=50](-1,-0.2)to(1,-0.2);
\end{tikzpicture}. A cactus network $\Gamma$ is \emph{critical}, or \emph{reduced}, if its medial graph is lensless. In this case, the number of crossings in the medial pairing $\tau(\Gamma)$ equals the number of edges in $\Gamma$. We note that for any matching $\tau$ on $[2n]$, there exists a reduced cactus network $\Gamma$ such that $\tau(\Gamma)=\tau$, by drawing $\tau$ in a reduced way on a disk, viewing it as a medial graph and applying the procedure above. More specifically, we recall the following \cref{prop:Y-delta} from \cite{lam2004electroid}.

\begin{prop}[Proposition 2.12 of \cite{lam2004electroid}]\label{prop:Y-delta}
We have:
\begin{enumerate}
\item any matching on $[2n]$ can be realized as a medial pairing of some reduced cactus graph;
\item any two reduced cactus graphs with the same medial pairing can be obtained from each other via $Y-\Delta$ moves.
\end{enumerate}
\end{prop}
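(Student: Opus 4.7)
The plan is to translate both statements into the combinatorics of \emph{wire diagrams} in the disk via the medial graph construction: the reconstruction procedure in \cref{sub:medial} sets up a bijection between cactus graphs with $n$ boundary vertices and medial graphs on $[2n]$, under which reduced cactus graphs correspond to lensless wire diagrams and, as one checks by drawing a small local picture, $Y$--$\Delta$ moves on cactus graphs correspond to Reidemeister III (``braid'') moves on medial graphs. Both parts then reduce to statements purely about lensless wire diagrams.

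For part (1), given a matching $\tau$ on $[2n]$, first draw $n$ smooth chords inside the disk connecting matched boundary points in general position, so that all intersections are transverse double points. A parity argument shows that two chords $\{a,b\}$ and $\{c,d\}$ with $a<c<b<d$ must cross an odd number of times, while non-alternating pairs of chords must cross an even number of times; hence any smooth realization of $\tau$ has at least $\CR(\tau)$ crossings, where $\CR(\tau)$ is the number of alternating pairs in $\tau$. While the diagram contains a lens (two wires crossing twice and bounding a bigon) or a wire self-intersection, choose an innermost such feature and apply the corresponding Reidemeister II or Reidemeister I move to strictly decrease the crossing count. Since the crossing count is bounded below by $\CR(\tau)$, the process terminates, yielding a lensless medial graph; the reconstruction procedure of \cref{sub:medial} then produces a reduced cactus graph $\Gamma$ with $\tau(\Gamma)=\tau$.

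For part (2), I would prove the stronger statement that any two lensless wire diagrams on the disk realizing the same matching $\tau$ are related by a sequence of Reidemeister III moves, by induction on $\CR(\tau)$. The base case $\CR(\tau)=0$ gives a noncrossing matching and the lensless realization is unique up to isotopy. For the inductive step, fix the wire $w$ ending at $t_1$ and let its other endpoint be $t_k$. The key claim is that in any lensless realization $D$ of $\tau$, the wire $w$ can be moved, by Reidemeister III moves, into a canonical position relative to the other $n-1$ wires, for instance the position hugging the $t_1$-to-$t_k$ boundary arc as closely as possible. Once $w$ is in this canonical position in both diagrams, remove it and invoke the inductive hypothesis on the remaining lensless diagram with $n-1$ wires, whose $\CR$-count is strictly smaller. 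Translating the resulting sequence of Reidemeister III moves back via the bijection yields the desired sequence of $Y$--$\Delta$ moves between $\Gamma_1$ and $\Gamma_2$.

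The main obstacle is the key claim in part (2) that, up to Reidemeister III moves, a chosen wire can be canonicalized in any lensless diagram. This is the wire-diagram analogue of Matsumoto's theorem for reduced words in $S_n$, and requires a careful exchange-style argument: given two lensless placements of $w$ among the other wires, identify an innermost triangular face involving $w$ that obstructs pushing $w$ toward the boundary, show that the unique Reidemeister III move across that triangle preserves lenslessness, and verify that repeatedly applying such moves terminates. Carrying this out rigorously is the most delicate step; it is often subsumed into general results on reduced tangles or pseudoline arrangements, but a self-contained proof requires inspecting a handful of local configurations to confirm that no lens is ever created during the canonicalization.
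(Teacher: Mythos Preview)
The paper does not prove this proposition; it is quoted as Proposition~2.12 of \cite{lam2004electroid} and used as a black box, so there is no proof here to compare your proposal against.

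Your sketch follows the standard route for results of this type: pass to lensless wire diagrams via the medial-graph bijection, handle part~(1) by lens reduction (Reidemeister I/II), and handle part~(2) by a Matsumoto-style argument showing that any two lensless realizations of the same matching are connected by Reidemeister~III moves. You have correctly identified the one genuinely nontrivial step, namely the canonicalization of a chosen wire in part~(2), and your description of how to carry it out (push across innermost triangles, check that lenslessness is preserved) is the right shape. This is essentially the argument given in \cite{lam2004electroid} and, in closely related forms, in \cite{Curtis-Ingerman-Morrow} and \cite{deVerdiere-Gitler-Vertigan}; any of these would serve as a reference for the details you flag as delicate.
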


Given a cactus network $\Gamma$ with its medial graph $G(\Gamma)$, we define its \emph{dual network} $\Gamma^\vee$ as follows: we keep only the information of $G(\Gamma)$, put $\bar i$ between $t_{2i}$ and $t_{2i+1}$ in clockwise order where the indices are taken modulo $2n$, and recover a cactus network $\Gamma^{\vee}$ via the procedure described above, called the \emph{dual network} of $\Gamma$. See \cref{fig:dual-network-ex1} for an example. If we take the dual twice, $(\Gamma^{\vee})^{\vee}$ will be $\Gamma$ with indices shifted by $1$ in the clockwise order.
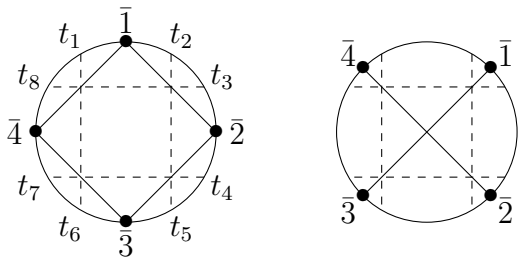
\begin{figure}[h!]
\centering
\begin{tikzpicture}[scale=0.4]
\def\r{3};
\def\x{10};
\draw (0,0) circle (\r);
\node at (\r,0) {$\bullet$};
\node at (-\r,0) {$\bullet$};
\node at (0,\r) {$\bullet$};
\node at (0,-\r) {$\bullet$};
\node at (\r+0.7,0) {$\bar2$};
\node at (-\r-0.7,0) {$\bar4$};
\node at (0,\r+0.7) {$\bar1$};
\node at (0,-\r-0.7) {$\bar3$};
\node at (30:\r+0.7) {$t_3$};
\node at (60:\r+0.7) {$t_2$};
\node at (120:\r+0.7) {$t_1$};
\node at (150:\r+0.7) {$t_8$};
\node at (210:\r+0.7) {$t_7$};
\node at (240:\r+0.7) {$t_6$};
\node at (300:\r+0.7) {$t_5$};
\node at (330:\r+0.7) {$t_4$};
\draw(\r,0)--(0,\r)--(-\r,0)--(0,-\r)--(\r,0);
\draw[dashed](30:\r)--(150:\r);
\draw[dashed](330:\r)--(210:\r);
\draw[dashed](60:\r)--(300:\r);
\draw[dashed](120:\r)--(240:\r);

\draw (\x,0) circle (\r);
\node at (\x+1.41421356237*\r/2,1.41421356237*\r/2) {$\bullet$};
\node at (\x+1.41421356237*\r/2,-1.41421356237*\r/2) {$\bullet$};
\node at (\x-1.41421356237*\r/2,1.41421356237*\r/2) {$\bullet$};
\node at (\x-1.41421356237*\r/2,-1.41421356237*\r/2) {$\bullet$};
\node at (\x+0.70710678118*3.7,0.70710678118*3.7) {$\bar1$};
\node at (\x+0.70710678118*3.7,-0.70710678118*3.7) {$\bar2$};
\node at (\x-0.70710678118*3.7,0.70710678118*3.7) {$\bar4$};
\node at (\x-0.70710678118*3.7,-0.70710678118*3.7) {$\bar3$};
\draw(\x+1.41421356237*\r/2,1.41421356237*\r/2)--(\x-1.41421356237*\r/2,-1.41421356237*\r/2);
\draw(\x+1.41421356237*\r/2,-1.41421356237*\r/2)--(\x-1.41421356237*\r/2,1.41421356237*\r/2);
\draw[dashed](\x-0.5*\r,0.86602540378*\r)--(\x-0.5*\r,-0.86602540378*\r);
\draw[dashed](\x+0.5*\r,0.86602540378*\r)--(\x+0.5*\r,-0.86602540378*\r);
\draw[dashed](\x+0.86602540378*\r,0.5*\r)--(\x-0.86602540378*\r,0.5*\r);
\draw[dashed](\x+0.86602540378*\r,-0.5*\r)--(\x-0.86602540378*\r,-0.5*\r);
\end{tikzpicture}
\caption{A network $\Gamma$ and its dual $\Gamma^{\vee}$}
\label{fig:dual-network-ex1}
\end{figure}

The dual graphs $\Gamma$ and $\Gamma^{\vee}$ have the same number of edges.  Furthermore, each edge of $\Gamma$ intersects a unique edge of $\Gamma^\vee$ (and vice-versa), inducing a natural bijection between the edges of $\Gamma$ and of $\Gamma^\vee$. Each pair of intersecting edges corresponds to an intersection in the medial graph $G(\Gamma)$. 

\subsection{Catalan objects and their bijections}\label{sec:catalan}
We need to use all of the following objects: Dyck paths, noncrossing matchings and noncrossing partitions, all of which are enumerated by the Catalan numbers $C_n$. For easier reference, we summarize the notations we use in the following:
\begin{itemize}
    \item[-]  $\mathcal{C}_n$:= the set of Dyck paths of semilength $n$.
    \item[-]  $\MM_n$:= the set of matchings on $1,2,\ldots,2n$.
    \item[-]  $\NCM_n$:= the set of noncrossing matchings on $1,2,\ldots,2n$.
    \item[-]  $\TNC_n$:= the set of $3$-noncrossing matchings on $1,2,\ldots,2n$.
    \item[-]  $\NP_n$:= the set of noncrossing partitions of $[\bar{n}] = \{\bar{1}, \bar{2},\dots, \bar{n}\}$.
\end{itemize}
Now we start our discussion on these Catalan objects.
\begin{defin}
A \textit{Dyck path} of semilength $n$ is a walk from $(0,0)$ to $(2n,0)$ consisting of $n$ \textit{upsteps} $(1,1)$ and $n$ \textit{downsteps} $(1,-1)$ that stays above the $x$-axis. 
\end{defin}
We denote the set of Dyck paths of semilength $n$ by $\mathcal{C}_n$. There is a natural poset structure on Dyck paths: for two Dyck paths of semilength $n$, we say that $P\leq Q$ if the path $P$ stays weakly below $Q$. We can also represent a Dyck path $P$ by a vector $v(P)\in\{0,1\}^{2n}$ such that $v(P)_j=1$ if the $j$th step of $P$ is an upstep and $v(P)_j=0$ if the $j$th step of $P$ is a downstep. Then there are always as many $1$'s as $0$'s within $v(P)_1,\ldots,v(P)_j$ for all $j=1,\ldots,2n$. We write $P\preccurlyeq Q$ if $v(P)$ is lexicographically (weakly) smaller than $v(Q)$. Note that $\preccurlyeq$ is a total order and that if $P\leq Q$ then $P\preccurlyeq Q$. 

For a positive integer $k$, let \[\mathcal{C}_n^{(k)}=\{P_1\leq P_2\leq\cdots\leq P_k\}\subset \mathcal{C}_n^{k}\]
be the set of chains of length $k$ in the poset on Dyck paths of semilength $n$. In particular, $\mathcal{C}_n^{(1)}=\mathcal{C}_n$ and $\mathcal{C}_n^{(2)}$ is the set of pairs of comparable Dyck paths $(P\leq Q)$. 

Another important family of combinatorial objects that we need is \emph{matchings}. Denote the set of matchings on $1,2,\ldots,2n$ by $\MM_n$. For a pair $(i,j)$ in a matching $M$ with $i<j$, we say that $i$ is the \emph{left endpoint} and $j$ is the \emph{right endpoint}. Let $\CR(M)$ denote the set of crossings of $M$, given by
\[\CR(M)=\{(a,b,c,d)\:|\: a<b<c<d,\ (a,c),(b,d)\in M\}.\] A \emph{strand diagram} of a matching $M\in\MM_n$ is a drawing of $M$ by $n$ arcs, called \emph{strands} or \emph{wires}, either on a disk or a straight line such that no two strands cross twice. We consider the strand diagrams up to homotopy so for example, if $M=\{(1,2),(3,4),\ldots,(2n-1,2n)\}$, then there is a unique strand diagram of $M$. 
\begin{defin}
A matching $M$ of $1,2,\ldots,2n$ is called $k$-\textit{noncrossing} if there do not exist $k$ pairs in $M$ that cross pairwise. Denote the set of $2$-noncrossing matchings, i.e. noncrossing matchings by $\NCM_n$ and the set of $3$-noncrossing matchings by $\TNC_n$.
\end{defin}
We now describe a bijection between Dyck paths $\mathcal{C}_n$ and noncrossing matchings $\NCM_n$. For a Dyck path $P\in\mathcal{C}_n$, label its steps (edges) by $1,2,\ldots,2n$ from left to right and pair up the upsteps with downsteps of the same height with no steps in between to obtain a noncrossing matching $M\in\NCM_n$.  For the inverse, given a noncrossing matching $M$, construct a Dyck path $P$ so that the $i$-th step is a upstep if $i$ is a left endpoint in $M$, and a downstep if $i$ is a right endpoint in $M$. See \cref{fig:bijection-Dyck-NCM} for an example.
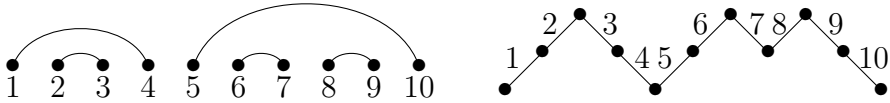
\begin{figure}[h!]
\centering
\begin{tikzpicture}[scale=0.6]
\node at (1,0) {$\bullet$};
\node at (2,0) {$\bullet$};
\node at (3,0) {$\bullet$};
\node at (4,0) {$\bullet$};
\node at (5,0) {$\bullet$};
\node at (6,0) {$\bullet$};
\node at (7,0) {$\bullet$};
\node at (8,0) {$\bullet$};
\node at (9,0) {$\bullet$};
\node at (10,0) {$\bullet$};
\node[below] at (1,0) {$1$};
\node[below] at (2,0) {$2$};
\node[below] at (3,0) {$3$};
\node[below] at (4,0) {$4$};
\node[below] at (5,0) {$5$};
\node[below] at (6,0) {$6$};
\node[below] at (7,0) {$7$};
\node[below] at (8,0) {$8$};
\node[below] at (9,0) {$9$};
\node[below] at (10,0) {$10$};
\draw[bend left=70] (1,0) to (4,0);
\draw[bend left=70] (2,0) to (3,0);
\draw[bend left=70] (5,0) to (10,0);
\draw[bend left=70] (8,0) to (9,0);
\draw[bend left=70] (6,0) to (7,0);
\end{tikzpicture}
\quad
\begin{tikzpicture}[scale=0.5]
\node at (0,0) {$\bullet$};
\node at (1,1) {$\bullet$};
\node at (2,2) {$\bullet$};
\node at (3,1) {$\bullet$};
\node at (4,0) {$\bullet$};
\node at (5,1) {$\bullet$};
\node at (6,2) {$\bullet$};
\node at (7,1) {$\bullet$};
\node at (8,2) {$\bullet$};
\node at (9,1) {$\bullet$};
\node at (10,0) {$\bullet$};
\draw(0,0)--(2,2)--(4,0)--(6,2)--(7,1)--(8,2)--(10,0);
\node[above] at (0.2, 0.3) {$1$};
\node[above] at (1.2, 1.2) {$2$};
\node[above] at (2.8,1.2) {$3$};
\node[above] at (3.65,0.3) {$4$};
\node[above] at (4.25,0.3) {$5$};
\node[above] at (5.2,1.2) {$6$};
\node[above] at (6.7,1.2) {$7$};
\node[above] at (7.3,1.2) {$8$};
\node[above] at (8.8,1.2) {$9$};
\node[above] at (9.8,0.3) {$10$};

\end{tikzpicture}
\caption{Example of a Dyck path of semilength 5 and its corresponding noncrossing matching under the bijection described above}
\label{fig:bijection-Dyck-NCM}
\end{figure}

Th cover relations in the poset structure on Dyck paths can be described as follows: $P\lessdot P'$ if $P'$ can be obtained from $P$ by changing an upstep followed by a downstep, to a downstep followed by an upstep. The poset structure on noncrossing matchings is inherited from that on Dyck paths. And the cover relations can be described similarly: $M\lessdot M'$ if we can change two pairs $(a,b)$ and $(b+1,c)$ in $M$ to $(a,c)$ and $(b,b+1)$ in $M'$, where $a<b<b+1<c$. 

Our third Catalan object is the set of noncrossing partitions. 
\begin{defin}
A partition $\sigma$ of $[\bar n]=\{\bar1,\bar2,\ldots,\bar n\}$ is called \emph{noncrossing} if there do not exist $a<b<c<d$ such that $\bar a$ and $\bar c$ are in one part of $\sigma$ while $\bar b$ and $\bar d$ are in another. Denote the set of noncrossing partitions as $\NP_n$. 
\end{defin}
The natural bijections between $\NP_n$ and $\NCM_n$ and $\mathcal{C}_n$ are also easy to describe. 

We first start by describing the bijection between $\NCM_n$ and $\NP_n$. Let $M\in\NCM_n$ be a noncrossing matching on $[2n]$. We draw the strand diagram $M$ either on a circle or on a straight line, dividing into smaller regions, put $\bar i$ between $2i-1$ and $2i$, for $i=1,\ldots,n$.  We obtain a noncrossing partition $\sigma$ of $[\bar n]$ whose parts consist of the points $\bar i$ in the same region.  The inverse bijection from $\NP_n$ to $\NCM_n$ is more conveniently described inductively. Let $\sigma$ be a noncrossing partition, then $\sigma$ must contain a part of the form $(a+1,\ldots,a+k)$ for some $a\geq0$ and $k\geq1$. We remove this part and add a partial matching by connecting $2a+1$ with $2a+2k$, $2a+2$ with $2a+3$, $2a+4$ with $2a+5$ and so on up to $2a+2k-2$ with $2a+2k-1$. We repeat this process until we obtain a noncrossing matching.

Now we describe the bijection between $\mathcal{C}_n$ and $\NP_n$. Let $P\in\mathcal{C}_n$ be a Dyck path. There are $2n+1$ lattice points in $P$, including $(0,0)$ and $(2n,0)$. Label the $2i$-th lattice point in $P$ by $\bar i$, for $i=1,\ldots,n$, so that $(1,1)$ is labeled by $\bar 1$. Then we identify $\bar i$ and $\bar j$ to be in the same part of the noncrossing partition $\sigma$ if they are on the same horizontal line, not separated by the Dyck path $P$. To go the other way, we apply the following recursive procedure. Let $\sigma$ be a noncrossing partition and let the part containing $1$ be $\{1=a_0<a_1<\cdots<a_k=m\}$ with $k\geq0$. This means that each part of $\sigma$ either has all entries at most $m$, or has all entries at least $m+1$. We deal with these two kinds separately and attach the two resulting Dyck paths at coordinate $(2m,0)$. Thus assume $m=n$. Similarly, all other parts of $\sigma$ have their entries strictly between $a_i$ and $a_{i+1}$, and let $P^{(i)}$ be the Dyck path obtained from such $i$, after adjusting the entries from $\{a_i+1,\ldots,a_{i+1}-1\}$ to $\{1,\ldots,a_{i+1}-a_i-1\}$. Attach each $P^{(i)}$ from $(2a_i+2,2)$ to $(2a_{i+1}-2,2)$ and we have the desired Dyck path $P\in\mathcal{C}_n$.

See \cref{fig:bijection-NP-Dyck-NCM} for an illustration of the bijections between $\NP_n, \NCM_n$ and $\mathcal{C}_n$.
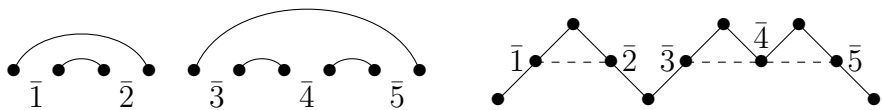
\begin{figure}[h!]
\centering
\begin{tikzpicture}[scale=0.6]
\node at (1,0) {$\bullet$};
\node at (2,0) {$\bullet$};
\node at (3,0) {$\bullet$};
\node at (4,0) {$\bullet$};
\node at (5,0) {$\bullet$};
\node at (6,0) {$\bullet$};
\node at (7,0) {$\bullet$};
\node at (8,0) {$\bullet$};
\node at (9,0) {$\bullet$};
\node at (10,0) {$\bullet$};
\node[below] at (1.5,0) {$\bar1$};
\node[below] at (3.5,0) {$\bar2$};
\node[below] at (5.5,0) {$\bar3$};
\node[below] at (7.5,0) {$\bar4$};
\node[below] at (9.5,0) {$\bar5$};
\draw[bend left=70] (1,0) to (4,0);
\draw[bend left=70] (2,0) to (3,0);
\draw[bend left=70] (5,0) to (10,0);
\draw[bend left=70] (8,0) to (9,0);
\draw[bend left=70] (6,0) to (7,0);
\end{tikzpicture}
\quad
\begin{tikzpicture}[scale=0.5]
\node at (0,0) {$\bullet$};
\node at (1,1) {$\bullet$};
\node at (2,2) {$\bullet$};
\node at (3,1) {$\bullet$};
\node at (4,0) {$\bullet$};
\node at (5,1) {$\bullet$};
\node at (6,2) {$\bullet$};
\node at (7,1) {$\bullet$};
\node at (8,2) {$\bullet$};
\node at (9,1) {$\bullet$};
\node at (10,0) {$\bullet$};
\draw(0,0)--(2,2)--(4,0)--(6,2)--(7,1)--(8,2)--(10,0);
\node[left] at (1,1.1) {$\bar1$};
\node[right] at (3,1.1) {$\bar2$};
\node[left] at (5,1.1) {$\bar3$};
\node[above] at (7,1.1) {$\bar4$};
\node[right] at (9,1.1) {$\bar5$};
\draw[dashed](1,1)--(3,1);
\draw[dashed](5,1)--(9,1);
\end{tikzpicture}
\caption{The noncrossing matching (left) and the Dyck path (right) in bijection with the noncrossing partition $\sigma=(\bar 1\bar 2|\bar3\bar4\bar5)$}
\label{fig:bijection-NP-Dyck-NCM}
\end{figure}

For a noncrossing partition $\sigma$ on $[\bar n]$, one has a \emph{dual noncrossing} partition $\tilde\sigma$ on $[\tilde n]$ obtained by drawing $\sigma$ on a disk, connecting boundary vertices in the same parts, putting $\tilde i$ between $\bar i$ and $\overline{i{+}1}$ and identifying the partition $\tilde\sigma$ as the regions. See \cref{fig:dual-partition-ex1}. We also often denote $\sigma$ as a partition on $1,3,\ldots,2n-1$ and $\tilde\sigma$ as a partition on $2,4,\ldots,2n$.
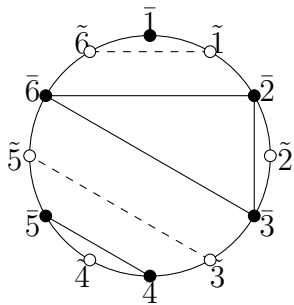
\begin{figure}[h!]
\centering
\begin{tikzpicture}[scale=0.4]
\def\r{4};
\draw (0,0) circle (\r);
\coordinate (b1) at (90:\r);
\coordinate (b2) at (30:\r);
\coordinate (b3) at (330:\r);
\coordinate (b4) at (270:\r);
\coordinate (b5) at (210:\r);
\coordinate (b6) at (150:\r);
\coordinate (t1) at (60:\r);
\coordinate (t2) at (0:\r);
\coordinate (t3) at (300:\r);
\coordinate (t4) at (240:\r);
\coordinate (t5) at (180:\r);
\coordinate (t6) at (120:\r);
\node at (90:\r+0.5) {$\bar1$};
\node at (30:\r+0.5) {$\bar2$};
\node at (330:\r+0.5) {$\bar3$};
\node at (270:\r+0.5) {$\bar4$};
\node at (210:\r+0.5) {$\bar5$};
\node at (150:\r+0.5) {$\bar6$};
\node at (60:\r+0.5) {$\tilde1$};
\node at (0:\r+0.5) {$\tilde2$};
\node at (300:\r+0.5) {$\tilde3$};
\node at (240:\r+0.5) {$\tilde4$};
\node at (180:\r+0.5) {$\tilde5$};
\node at (120:\r+0.5) {$\tilde6$};
\draw(b2)--(b3)--(b6)--(b2);
\draw(b4)--(b5);
\draw[dashed](t1)to(t6);
\draw[dashed](t3)to(t5);
\node[circle, draw=black, fill=black, minimum size=1, scale=0.4,] at (b1) {};
\node[circle, draw=black, fill=black, minimum size=1, scale=0.4,] at (b2) {};
\node[circle, draw=black, fill=black, minimum size=1, scale=0.4,] at (b3) {};
\node[circle, draw=black, fill=black, minimum size=1, scale=0.4,] at (b4) {};
\node[circle, draw=black, fill=black, minimum size=1, scale=0.4,] at (b5) {};
\node[circle, draw=black, fill=black, minimum size=1, scale=0.4,] at (b6) {};
\node[circle, draw=black, fill=white, minimum size=1, scale=0.4,] at (t1) {};
\node[circle, draw=black, fill=white, minimum size=1, scale=0.4,] at (t2) {};
\node[circle, draw=black, fill=white, minimum size=1, scale=0.4,] at (t3) {};
\node[circle, draw=black, fill=white, minimum size=1, scale=0.4,] at (t4) {};
\node[circle, draw=black, fill=white, minimum size=1, scale=0.4,] at (t5) {};
\node[circle, draw=black, fill=white, minimum size=1, scale=0.4,] at (t6) {};
\end{tikzpicture}
\caption{A noncrossing partition $\sigma=(\bar1|\bar2\bar3\bar6|\bar4\bar5)$ with its dual $\tilde\sigma=(\tilde1\tilde6|\tilde2|\tilde3\tilde5|\tilde4)$}
\label{fig:dual-partition-ex1}
\end{figure}
It is easy to obtain that $|\sigma|+|\tilde\sigma|=n+1$, where $|\sigma|$ is the number of parts of $\sigma$ (Lemma 2.2 of \cite{lam2004electroid}).

As for notations, we typically use $P,Q$ for Dyck paths, $\tau,M,\xi$ for matchings, and $\sigma$ for noncrossing partitions.  Also, denote these bijections by $\sigma:\mathcal{C}_n\rightarrow \NP_n$, $\sigma:\NCM_n\rightarrow \NP_n$, $\tau:\mathcal{C}_n\rightarrow \NCM_n$, $\tau:\NP_n\rightarrow\NCM_n$. 

\section{Combinatorics of $3$-noncrossing matchings}\label{sec:combinatorics-3noncrossing}

\subsection{3-noncrossing matchings and pairs of comparable Dyck paths}
In this section, we describe a bijection between $3$-noncrossing matchings $\TNC_n$ of $n$ and pairs of comparable Dyck paths $\mathcal{C}_n^{(2)}$ in two languages, one formulated as in \cite{chen2007crossings} and the other one to be used later. 

We first formulate the construction in Chen et al. \cite{chen2007crossings}. For a matching $M\in\MM_n$, associate a sequence of standard Young tableaux $T_0,T_1,\ldots,T_{2n}$, starting with $T_{2n}=\emptyset$, and for $j<2n$, 
\begin{itemize}
\item if $j$ is the right end point of a pair $(i,j)$ in $M$, insert $i$ into $T_{j+1}$ to obtain $T_j$, in the sense of row insertion in RSK (see for example \cite{ec1});
\item if $j$ is the left end point of a pair $(j,k)$ in $M$, delete $j$ from $T_{j+1}$ to obtain $T_j$. 
\end{itemize}
For a $k$-noncrossing matching $M$, $k\geq2$, Chen et al. \cite{chen2007crossings} showed that the tableaux $T_0,\ldots,T_{2n}$ contain at most $k-1$ rows. Thus, for $M\in\TNC_n$, let the shape of the corresponding standard Young tableau $T_{j}$ be $(x_j\geq y_j)$. Note that the $y_j$'s can be $0$. Define the map $\varphi: \TNC_n\to \mathcal{C}_n^{(2)}$ as $\varphi(M):=(P_1\leq P_2)$, where $P_1$ is the Dyck path that passes through the coordinates $\{(j,x_j-y_j)\}_{j=0}^{2n}$ and $P_2$ is the Dyck path that passes through the coordinates $\{(j,x_j+y_j)\}_{j=0}^{2n}$. See \cref{fig:bijection-chen-3noncrossing-pair} for an example.

\begin{figure}[h!]
\centering
\begin{tikzpicture}[scale=0.6]
\node at (1,0) {$\bullet$};
\node at (2,0) {$\bullet$};
\node at (3,0) {$\bullet$};
\node at (4,0) {$\bullet$};
\node at (5,0) {$\bullet$};
\node at (6,0) {$\bullet$};
\node at (7,0) {$\bullet$};
\node at (8,0) {$\bullet$};
\node at (9,0) {$\bullet$};
\node at (10,0) {$\bullet$};
\node[below] at (1,0) {$1$};
\node[below] at (2,0) {$2$};
\node[below] at (3,0) {$3$};
\node[below] at (4,0) {$4$};
\node[below] at (5,0) {$5$};
\node[below] at (6,0) {$6$};
\node[below] at (7,0) {$7$};
\node[below] at (8,0) {$8$};
\node[below] at (9,0) {$9$};
\node[below] at (10,0) {$10$};
\draw[bend left=50] (1,0) to (8,0);
\draw[bend left=50] (2,0) to (4,0);
\draw[bend left=50] (3,0) to (10,0);
\draw[bend left=50] (5,0) to (9,0);
\draw[bend left=50] (6,0) to (7,0);
\end{tikzpicture}
\quad
\begin{tikzpicture}[scale=0.5]
\def\x{0.5}
\node at (0,0) {$\bullet$};
\node at (1,1) {$\bullet$};
\node at (2,2) {$\bullet$};
\node at (3,3) {$\bullet$};
\node at (4,2) {$\bullet$};
\node at (5,3) {$\bullet$};
\node at (6,4) {$\bullet$};
\node at (7,3) {$\bullet$};
\node at (8,2) {$\bullet$};
\node at (9,1) {$\bullet$};
\node at (10,0) {$\bullet$};
\draw(0,0)--(3,3)--(4,2)--(6,4)--(10,0);
\node at (0,0-\x) {$\bullet$};
\node at (1,1-\x) {$\bullet$};
\node at (2,2-\x) {$\bullet$};
\node at (3,1-\x) {$\bullet$};
\node at (4,0-\x) {$\bullet$};
\node at (5,1-\x) {$\bullet$};
\node at (6,2-\x) {$\bullet$};
\node at (7,1-\x) {$\bullet$};
\node at (8,2-\x) {$\bullet$};
\node at (9,1-\x) {$\bullet$};
\node at (10,0-\x) {$\bullet$};
\draw(0,0-\x)--(2,2-\x)--(4,0-\x)--(6,2-\x)--(7,1-\x)--(8,2-\x)--(10,0-\x);
\end{tikzpicture}
\

\

\begin{tikzpicture}[scale=1.0]
\node (a0) at (0,0) {$\emptyset$};
\node (a1) at (1,0) {$1$};
\node (a2) at (2,0) {$12$};
\node[align=left] (a3) at (3,0) {$12$\\$3$};
\node[align=left] (a4) at (4,0) {$1$\\$3$};
\node[align=left] (a5) at (5,0) {$15$\\$3$};
\node[align=left] (a6) at (6,0) {$156$\\$3$};
\node[align=left] (a7) at (7,0) {$15$\\$3$};
\node (a8) at (8,0) {$35$};
\node (a9) at (9,0) {$3$};
\node (a10) at (10,0) {$\emptyset$};
\draw[->](a0)--(a1);
\draw[->](a1)--(a2);
\draw[->](a2)--(a3);
\draw[->](a3)--(a4);
\draw[->](a4)--(a5);
\draw[->](a5)--(a6);
\draw[->](a6)--(a7);
\draw[->](a7)--(a8);
\draw[->](a8)--(a9);
\draw[->](a9)--(a10);
\end{tikzpicture}
\caption{A bijection between $\TNC_n$ and $\mathcal{C}^{(2)}_n$ by Chen et al. \cite{chen2007crossings}, with the sequence of standard Young tableau $T_0,T_1,\ldots,T_{2n}$ shown below.}
\label{fig:bijection-chen-3noncrossing-pair}
\end{figure}
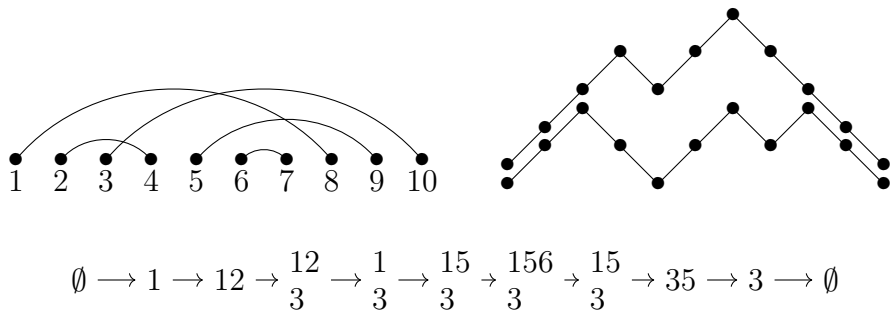

\begin{theorem}[Corollary 5.4 of \cite{chen2007crossings}]
The map $\varphi$ defined above is a bijection between $\TNC_n$ and $\mathcal{C}^{(2)}_n$.
\end{theorem}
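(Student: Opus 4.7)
My plan is to verify $\varphi$ is a bijection in three stages: well-definedness, the Greene-type height bound, and construction of an explicit inverse via invertibility of RSK.

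For well-definedness, I would verify that the sequence $(T_j)_{j=0}^{2n}$ is a valid sequence of standard Young tableaux with $T_0 = T_{2n} = \emptyset$. Insertions and deletions balance out, since each left-endpoint value $i$ of a pair $(i,k) \in M$ is inserted once and removed once over the course of the process. Consecutive shapes $\lambda_j, \lambda_{j+1}$ differ by a single box, so writing $\lambda_j = (x_j, y_j)$, both $(x_j - y_j)_j$ and $(x_j + y_j)_j$ are lattice paths with unit $\pm 1$ steps from $0$ to $0$. The inequalities $x_j \geq y_j \geq 0$, automatic for any shape with at most two rows, yield $P_1 \geq 0$ and $P_2 \geq P_1$. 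Hence $(P_1, P_2) \in \mathcal{C}_n^{(2)}$, provided every $T_j$ has at most two rows.

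The key structural input, and the technical heart of the argument, is the Greene-type theorem: the maximum row count appearing across $T_0, \ldots, T_{2n}$ equals the maximum size of a pairwise crossing set of arcs in $M$. Specialized to $k = 3$, this says $M \in \TNC_n$ if and only if every $T_j$ has at most two rows. This simultaneously completes the well-definedness of $\varphi$ and sets up surjectivity, since every two-row oscillating shape sequence must then arise from some $3$-noncrossing matching. The theorem, proved as Corollary~5.4 of \cite{chen2007crossings}, is established by encoding $M$ as a two-line array and invoking the classical Greene theorem relating RSK shape to longest unions of increasing subsequences.

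Bijectivity then follows from invertibility of the underlying RSK-type insertion and deletion. Given $(P_1, P_2) \in \mathcal{C}_n^{(2)}$, I would recover the shapes $\lambda_j = \bigl((P_1(j)+P_2(j))/2,\,(P_2(j)-P_1(j))/2\bigr)$ and iteratively reconstruct both $(T_j)$ and $M$: at each box-addition step, the inserted label is determined by reverse RSK bumping starting from the added cell, yielding the corresponding pair; at each box-removal step, the removed label is determined by the shape change together with the current tableau. The main obstacle beyond the Greene-type theorem is the careful bookkeeping required to ensure that these two-row shape transitions uniquely determine the matching, which ultimately reduces to the well-known invertibility of RSK row insertion.
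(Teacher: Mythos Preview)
The paper does not supply its own proof of this theorem; it is quoted verbatim as Corollary~5.4 of \cite{chen2007crossings} and used as a black box. So there is nothing in the paper to compare your argument against, and your outline is essentially a sketch of the Chen et al.\ proof itself, which is the appropriate thing to do.

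Two small corrections to your sketch. First, a referencing slip: you cite ``Corollary~5.4 of \cite{chen2007crossings}'' for the Greene-type statement that the maximal row length equals the maximal crossing number, but Corollary~5.4 \emph{is} the bijection you are proving; the Greene-type bound is an earlier theorem in that paper. Second, your description of the inverse map has the roles of box-addition and box-removal swapped. Reconstructing forward from $T_0=\emptyset$: when $\lambda_{j+1}$ has one more box than $\lambda_j$, you are undoing the deletion of the left endpoint $j$, so you simply place the value $j$ in the new corner cell (no bumping needed, since $j$ is larger than everything in $T_j$); when $\lambda_{j+1}$ has one fewer box, you are undoing an insertion at the right endpoint $j$, and \emph{this} is where you reverse-bump from the removed cell of $T_j$ to eject the partner $i$ and record the arc $(i,j)$. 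With these two fixes your argument is a faithful account of the original proof.
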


We give another description of the bijection $\varphi$ via resolutions of crossings. Recall that the set of crossings of $M\in\MM_n$ is denoted $\CR(M)$. For a vector $v\in\{0,1\}^{\CR(M)}$, define the \textit{resolution of crossings} of $M$ with respect to $v$ to be the noncrossing matching obtained from a planar drawing of $M$ by resolving each crossing $x=(a,b,c,d)\in\CR(M)$ locally by connecting the strand $a$ to $b$ and $c$ to $d$ if $v_x=0$, or connecting $a$ to $d$ and $b$ to $c$ if $v_x=1$. Denote this resolution by $M(v)$, which is a noncrossing matching. We note that in the planar drawing of such a resolution, it is possible for a connected component of strands that do not connect to the vertices $1,2,\ldots,2n$ to appear, and we temporarily allow such resolutions. Details regarding this issue will be discussed in \cref{sec:circular-basis}. Let $\mathbf{0}$ denote the all $0$ vector and $\mathbf{1}$ denote the all $1$ vector. We are particularly interested in the resolution of crossings $M(\mathbf{0})$ and $M(\mathbf{1})$.

Examples of resolutions of crossings are shown in \cref{fig:resolution-crossings}.
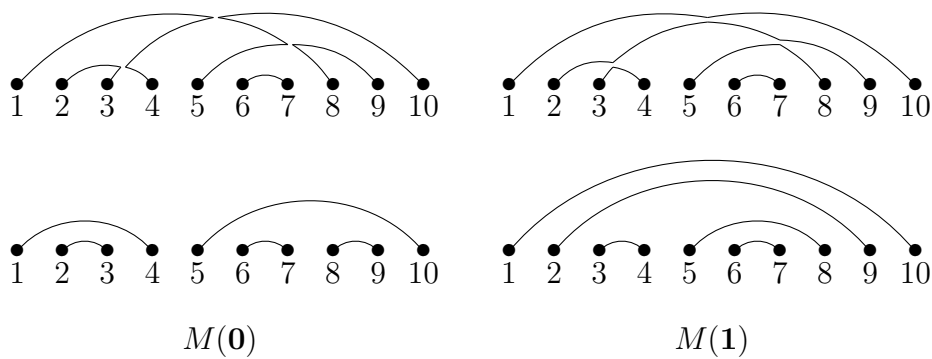
\begin{figure}[h!]
\centering
\begin{tikzpicture}[scale=0.6]
\node at (1,0) {$\bullet$};
\node at (2,0) {$\bullet$};
\node at (3,0) {$\bullet$};
\node at (4,0) {$\bullet$};
\node at (5,0) {$\bullet$};
    \node at (6,0) {$\bullet$};
\node at (7,0) {$\bullet$};
\node at (8,0) {$\bullet$};
\node at (9,0) {$\bullet$};
\node at (10,0) {$\bullet$};
\node[below] at (1,0) {$1$};
\node[below] at (2,0) {$2$};
\node[below] at (3,0) {$3$};
\node[below] at (4,0) {$4$};
\node[below] at (5,0) {$5$};
\node[below] at (6,0) {$6$};
\node[below] at (7,0) {$7$};
\node[below] at (8,0) {$8$};
\node[below] at (9,0) {$9$};
\node[below] at (10,0) {$10$};
\draw[bend left=28] (1,0) to (5.35,1.5);
\draw[bend left=12] (5.45,1.5) to (7,0.9);
\draw[bend left=11] (7.1,0.9) to (8,0);
\draw[bend left=30] (2,0) to (3.3,0.4);
\draw[bend left=20] (3.4,0.4) to (4,0);
\draw[bend left=5] (3,0) to (3.3,0.4);
\draw[bend left=18] (3.4,0.4) to (5.35,1.5);
\draw[bend left=28] (5.45,1.5) to (10,0);
\draw[bend left=25] (5,0) to (7.0,0.9);
\draw[bend left=25] (7.1,0.9) to (9,0);
\draw[bend left=50] (6,0) to (7,0);
\end{tikzpicture}
\quad
\begin{tikzpicture}[scale=0.6]
\node at (1,0) {$\bullet$};
\node at (2,0) {$\bullet$};
\node at (3,0) {$\bullet$};
\node at (4,0) {$\bullet$};
\node at (5,0) {$\bullet$};
\node at (6,0) {$\bullet$};
\node at (7,0) {$\bullet$};
\node at (8,0) {$\bullet$};
\node at (9,0) {$\bullet$};
\node at (10,0) {$\bullet$};
\node[below] at (1,0) {$1$};
\node[below] at (2,0) {$2$};
\node[below] at (3,0) {$3$};
\node[below] at (4,0) {$4$};
\node[below] at (5,0) {$5$};
\node[below] at (6,0) {$6$};
\node[below] at (7,0) {$7$};
\node[below] at (8,0) {$8$};
\node[below] at (9,0) {$9$};
\node[below] at (10,0) {$10$};
\draw[bend left=28] (1,0) to (5.4,1.5);
\draw[bend left=11] (5.4,1.4) to (7,1);
\draw[bend left=11] (7,0.9) to (8,0);
\draw[bend left=30] (2,0) to (3.3,0.5);
\draw[bend left=20] (3.3,0.4) to (4,0);
\draw[bend left=5] (3,0) to (3.3,0.4);
\draw[bend left=17] (3.3,0.5) to (5.4,1.4);
\draw[bend left=28] (5.4,1.5) to (10,0);
\draw[bend left=25] (5,0) to (7,0.9);
\draw[bend left=25] (7,1) to (9,0);
\draw[bend left=50] (6,0) to (7,0);
\end{tikzpicture}

\begin{tikzpicture}[scale=0.6]
\node at (1,0) {$\bullet$};
\node at (2,0) {$\bullet$};
\node at (3,0) {$\bullet$};
\node at (4,0) {$\bullet$};
\node at (5,0) {$\bullet$};
\node at (6,0) {$\bullet$};
\node at (7,0) {$\bullet$};
\node at (8,0) {$\bullet$};
\node at (9,0) {$\bullet$};
\node at (10,0) {$\bullet$};
\node[below] at (1,0) {$1$};
\node[below] at (2,0) {$2$};
\node[below] at (3,0) {$3$};
\node[below] at (4,0) {$4$};
\node[below] at (5,0) {$5$};
\node[below] at (6,0) {$6$};
\node[below] at (7,0) {$7$};
\node[below] at (8,0) {$8$};
\node[below] at (9,0) {$9$};
\node[below] at (10,0) {$10$};
\draw[bend left=50] (1,0) to (4,0);
\draw[bend left=50] (2,0) to (3,0);
\draw[bend left=50] (5,0) to (10,0);
\draw[bend left=50] (6,0) to (7,0);
\draw[bend left=50] (8,0) to (9,0);
\node at (5.5, -2) {$M(\mathbf{0})$};
\end{tikzpicture}
\quad
\begin{tikzpicture}[scale=0.6]
\node at (1,0) {$\bullet$};
\node at (2,0) {$\bullet$};
\node at (3,0) {$\bullet$};
\node at (4,0) {$\bullet$};
\node at (5,0) {$\bullet$};
\node at (6,0) {$\bullet$};
\node at (7,0) {$\bullet$};
\node at (8,0) {$\bullet$};
\node at (9,0) {$\bullet$};
\node at (10,0) {$\bullet$};
\node[below] at (1,0) {$1$};
\node[below] at (2,0) {$2$};
\node[below] at (3,0) {$3$};
\node[below] at (4,0) {$4$};
\node[below] at (5,0) {$5$};
\node[below] at (6,0) {$6$};
\node[below] at (7,0) {$7$};
\node[below] at (8,0) {$8$};
\node[below] at (9,0) {$9$};
\node[below] at (10,0) {$10$};
\draw[bend left=50] (1,0) to (10,0);
\draw[bend left=50] (2,0) to (9,0);
\draw[bend left=50] (3,0) to (4,0);
\draw[bend left=50] (5,0) to (8,0);
\draw[bend left=50] (6,0) to (7,0);
\node at (5.5, -2) {$M(\mathbf{1})$};
\end{tikzpicture}
\caption{Examples for resolution of crossings with $M$ shown in \cref{fig:resolution-crossings}.}
\label{fig:resolution-crossings}
\end{figure}

\begin{theorem}\label{thm:equality-of-bijection}
Let $M \in \TNC_n$.  Then we have $\varphi(M)=(M(\mathbf{0}),M(\mathbf{1}))$, where $M(\mathbf{0})$ and $M(\mathbf{1})$ are identified with Dyck paths via the bijection $\tau^{-1}:\NCM_n \to \mathcal{C}_n$.
\end{theorem}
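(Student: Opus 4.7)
The proof splits into two halves: matching $\tau^{-1}(M(\mathbf{1}))$ with $P_2$ and $\tau^{-1}(M(\mathbf{0}))$ with $P_1$. The foundation for both halves is the observation that the Dyck path $\tau^{-1}(N)$ of a noncrossing matching $N \in \NCM_n$ has height at position $j$ equal to the number of pairs $(p,q)\in N$ with $p \le j < q$, i.e., the number of pairs ``open'' at time $j$. So the theorem reduces to showing that $M(\mathbf{1})$ and $M(\mathbf{0})$ have $x_j+y_j$ and $x_j-y_j$ open pairs at time $j$, respectively.

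For the $M(\mathbf{1})$ side, the key lemma is that resolution $\mathbf{1}$ preserves the left/right endpoint status of every boundary position. At a single crossing $(a,b,c,d)$, the local replacement $\{(a,c),(b,d)\} \mapsto \{(a,d),(b,c)\}$ keeps $a,b$ as smaller endpoints of their new pairs and $c,d$ as larger endpoints; equivalently, the four strand pieces at the crossing all exit in the same horizontal direction as they entered. Since each resolution acts locally, this direction-preserving property propagates globally, and each boundary endpoint $p$ has the same left/right status in $M$ and in $M(\mathbf{1})$. Therefore the number of open pairs of $M(\mathbf{1})$ at time $j$ equals that of $M$. By Chen et al.'s construction of $T_j$, whose entries are the left endpoints of pairs of $M$ open at time $j$, this count equals $|T_j|=x_j+y_j$, identifying $\tau^{-1}(M(\mathbf{1}))$ with $P_2$.

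For the $M(\mathbf{0})$ side I aim to show that the number of pairs of $M(\mathbf{0})$ open at time $j$ equals $x_j - y_j$, or equivalently that this number plus the number of open pairs of $M$ equals $2 x_j$. To interpret $x_j$ combinatorially, I would apply Greene's theorem to $T_j$: the first-row length $x_j$ is the longest increasing subsequence of the RSK input, and unpacking Chen et al.'s description of this input (the left endpoints $i_a$ of open pairs sorted by decreasing right endpoint $k_a$), such an increasing subsequence corresponds to a family of pairs whose $i$'s increase and $k$'s decrease, i.e., a pairwise-nested chain. Hence $x_j$ equals the length of the longest nesting chain of open pairs of $M$ at time $j$. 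Because $M$ is $3$-noncrossing, the permutation encoding open pairs at each time is $123$-avoiding, so the open pairs decompose into two nesting chains of lengths $x_j$ and $y_j$.

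To finish the $M(\mathbf{0})$ identity I would induct on $|\CR(M)|$. The base case of a noncrossing $M$ is trivial: $M(\mathbf{0})=M$, the longest nesting chain contains all open pairs, and the identity holds. For the inductive step I would select a crossing $(a,b,c,d)$ that is \emph{innermost}, in the sense that no other pair of $M$ crosses either of the nested arcs $(a,d)$ or $(b,c)$, and form $M'$ by the local swap $\{(a,c),(b,d)\}\mapsto\{(a,d),(b,c)\}$; this reduces $|\CR(M)|$ by one and preserves $3$-noncrossingness. The innermost condition confines the effect of the swap on $M(\mathbf{0})$ to the interval $[b,c]$, reducing the desired identity to a local comparison of open-pair counts on this interval for $M$ and $M'$. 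The principal obstacle is precisely this local comparison: the chaining of the remaining $\mathbf{0}$-resolutions can still shuffle partners of positions inside and outside $[b,c]$, and one must show that the net change in the number of open pairs of $M(\mathbf{0})$ matches the net change in $2x_j$ at every $j$ as we pass from $M$ to $M'$.
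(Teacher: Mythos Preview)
Your treatment of $M(\mathbf{1})=P_2$ is correct and is exactly the paper's argument: the $\mathbf{1}$-resolution preserves left/right endpoint status, so the open-pair count at time $j$ equals $|T_j|=x_j+y_j$.

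For $M(\mathbf{0})$ there is a genuine gap, which you yourself flag as ``the principal obstacle.'' Resolving an innermost crossing $(a,b,c,d)$ with $a<b<c<d$ arbitrary does not localize its effect on the tableau sequence: the insertions and deletions that build $T_j$ for $a\le j\le d$ can all be perturbed, and there is no evident reason the change in $x_j-y_j$ should match the change in the open-pair count of $M(\mathbf{0})$ at every such $j$. Your Greene-theoretic reading of $x_j$ as the longest nesting chain among open pairs is correct, but it does not by itself close this gap, and the inductive step as written is not a proof.

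The paper avoids this difficulty by choosing the crossing far more carefully. It picks $j$ with $j$ a left endpoint and $j{+}1$ a right endpoint; if $(j,j{+}1)\in M$ it deletes this pair and inducts on $n$, and otherwise it performs the $\mathbf{0}$-resolution at the crossing $(a,j,j{+}1,b)$ to obtain $M'$. The point of taking the two middle positions \emph{adjacent} is that the tableau sequences of $M$ and $M'$ agree for $i\ge j{+}2$ (up to the harmless relabeling $j\leftrightarrow j{+}1$) and, after a short commutation argument for insertion and deletion, also for $i\le j$. Thus the entire comparison $x_i-y_i=x_i'-y_i'$ collapses to the single index $i=j{+}1$, where a two-case analysis of whether $\overline{j}$ sits in row one or row two of $T_{j+2}$ finishes it. Replacing your ``innermost'' crossing by this adjacent one is the missing idea.
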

\begin{proof}
We first recall the bijection between noncrossing matchings $\NCM_n$ and Dyck paths $\mathcal{C}_n$. A noncrossing matching $M\in\NCM_n$ is in bijection with a Dyck path $P\in\mathcal{C}_n$ if for all $j=1,\ldots,2n$, $j$ is a left endpoint in $M$ if and only if the $j$-th step is an upstep in $P$. 

For a $3$-noncrossing matching $M\in\TNC_n$, write $\varphi(M)=(P_0\leq P_1)$ for notation purposes of this proof. Let $T_0,\ldots,T_{2n}$ be the sequence of tableaux corresponding to $M$ as defined above in this section. By construction, $T_j$ has one more entry than $T_{j-1}$ if and only if $j$ is a left endpoint in $M$. At the same time, we see from the construction of $M(\mathbf{1})$ that $M(\mathbf{1})$ and $M$ have the same set of left endpoints and right endpoints (see \cref{fig:resolution-crossings} for a reference). Recall that the height of $P_1$ is given by the sizes of $T_j$'s, and by the bijection between noncrossing matchings and Dyck paths, we directly see that $P_1=M(\mathbf{1})$. 

We use induction on $n$ and on $|\CR(M)|$ to show that $P_0$ and $M(\mathbf{0})$ are in bijection under $\tau$. The base case $n=1$ is trivial. The other base case is when $\CR(M)=\emptyset$, i.e. $M$ is noncrossing. Then the tableau $T_0,\ldots,T_{2n}$ all consist of a single row, so we can see that $P_0=P_1$ is indeed in bijection with $M(\mathbf{0})=M(\mathbf{1})=M$ under $\tau$. 

For the inductive step, as $1$ is a left endpoint in $M$ and $2n$ is a right endpoint in $M$, there must exist a $j\in\{1,\ldots,2n-1\}$ such that $j$ is a left endpoint and $j+1$ is a right endpoint in $M$. There are two possible cases: either $(j, j+1)$ are paired in the matching $M$, or they are not paired in $M$. In the first case where $(j,j+1)$ are paired $M$, let $M'\in \TNC_{n-1}$ be obtained from $M$ by deleting the pair $(j,j+1)$ from $M$ and flattening other entries. By the induction hypothesis, we have $\varphi(M')=(P_0'\leq P_1')$ where $P_0'$ is in bijection with $M'(\mathbf{0})$. We know that $M'(\mathbf{0})$ is obtained from $M(\mathbf{0})$ by removing the pair $(j,j+1)$, so to show that $P_0$ and $M(\mathbf{0})$ are in bijection given that $P_0'$ and $M'(\mathbf{0})$ are in bijection, it suffices to show that $P_0'$ is obtained from $P_0$ by deleting an upstep at $j$ and a downstep at $j+1$. By definition of $T_0,\ldots,T_{2n}$, $T_{j+1}$ is obtained from $T_{j+2}$ by inserting $j$, which is strictly larger than other values in $T_{j+2}$ so $j$ gets inserted as the rightmost entry of the first row in $T_{j+1}$. Then $T_j$ is obtained from $T_{j+1}$ by deleting $j$ from $T_{j+1}$, so $T_j=T_{j+2}$. This means that step $j$ of $P_0$ is an upstep and step $j+1$ is a downstep. Moreover, the sequence of tableaux for $M'$ is $T_0,\ldots,T_{j}=T_{j+2},T_{j+3},\ldots,T_{2n}$ so we conclude that $P_0'$ is obtained from $P_0$ by deleting step $j$ and $j+1$ and we are done.

In the second case where $(j,j+1)$ are not paired in the matching $M$. Suppose that $j$ is paired with $b$ and $j+1$ is paired with $a$ in $M$, where $b>j+1$ and $a<j$. Let $M'$ be obtained from $M$ by swapping the roles of $j$ and $j+1$, i.e. $M'$ is obtained from $M$ by replacing the pair $(a,j+1)$ and $(j,b)$ with $(a,j)$ and $(j+1,b)$. We show that $M'\in \TNC_n$. If there exist three strands $A,B,C$ in $M'$ that intersect pairwise, then at least one of them is either $(a,j)$ or $(j+1,b)$. Say $A=(a,j)$. Then as $(j+1,b)$ does not intersect with $(a,j)$, $(j+1,b)$ does not belong to one of these strands. This means that $(a,j+1)$, $B$ and $C$ also intersect pairwise in $M$, contradicting $M\in \TNC_n$. Let $T_0',\ldots,T_{2n}'$ be the sequence of tableau associated to $M'$ and let $\varphi(M')=(P_0'\leq P_1')$. Let the shape of $T_i$ be $(x_i,y_i)$ and the shape of $T_{i}'$ be $(x_i',y_i')$ for $i=1,\ldots,2n$. The definition of $M(\mathbf{0})$ gives $M(\mathbf{0})=M'(\mathbf{0})$, as $M'$ can be viewed as resolving one crossing from $M$. As $|cr(M')|<|cr(M)|$, by induction hypothesis, $P_0'$ and $M'(\mathbf{0})$ are in bijection. To show that $P_0$ and $M(\mathbf{0})$ are in bijection, it suffices to show that $P_0=P_0'$, i.e. $x_i-y_i=x_i'-y_i'$ for $i=1,\ldots,2n.$

It is clear that $T_i=T_i'$ if $i>b$, and that $T_i$ has the same shape as $T_i'$ if $j+2\leq i\leq b$, while replacing the value $j+1$ in $T_i'$ by $j$. For simplicity, we use $\overline{j}$ to mean $j$ in the tableaux $T_i$, and $j+1$ in $T_i'$, for $i=1,\ldots,2n$. We see that $\overline{j}$ is the largest value in $T_{j+2}$ and $T_{j+2}'$. Now, $T_{j+1}$ is obtained from $T_{j+2}$ by inserting $a$ and $T_{j}$ is obtained from $T_{j+1}$ by removing $\overline{j}$, while $T_{j+1}'$ is obtained from $T_{j+2}'$ by removing $\overline{j}$ and $T_{j}'$ is obtained from $T_{j+1}'$ by inserting $a$. If $\overline{j}$ is in the second row of $T_{j+2}$ and $T_{j+2}'$, then $a$ must be inserted in the first row of $T_{j+2}$, since otherwise, insertion of $a$ will bump some value $c<\overline{j}$ to the second row of $T_{j+2}$, resulting in another bump that creates a third row, contradicting $T_{j+1}$ having at most $2$ rows. In this case, $(x_{j+1},y_{j+1})=(x_{j+2}+1,y_{j+2})$ while $(x_{j+1}',y_{j+1}')=(x_{j+2},y_{j+2}-1)$, satisfying $x_{j+1}-y_{j+1}=x_{j+1}'-y_{j+1}'$. Moreover, removing $\overline{j}$ and inserting $a$ commute on $T_{j+2}$ and $T_{j+2}'$, so $T_i=T_i'$ for $i\leq j$. If $\overline{j}$ is in the first row of $T_{j+2}$ and $T_{j+2}'$, inserting $a$ will increase the length of the second row, so $(x_{j+1},y_{j+1})=(x_{j+2},y_{j+2}+1)$ while $(x_{j+1}',y_{j+1}')=(x_{j+2}-1,y_{j+2})$, satisfying $x_{j+1}-y_{j+1}=x_{j+1}'-y_{j+1}'$. Moreover, whether $a$ bumps $\overline{j}$ to the second row or $a$ bumps some other value to the second row, we direcly see that removing $\overline{j}$ and inserting $a$ commute. As a result, $x_i-y_i=x_i'-y_i'$ in all cases, concluding our proof.
\end{proof}

\begin{prop}\label{prop:resolution-1-maximum}
For $M\in\TNC_n$ and $v\in\{0,1\}^{\CR(M)}$, $M(v)\leq M(\mathbf{1})$, with equality if and only if $v=\mathbf{1}$.
\end{prop}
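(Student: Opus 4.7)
The plan is to establish $h_{M(v)}(i) \leq h_{M(\mathbf{1})}(i)$ at every position $i$, with strict inequality at some $i$ whenever $v \neq \mathbf{1}$; here $h_P(i) := |\{(j,k) \in P : j \leq i < k\}|$ is the height at position $i$ of the Dyck path associated to a noncrossing matching $P$, and I extend the same definition to $M$ itself. The first step is the identity $h_{M(\mathbf{1})}(i) = h_M(i)$: at each crossing $(a,b,c,d)$ the $v=1$ resolution sends $(a,c),(b,d)$ to $(a,d),(b,c)$, preserving the sets of left endpoints $\{a,b\}$ and right endpoints $\{c,d\}$, so the up/down step sequence determining the height function is the same.

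For the pointwise inequality I use a strand-counting argument on the standard planar drawing of $M$, in which each pair is drawn as a semicircular arc in the upper half-plane and each pair of interleaving pairs crosses exactly once. A vertical line $\ell$ at $x = i+1/2$ is crossed by exactly $h_M(i)$ strand segments. Resolving a crossing is a purely local reconnection---no strand segment is created or destroyed---so the total count of crossings of $\ell$ is unchanged when we pass from $M$ to $M(v)$. In $M(v)$ those segments assemble into pair arcs between boundary points together with possibly closed loops; a pair arc whose endpoints lie on opposite sides of $\ell$ crosses $\ell$ an odd number (hence at least one) of times, while every same-side pair arc and every loop crosses $\ell$ an even number (possibly zero) of times. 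Summing yields $h_M(i) \geq h_{M(v)}(i)$, with equality at this $i$ iff each opposite-side pair arc crosses $\ell$ exactly once and no loop crosses $\ell$.

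For strictness when $v \neq \mathbf{1}$, pick any crossing $x = (a,b,c,d)$ with $v_x = 0$. In the standard drawing each individual strand is $x$-monotone, and at a $v=1$ resolution the $x$-component of the tangent keeps its sign through the crossing; hence every arc of $M(\mathbf{1})$ is $x$-monotone and there are no loops, which re-derives the equality for $v = \mathbf{1}$. At a $v_x = 0$ crossing the $x$-component of the tangent flips sign: the arc through the $a$- and $b$-sides acquires a local maximum of its $x$-coordinate at the crossing's $x$-coordinate $X_x$, and the arc through the $c$- and $d$-sides a local minimum. Since $X_x$ lies strictly between $b$ and $c$, some half-integer line $i + 1/2$ with $b \leq i \leq c-1$ sits near this extremum and is crossed at least one extra time by the associated arc (or by a loop, if the U-turn extensions close up), forcing $h_{M(v)}(i) < h_M(i)$.

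The most delicate step is the last one: one has to argue that the local non-monotonicity near $X_x$ is witnessed by an integer half-line, rather than hiding in an arbitrarily small sub-unit wiggle. A cleaner alternative I would try is to integrate rather than localize---compare the total areas $\sum_i h_{M(v)}(i) = \sum_{(j,k) \in M(v)} (k-j)$, showing via the horizontal arclength identity $\int |dx| \geq |k-j|$ that every $v_x = 0$ crossing forces some arc's $\int |dx|$ to strictly exceed its endpoint displacement and hence strictly decreases the total area. Combined with the pointwise inequality $h_{M(v)} \leq h_M$ from the second paragraph, a strict sum inequality immediately produces a position where the pointwise inequality is strict, giving $M(v) < M(\mathbf{1})$.
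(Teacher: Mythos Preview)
Your proof of the inequality $M(v) \leq M(\mathbf{1})$ is essentially the paper's argument: both draw $M$ with $x$-monotone strands and count intersections with a vertical line $x = k+\tfrac12$, noting that each arc of $M(v)$ whose endpoints straddle the line must use at least one of the $h_M(k)$ crossing segments. The paper says ``each such strand contains at least one segment $\xi_j$''; you say it via parity of intersection numbers. These are the same idea.

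For the strictness clause, you are right that your first (local) attempt has a gap: a $v_x=0$ crossing produces a local $x$-extremum at $X_x \in (b,c)$, but several crossings may cluster between consecutive integers, so no half-integer line need witness that particular turn. Your alternative via total horizontal arclength, however, is correct and closes the argument cleanly. Since the original strands are $x$-monotone, $\sum \int |dx|$ over the whole diagram equals $\sum_{(j,k)\in M}(k-j) = \sum_i h_M(i)$, and local reconnections preserve this total. Each arc of $M(v)$ from $j'$ to $k'$ contributes $\int |dx| \geq k'-j'$ with equality only if $x$-monotone, and each loop (necessarily built from segments of positive $x$-extent) contributes strictly positively while adding nothing to $\sum_i h_{M(v)}(i)$. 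A single $v_x=0$ crossing forces the arc or loop through it to fail $x$-monotonicity, so $\sum_i h_{M(v)}(i) < \sum_i h_M(i)$; combined with your pointwise bound this yields a position of strict inequality, hence $M(v) < M(\mathbf{1})$.

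It is worth noting that the paper's own proof establishes only the inequality $M(v) \leq M(\mathbf{1})$ and does not explicitly argue the equality clause; your arclength argument supplies that missing piece.
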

\begin{proof}
For a matching $M\in\TNC_n$ and $k=1,\ldots,2n$, let $l_k(M)$ denote the number of left endpoints among $1,2,\ldots,k$ in $M$, and let $r_k(M)$ denote the number of right endpoints among $1,2,\ldots,k$. Recall that the corresponding Dyck path for $M$ is the path that passes through $(k,l_k(M)-r_k(M))$ for $k=0,\ldots,2n$. Thus, to show $M'\leq M$, it suffices to show that $l_k(M')\leq l_k(M)$ for all $k$. 

Consider a planar drawing of $M$ inside the rectangle $[1,2n]\times [0,1]$, such that for a pair $(a,b)$ in $M$ where $a<b$, we connect the coordinate $(a,0)$ to $(b,0)$ via a curve $\gamma_{a,b}:[0,1]\rightarrow[a,b]\times [0,1]$ such that $\gamma_{a,b}(0)=(a,0)$, $\gamma_{a,b}(1)=(b,0)$ and that the projection of $\gamma_{a,b}$ onto the first coordinate is bijective, i.e. we can view this curve as going from left to right. The matching shown in \cref{fig:bijection-chen-3noncrossing-pair} is such an example. We view $M(v)$ as modifying and regrouping the curves in a small neighborhood around each crossing in $M$ (see the top of \cref{fig:resolution-crossings}). Note that the crossings of $M$ decompose the curves into segments. For each crossing $z$, there are four segments of curves touching $z$ in the planar drawing of $M$, from the direction of NW, SW, NE and SE.

For $k=1,\ldots,2n$, the vertical line $x=k+0.5$ intersects the strands in the drawing of $M$ with a left endpoint in $1,\ldots,k$ and a right endpoint in $k+1,\ldots,n$. There are $l_k(M)-r_k(M)$ number of such strands. Let $\xi_1,\ldots,\xi_h$ be the segments that intersect $x=k+0.5$, where $h=l_k(M)-r_k(M)$. Now, in any resolution of crossings $M(v)$, there are $l_k(M(v))-r_k(M(v))$ strands with a left endpoint in $1,\ldots,k$ and a right endpoint in $k+1,\ldots,n$. Each such strand needs to contain at least one segment of $\xi_1,\ldots,\xi_h$, meaning that $l_k(M(v))-r_k(M(v))\leq h$ so $l_k(M(v))\leq l_k(M)=l_k(M(\mathbf{1}))$. This means $M(v)\leq M(\mathbf{1})$ as desired.
\end{proof}

\subsection{$3$-noncrossing matchings and cactus networks}
\begin{prop}\label{prop:3-noncrossing}
Let $\tau\in\MM_n$ be a matching. The followings are equivalent:
\begin{enumerate}
\item $\tau\in\TNC_n$ is a $3$-noncrossing matching;
\item all interior regions of (any) strand diagram of $\tau$ have at least $4$ sides;
\item $\tau$ has a unique strand diagram;
\item there is a reduced cactus network $\Gamma$ whose medial pairing is $\tau$ such that $\Gamma$ does not have any triangular faces and all interior vertices of $\Gamma$ have degree at least $4$.
\end{enumerate}
\end{prop}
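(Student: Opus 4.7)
The plan is to prove the four-way equivalence via three pairwise equivalences: $(1)\Leftrightarrow(2)$, $(2)\Leftrightarrow(3)$, and $(2)\Leftrightarrow(4)$. Each step translates local data about regions of a reduced strand diagram into either a global property of the matching $\tau$ or structural data of an associated reduced cactus graph $\Gamma$.

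First I would establish $(1)\Leftrightarrow(2)$. For $(2)\Rightarrow(1)$, the three sides of any triangular interior region are segments of three distinct strands (otherwise a bigon would arise, which is forbidden in a reduced strand diagram), and these three strands meet pairwise at the vertices of the triangle, hence pairwise cross. For $(1)\Rightarrow(2)$ I would argue the contrapositive: suppose $\tau$ contains three pairwise-crossing strands $A,B,C$ with crossings $P_{AB},P_{AC},P_{BC}$ bounding a region $T$ in the plane. Among all such triples, choose one that minimizes the number of strand segments passing through the interior of $T$. If some strand $D$ enters $T$, it must exit through a different side (otherwise $D$ would meet one side twice, producing a bigon), yielding a strictly smaller triangle on three pairwise-crossing strands and contradicting minimality. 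Hence $T$ is empty and is a triangular interior region of the strand diagram.

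For $(2)\Leftrightarrow(3)$, I would invoke \cref{prop:Y-delta}: any two reduced cactus graphs with the same medial pairing differ by a sequence of $Y$-$\Delta$ moves, and on the medial side each $Y$-$\Delta$ move is a braid (Yang--Baxter) move that flips a triangular interior region of the strand diagram. Thus the strand diagram of $\tau$ is unique up to homotopy iff no such flip can be applied iff no triangular interior region is present. For $(2)\Leftrightarrow(4)$, let $\Gamma$ be a reduced cactus network with medial pairing $\tau$ (which exists by \cref{prop:Y-delta}). Using the bipartite black/white coloring recalled in \cref{sub:medial}, interior white regions of the strand diagram are in bijection with interior vertices of $\Gamma$, with number of sides equal to the vertex degree, while black regions are in bijection with faces of $\Gamma$, with number of sides equal to the face size. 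Therefore (2) is equivalent to the statement that $\Gamma$ has no triangular face and every interior vertex has degree at least $4$, which is exactly (4).

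The main obstacle I anticipate is the innermost-triangle argument in $(1)\Rightarrow(2)$: one must verify carefully that a strand $D$ crossing into $T$ necessarily yields a strictly smaller nested triangle of pairwise-crossing strands. The subtle point is that $D$ could interact with $T$ in complicated ways and may cross more than two of its sides, but the no-bigon condition (automatic from reducedness) ensures that the innermost entry and exit of $D$ lie on two distinct sides, giving the desired descent. The remaining pieces reduce to standard applications of \cref{prop:Y-delta} and the medial-graph dictionary.
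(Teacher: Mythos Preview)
Your approach is essentially the paper's, which proves the cycle $(1)\Rightarrow(4)\Rightarrow(3)\Rightarrow(2)\Rightarrow(1)$ using the same three ingredients you list: the innermost-triangle argument for pairwise-crossing strands, the $Y$--$\Delta$/braid-move connection via \cref{prop:Y-delta}, and the dictionary between interior regions of the medial diagram and interior vertices/faces of $\Gamma$.

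Two small corrections. First, your labels in the $(1)\Leftrightarrow(2)$ paragraph are swapped: the argument ``a triangular region gives three pairwise-crossing strands'' is $\neg(2)\Rightarrow\neg(1)$, i.e.\ $(1)\Rightarrow(2)$, and your innermost-triangle argument is $\neg(1)\Rightarrow\neg(2)$, i.e.\ $(2)\Rightarrow(1)$. Second, in your $(4)\Rightarrow(2)$ direction there is a quantifier gap: condition (4) only furnishes \emph{one} $\Gamma$ whose strand diagram is triangle-free, while (2) asserts this for \emph{every} strand diagram. You should close this by observing that no braid move applies to a triangle-free diagram, so by \cref{prop:Y-delta} that diagram is the unique one---this is exactly the step the paper isolates as $(4)\Rightarrow(3)$.
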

\begin{proof}
We prove (1)$\Rightarrow$(4)$\Rightarrow$(3)$\Rightarrow$(2)$\Rightarrow$(1).

(1)$\Rightarrow$(4). Assume $\tau$ is a $3$-noncrossing matching. Take any strand diagram / medial graph of $\tau$ and recover a reduced cactus network $\Gamma$ as described in \cref{sec:background}. As $\Gamma$ is reduced, no interior vertex has degree $2$. If $\Gamma$ has a triangular face with edges $e_1,e_2,e_3$, then this medial graph $G(\tau)$ has a triangular face given by $t_{e_1},t_{e_2},t_{e_3}$; if $\Gamma$ has an interior vertex of degree $3$, then it comes from a triangular face in the strand diagram. Both cases imply that there exists a triangular face in the strand diagram of $\tau$, which corresponds to three strands that cross pairwise, contradicting $\tau$ being $3$-noncrossing.

(4)$\Rightarrow$(3). If $\tau$ has two (or more) strand diagrams, then any corresponding reduced cactus network admits a $Y-\Delta$ move, by \cref{prop:Y-delta}. But $\Gamma$ does not have any interior vertex of degree $3$ or a triangular face. This is a contradiction.

(3)$\Rightarrow$(2). If any strand diagram of $\tau$ has a region with $3$ sides, i.e. a triangular face, then we can apply a local move of the form: \begin{tikzpicture}[scale=0.1]\draw(0,0)--(4,4);\draw(0,1)--(4,1);\draw(0,4)--(4,0);\end{tikzpicture} $\leftrightarrow$ \begin{tikzpicture}[scale=0.1]\draw(0,0)--(4,4);\draw(0,3)--(4,3);\draw(0,4)--(4,0);\end{tikzpicture}. This implies that $\tau$ has more than one strand diagram.

(2)$\Rightarrow$(1). If $\tau$ is not $3$-noncrossing, then in a strand diagram of $\tau$, there are three strands that cross pairwise, forming an interior triangle. This triangle may not be a face, but any other strand that passes through this triangle creates a smaller triangle.  Thus any strand diagram of $\tau$ contains an interior triangular face. 
\end{proof}
For a $3$-noncrossing matching $\xi\in\TNC$, let $\Gamma(\xi)$ be the corresponding reduced cactus network, which is unique by \cref{prop:3-noncrossing}.

\section{The Grove algebra $G_n$ and the Bush basis $\{B_{\xi}\}$ for $G_{n,2}$}\label{sec:circular-basis}
Let $\II_n\subset\Z[L_{\sigma}\:|\:\sigma\in\NP_n]$ be the ideal of polynomials in the variables $\{L_{\sigma}\}_{\sigma\in\NP_n}$ that vanish on all electrical networks $\Gamma$, or equivalently, on all cactus networks $\Gamma$. Define the \textit{grove algebra} (over $\Z$) to be \[G_n:=\Z[L_{\sigma}\:|\: \sigma\in\NP_n]/\II_n\] which is a graded algebra by the degree of the polynomials, where we assign each variable $L_{\sigma}$ to have degree $1$. Let the $d$-th graded piece of $G_n$ be $G_{n,d}$.  We will describe $G_n \otimes \C$ in more algebro-geometric terms in \cref{sec:geometry}.

In this section, we introduce a basis $\{B_\xi\:|\: \xi\in\TNC_n\}$ of $G_{n,2}$ indexed by 3-noncrossing matchings that exhibits cyclic symmetry. We first define $B_\xi(\Gamma)$ for any cactus network $\Gamma$ and then lift them to $G_{n,2}$.

\subsection{Definition of the Bush basis $\{B_{\xi}\}$}
\begin{defin}
A \textit{double grove} $H$ on boundary vertices $\bar1,\bar2,\ldots,\bar n$ is a cactus graph where each edge has multiplicity $1$ or $2$ (which we call a \textit{double edge}). 
\end{defin}

Let $\Z\TNC_n$ denote the free module with basis elements $\TNC_n$.  For a double grove $H$, we define an invariant $\alpha(H)\in\Z\TNC_n$ by the following procedure. 
\begin{defin}\label{def:alpha-H}
For a cactus graph $H$ with multiplicity (i.e. edges of $H$ are allowed to have multiplicities being positive integers), $\alpha(H)$ is defined recursively as follows:
\begin{enumerate}
\item[(0)] If some edge of $H$ has multiplicity $\geq3$, then $\alpha(H)=0$.
\item If all edges of $H$ have multiplicity $1$ and the medial pairing $\tau(H)$ is a $3$-noncrossing matching $\xi$, i.e. $H$ is reduced and does not have any interior vertex of degree equal to $3$ or any triangular faces (see \cref{prop:3-noncrossing}), then $\alpha(H)=\xi$.
\item If $H$ has loops, then $\alpha(H)=0$.
\item If $H$ has a double edge $e$, then $\alpha(H)=\alpha(H')$, where $H'$ is obtained from $H$ by contracting $e$.
\item If $H$ has edges $e_1,\ldots,e_r$ of multiplicity $1$ between the same pair of vertices, then $\alpha(H)=0$ if $r\geq3$, and $\alpha(H)=2\alpha(H')$ where $H'$ is obtained from $H$ by contracting $e_1$ (and removing $e_2$) if $r=2$. 
\item If $H$ has an interior leaf or an interior vertex with degree $0$, then $\alpha(H)=0$.
\item If $H$ has an interior vertex $v$ of degree $2$ with distinct neighbors, then $\alpha(H)=2\alpha(H')$ where $H'$ is obtained from $H$ by removing $v$.
\item If $H$ has an interior vertex $v$ of degree $3$ with distinct neighbors $v_1,v_2,v_3$, then $\alpha(H)=\alpha(H_1)+\alpha(H_2)+\alpha(H_3)$ where $H_i$ is obtained from $H$ by removing $v$ together with the edges incident to it and adding the edge $(v_{i+1},v_{i+2})$ where the indices are taken modulo $3$.
\item If $H$ has a triangular face (not necessarily in the interior) with vertices $v_1,v_2,v_3$, then $\alpha(H)=\alpha(H_1)+\alpha(H_2)+\alpha(H_3)$ where $H_i$ is obtained from $H$ by identifying $v_{i}$ with $v_{i+1}$ and replacing the three edges by a single edge between $v_i=v_{i+1}$ and $v_{i+2}$.
\end{enumerate}
For $\xi\in\TNC_n$, let $\alpha(H)_{\xi}\in\mathbb{Z}$ denote the coefficient of $\xi$ in $\alpha(H)$.
\end{defin}
A visualization of some moves in \cref{def:alpha-H} is shown in \cref{fig:alpha-H-steps}.
\begin{figure}[h!]
\centering
\begin{tikzpicture}[scale=0.4]
\def\x{7}
\draw (0,0) circle (2);
\node at (-1,0) {$\bullet$};
\node at (1,0) {$\bullet$};
\draw(-1,0)--(-1.6,0);
\draw(-1,0)--(-1.5196,0.3);
\draw(-1,0)--(-1.5196,-0.3);
\draw(1,0)--(1.6,0);
\draw(1,0)--(1.5196,0.3);
\draw(1,0)--(1.5196,-0.3);
\draw[bend left=30] (-1,0) to (1,0);
\draw[bend right=30] (-1,0) to (1,0);

\draw (\x,0) circle (2);
\node at (\x,0) {$\bullet$};
\draw(\x,0)--(\x-0.6,0);
\draw(\x,0)--(\x-0.5196,0.3);
\draw(\x,0)--(\x-0.5196,-0.3);
\draw(\x,0)--(\x+0.6,0);
\draw(\x,0)--(\x+0.5196,0.3);
\draw(\x,0)--(\x+0.5196,-0.3);

\node at (0.5*\x,0) {$\rightarrow\ 2\cdot$};
\node at (0.5*\x,-3) {Move (4)};
\end{tikzpicture}
\qquad
\begin{tikzpicture}[scale=0.4]
\def\x{7}
\draw (0,0) circle (2);
\node at (-1,0) {$\bullet$};
\node at (1,0) {$\bullet$};
\draw(-1,0)--(-1.6,0);
\draw(-1,0)--(-1.5196,0.3);
\draw(-1,0)--(-1.5196,-0.3);
\draw(1,0)--(1.6,0);
\draw(1,0)--(1.5196,0.3);
\draw(1,0)--(1.5196,-0.3);
\node at (0,0) {$\bullet$};
\draw(-1,0)--(1,0);

\draw (\x,0) circle (2);
\node at (\x-1,0) {$\bullet$};
\node at (\x+1,0) {$\bullet$};
\draw(\x-1,0)--(\x-1.6,0);
\draw(\x-1,0)--(\x-1.5196,0.3);
\draw(\x-1,0)--(\x-1.5196,-0.3);
\draw(\x+1,0)--(\x+1.6,0);
\draw(\x+1,0)--(\x+1.5196,0.3);
\draw(\x+1,0)--(\x+1.5196,-0.3);

\node at (0.5*\x,0) {$\rightarrow\ 2\cdot$};
\node at (0.5*\x,-3) {Move (6)};
\end{tikzpicture}

\

\begin{tikzpicture}[scale=0.4]
\def\x{6}
\node at (0,0) {$\bullet$};
\node[right] at (0,1) {$v_1$};
\draw (0,0) circle (2);
\node at (0,1) {$\bullet$};
\node at (0.866,-0.5) {$\bullet$};
\node[below] at (0.866,-0.5) {$v_3$};
\node at (-0.866,-0.5) {$\bullet$};
\node[below] at (-0.866,-0.5) {$v_2$};
\draw(0,0)--(0,1);
\draw(0.866,-0.5)--(0,0)--(-0.866,-0.5);

\node[right] at (\x,1) {$v_1$};
\draw (\x,0) circle (2);
\node at (\x,1) {$\bullet$};
\node at (\x+0.866,-0.5) {$\bullet$};
\node[below] at (\x+0.866,-0.5) {$v_3$};
\node at (\x-0.866,-0.5) {$\bullet$};
\node[below] at (\x-0.866,-0.5) {$v_2$};
\draw(\x-0.866,-0.5)--(\x,1);

\node[right] at (2*\x,1) {$v_1$};
\draw (2*\x,0) circle (2);
\node at (2*\x,1) {$\bullet$};
\node at (2*\x+0.866,-0.5) {$\bullet$};
\node[below] at (2*\x+0.866,-0.5) {$v_3$};
\node at (2*\x-0.866,-0.5) {$\bullet$};
\node[below] at (2*\x-0.866,-0.5) {$v_2$};
\draw(2*\x+0.866,-0.5)--(2*\x,1);

\node[right] at (3*\x,1) {$v_1$};
\draw (3*\x,0) circle (2);
\node at (3*\x,1) {$\bullet$};
\node at (3*\x+0.866,-0.5) {$\bullet$};
\node[below] at (3*\x+0.866,-0.5) {$v_3$};
\node at (3*\x-0.866,-0.5) {$\bullet$};
\node[below] at (3*\x-0.866,-0.5) {$v_2$};
\draw(3*\x+0.866,-0.5)--(3*\x-0.866,-0.5);

\node at (0.5*\x,0) {$\rightarrow$};
\node at (1.5*\x,0) {$+$};
\node at (2.5*\x,0) {$+$};
\node at (1.5*\x,-3) {Move (7)};
\end{tikzpicture}

\

\begin{tikzpicture}[scale=0.4]
\def\x{6}
\node[right] at (0,1) {$v_1$};
\draw (0,0) circle (2);
\node at (0,1) {$\bullet$};
\node at (0.866,-0.5) {$\bullet$};
\node[below] at (0.866,-0.5) {$v_3$};
\node at (-0.866,-0.5) {$\bullet$};
\node[below] at (-0.866,-0.5) {$v_2$};
\draw(0,1)--(0.866,-0.5)--(-0.866,-0.5)--(0,1);

\node at (\x-0.433,0.25) {$\bullet$};
\node[above] at (\x-0.433,0.25) {$v_1{=}v_2$};
\draw (\x,0) circle (2);
\node at (\x+0.866,-0.5) {$\bullet$};
\node[below] at (\x+0.866,-0.5) {$v_3$};
\draw(\x-0.433,0.25)--(\x+0.866,-0.5);

\node at (2*\x+0.433,0.25) {$\bullet$};
\node[above] at (2*\x+0.433,0.25) {$v_1{=}v_3$};
\draw (2*\x,0) circle (2);
\node at (2*\x-0.866,-0.5) {$\bullet$};
\node[below] at (2*\x-0.866,-0.5) {$v_2$};
\draw(2*\x+0.433,0.25)--(2*\x-0.866,-0.5);

\draw(3*\x,0) circle (2);
\node at (3*\x,1) {$\bullet$};
\node[right] at (3*\x,1) {$v_1$};
\node at (3*\x,-0.5) {$\bullet$};
\node[below] at (3*\x,-0.5) {$v_2{=}v_3$};
\draw(3*\x,1)--(3*\x,-0.5);

\node at (0.5*\x,0) {$\rightarrow$};
\node at (1.5*\x,0) {$+$};
\node at (2.5*\x,0) {$+$};
\node at (1.5*\x,-3) {Move (8)};
\end{tikzpicture}
\caption{Moves (4), (6), (7) and (8) of \cref{def:alpha-H}}
\label{fig:alpha-H-steps}
\end{figure}
It is clear that the procedure in \cref{def:alpha-H} terminates, as either the number of vertices or the number of edges strictly decreases after step. However, it is not immediately clear that $\alpha(H)$ is well-defined, i.e. the result of the recursion is independent of choices of moves. we show in \cref{lem:alpha-H-well-defined} that indeed $\alpha(H)$ is well-defined.  

\begin{lemma}\label{lem:alpha-H-well-defined}
For a double grove $H$, $\alpha(H)$ is well-defined. In other words, $\alpha(H)$ does not depend on the order that we apply the moves in \cref{def:alpha-H}.
\end{lemma}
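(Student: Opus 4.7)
The strategy is to view \cref{def:alpha-H} as a terminating rewriting system on the free $\Z$-module spanned by cactus graphs with edge multiplicities, and then apply Newman's lemma: a terminating rewriting system is confluent iff it is locally confluent. Termination is immediate, since each move either returns a terminal value (moves (0), (1), (2), (5)) or strictly decreases the pair $(|V(H)|, |E(H)|)$ in lexicographic order. Hence it suffices to establish local confluence: whenever two distinct applicable moves $\mu_1,\mu_2$ produce values $\alpha_1,\alpha_2 \in \Z\TNC_n$ after one step each, we must show that continued reduction of $\alpha_1$ and $\alpha_2$ yields the same element.

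The verification proceeds by cataloguing the finitely many local ``critical pair'' configurations in which two moves can interact. When the supports of $\mu_1$ and $\mu_2$ are disjoint --- for instance two double edges in different parts of $H$, or a degree-$2$ interior vertex far from a triangular face --- the moves commute on the nose, because each leaves the local patch of the other untouched; applying them in either order yields the same intermediate graph, and confluence is automatic. The substantive cases occur when the two moves share a vertex or an edge. These fall into a handful of patterns: a double edge sharing an endpoint with a degree-$2$ or degree-$3$ interior vertex (move (4) against moves (6)/(7)); a double edge sharing an endpoint with a vertex on a triangular face (move (4) against move (8)); an interior degree-$3$ vertex sitting at the corner of a triangular face (move (7) against move (8)); and two triangular faces sharing an edge (move (8) against itself). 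For each pattern I would write down the local patch, apply each of the two moves, and then show by further reduction that the results agree; in most instances the match is visible as an explicit bijection between the terms produced on the two sides, possibly after cleaning up loops, parallel edges, or low-degree interior vertices using moves (3)--(6).

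The main obstacle will be the interactions of moves (7) and (8), since each introduces a three-term sum, so overlapping applications can produce nine-term sums whose equality is not visually obvious. A representative case is two triangular faces glued along an edge: applying move (8) to one triangle yields three graphs, two of which still contain the second triangle and each expand into three further terms; this nine-term sum must be matched termwise with the sum arising from the opposite order. A similar bookkeeping challenge arises when a degree-$3$ interior vertex $v$ sits at a corner of a triangular face: the three graphs from move (7) applied at $v$ must match the contributions of move (8) applied to the triangle, after intermediate reductions by (3), (4), and (6) to absorb the double edges, loops, and degree-$2$ vertices that are created. In every such critical pair the matching is exact as multisets of cactus graphs (not merely up to higher-level reductions), which is the key combinatorial fact that makes the diamond lemma go through.

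The cleanest organization of the argument is therefore to reduce well-definedness to a finite list of local patches, verify the diamond condition patch-by-patch, and invoke Newman's lemma to conclude. Once this is done, $\alpha(H)$ is independent of the reduction order, completing the proof of the lemma.
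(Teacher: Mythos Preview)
Your approach is exactly the paper's: invoke Newman's diamond lemma, note termination, and verify local confluence on a finite list of overlapping critical pairs, with the hardest work concentrated in the interactions among moves (7) and (8). The paper carries out precisely the three-term-versus-three-term bookkeeping you describe for (8)-(8) sharing an edge and for (7)-(8) at a shared vertex.

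One omission in your enumeration: you do not list the case of two \emph{adjacent} interior degree-$3$ vertices, i.e.\ move (7) against move (7). This is in fact one of the three cases the paper checks explicitly (its Case~1), since applying (7) at $v$ alters the degree of its neighbor $v'$ and vice versa; the confluence there requires applying (7), (7), then (6) on each side and matching a six-term sum. Your method handles it without difficulty once you notice it, but as written your list of ``substantive cases'' is incomplete.
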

\begin{proof}
By Newman's lemma (also commonly known as the diamond lemma) \cite{newman1942theories}, it suffices to show that for a cactus graph $H$ with multiplicities, if there are two reduction moves $m$ and $m'$ that can be applied to $H$ resulting in $\alpha(H)$ being assigned to $A=\sum c_i\alpha(H_i)$ and $A'=\sum c_i'\alpha(H_i')$, then there are sequences of reduction moves that can be applied to $A$ and $A'$ respectively, such that the end results are the same. Depending on which types of moves $m$ and $m'$ are and which edges are involved in these moves, the number of cases to check is humongous, but most of them are very straightforward, including the situations where $m$ and $m'$ involve disjoint sets of edges and any combinations of moves (0) - (6).  We check a few critical cases in this proof.

Case 1: $m$ and $m'$ are moves (7) applied to vertices $v$ and $v'$ respectively. Notice that if $v$ and $v'$ are not adjacent, then moves (7) at $v$ and $v'$ commute. So it suffices to look at the following local configuration in $H$, and apply the calculation in \cref{fig:case1-of-alpha-H-well-defined}, where we apply moves (7), (7), (6) in \cref{def:alpha-H} after move $m$. We see that the same result can be achieved if we apply move $m'$ first, as the end result in \cref{fig:case1-of-alpha-H-well-defined} is symmetric horizontally. 
\begin{figure}[h!]
\centering
\begin{tikzpicture}[scale=0.3]
\def\x{7}
\def\y{4.5}
\def\h{5}
\def\m{8}
\def\eps{1}
\draw (0,0) circle (2);
\node at (-1,-1) {$\bullet$};
\node at (1,-1) {$\bullet$};
\node at (-1,1) {$\bullet$};
\node at (1,1) {$\bullet$};
\node at (-0.5,0) {$\bullet$};
\node at (0.5,0) {$\bullet$};
\draw(-1,1)--(-0.5,0)--(-1,-1);
\draw(1,1)--(0.5,0)--(1,-1);
\draw(-0.5,0)--(0.5,0);
\node[left] at (-0.5,0) {$v$};
\node[right] at (0.5,0) {$v'$};

\node at (\m-1,-1) {$\bullet$};
\node at (\m-1,1) {$\bullet$};
\node at (\m+1,-1) {$\bullet$};
\node at (\m+1,1) {$\bullet$};
\node at (\m+0.5,0) {$\bullet$};
\draw(\m-1,1)--(\m+0.5,0)--(\m+1,1);
\draw(\m+0.5,0)--(\m+1,-1);
\node[above] at (0.5*\m,0) {$m$};
\draw[->](3,0)--(\m-3,0);
\draw (\m,0) circle (2);

\node at (\x+\m-1,-1) {$\bullet$};
\node at (\x+\m-1,1) {$\bullet$};
\node at (\x+\m+1,-1) {$\bullet$};
\node at (\x+\m+1,1) {$\bullet$};
\node at (\x+\m+0.5,0) {$\bullet$};
\draw(\x+\m-1,-1)--(\x+\m+0.5,0)--(\x+\m+1,1);
\draw(\x+\m+0.5,0)--(\x+\m+1,-1);
\node at (\m+0.5*\x,0) {$+$};
\draw (\m+\x,0) circle (2);

\node at (2*\x+\m-1,-1) {$\bullet$};
\node at (2*\x+\m-1,1) {$\bullet$};
\node at (2*\x+\m+1,-1) {$\bullet$};
\node at (2*\x+\m+1,1) {$\bullet$};
\node at (2*\x+\m+0.5,0) {$\bullet$};
\draw(2*\x+\m-1,-1)--(2*\x+\m-1,1);
\draw(2*\x+\m+1,1)--(2*\x+\m+0.5,0)--(2*\x+\m+1,-1);
\node at (\m+1.5*\x,0) {$+$};
\draw (\m+2*\x,0) circle (2);

\node at (\m-1,-1-\h) {$\bullet$};
\node at (\m-1,1-\h) {$\bullet$};
\node at (\m+1,-1-\h) {$\bullet$};
\node at (\m+1,1-\h) {$\bullet$};
\draw(\m-1,1-\h)--(\m+1,1-\h);
\node[above] at (0.5*\m-1.5,-\h) {(7),(7),(6)};
\draw[->](0,-\h)--(\m-3,-\h);

\node at (\m-1+\y,-1-\h) {$\bullet$};
\node at (\m-1+\y,1-\h) {$\bullet$};
\node at (\m+1+\y,-1-\h) {$\bullet$};
\node at (\m+1+\y,1-\h) {$\bullet$};
\draw(\m-1+\y,1-\h)--(\m+1+\y,-1-\h);
\node at (\m+0.5*\y,-\h) {$+$};

\node at (\m-1+2*\y,-1-\h) {$\bullet$};
\node at (\m-1+2*\y,1-\h) {$\bullet$};
\node at (\m+1+2*\y,-1-\h) {$\bullet$};
\node at (\m+1+2*\y,1-\h) {$\bullet$};
\draw(\m+1+2*\y,1-\h)--(\m+1+2*\y,-1-\h);
\node at (\m+1.5*\y,-\h) {$+$};

\node at (\m-1+3*\y,-1-\h) {$\bullet$};
\node at (\m-1+3*\y,1-\h) {$\bullet$};
\node at (\m+1+3*\y,-1-\h) {$\bullet$};
\node at (\m+1+3*\y,1-\h) {$\bullet$};
\draw(\m+1+3*\y,1-\h)--(\m-1+3*\y,-1-\h);
\node at (\m+2.5*\y,-\h) {$+$};

\node at (\m-1+4*\y,-1-\h) {$\bullet$};
\node at (\m-1+4*\y,1-\h) {$\bullet$};
\node at (\m+1+4*\y,-1-\h) {$\bullet$};
\node at (\m+1+4*\y,1-\h) {$\bullet$};
\draw(\m-1+4*\y,-1-\h)--(\m+1+4*\y,-1-\h);
\node at (\m+3.5*\y,-\h) {$+$};

\node at (\m-1+5*\y,-1-\h) {$\bullet$};
\node at (\m-1+5*\y,1-\h) {$\bullet$};
\node at (\m+1+5*\y,-1-\h) {$\bullet$};
\node at (\m+1+5*\y,1-\h) {$\bullet$};
\draw(\m+1+5*\y,-1-\h)--(\m+1+5*\y,1-\h);
\node at (\m+4.5*\y,-\h) {$+$};

\node at (\m-1+6*\y+\eps,-1-\h) {$\bullet$};
\node at (\m-1+6*\y+\eps,1-\h) {$\bullet$};
\node at (\m+1+6*\y+\eps,-1-\h) {$\bullet$};
\node at (\m+1+6*\y+\eps,1-\h) {$\bullet$};
\draw(\m-1+6*\y+\eps,-1-\h)--(\m-1+6*\y+\eps,1-\h);
\node at (\m+5.5*\y+0.5*\eps,-\h) {$+\ 2\cdot$};

\node at (\m-1,-1-2*\h) {$\bullet$};
\node at (\m-1,1-2*\h) {$\bullet$};
\node at (\m+1,-1-2*\h) {$\bullet$};
\node at (\m+1,1-2*\h) {$\bullet$};
\draw(\m-1,1-2*\h)--(\m+1,1-2*\h);
\node[above] at (0.5*\m,-2*\h) {$=$};

\node at (\m-1+\y,-1-2*\h) {$\bullet$};
\node at (\m-1+\y,1-2*\h) {$\bullet$};
\node at (\m+1+\y,-1-2*\h) {$\bullet$};
\node at (\m+1+\y,1-2*\h) {$\bullet$};
\draw(\m-1+\y,-1-2*\h)--(\m+1+\y,-1-2*\h);
\node at (\m+0.5*\y,-2*\h) {$+$};

\node at (\m-1+2*\y,-1-2*\h) {$\bullet$};
\node at (\m-1+2*\y,1-2*\h) {$\bullet$};
\node at (\m+1+2*\y,-1-2*\h) {$\bullet$};
\node at (\m+1+2*\y,1-2*\h) {$\bullet$};
\draw(\m-1+2*\y,1-2*\h)--(\m+1+2*\y,-1-2*\h);
\node at (\m+1.5*\y,-2*\h) {$+$};

\node at (\m-1+3*\y,-1-2*\h) {$\bullet$};
\node at (\m-1+3*\y,1-2*\h) {$\bullet$};
\node at (\m+1+3*\y,-1-2*\h) {$\bullet$};
\node at (\m+1+3*\y,1-2*\h) {$\bullet$};
\draw(\m+1+3*\y,1-2*\h)--(\m-1+3*\y,-1-2*\h);
\node at (\m+2.5*\y,-2*\h) {$+$};

\node at (\m-1+4*\y+\eps,-1-2*\h) {$\bullet$};
\node at (\m-1+4*\y+\eps,1-2*\h) {$\bullet$};
\node at (\m+1+4*\y+\eps,-1-2*\h) {$\bullet$};
\node at (\m+1+4*\y+\eps,1-2*\h) {$\bullet$};
\draw(\m-1+4*\y+\eps,1-2*\h)--(\m-1+4*\y+\eps,-1-2*\h);
\node at (\m+3.5*\y+0.5*\eps,-2*\h) {$+\ 2\cdot$};

\node at (\m-1+5*\y+2*\eps,-1-2*\h) {$\bullet$};
\node at (\m-1+5*\y+2*\eps,1-2*\h) {$\bullet$};
\node at (\m+1+5*\y+2*\eps,-1-2*\h) {$\bullet$};
\node at (\m+1+5*\y+2*\eps,1-2*\h) {$\bullet$};
\draw(\m+1+5*\y+2*\eps,1-2*\h)--(\m+1+5*\y+2*\eps,-1-2*\h);
\node at (\m+4.5*\y+1.5*\eps,-2*\h) {$+\ 2\cdot$};
\end{tikzpicture}
\caption{Case 1 of \cref{lem:alpha-H-well-defined}}
\label{fig:case1-of-alpha-H-well-defined}
\end{figure}

Case 2: $m$ and $m'$ are moves (8) applied to the faces $v,x,y$ and faces $v',x,y$ respectively. We have a similar calculation shown in \cref{fig:case2-of-alpha-H-well-defined}. Note that $v,x,y,v'$ may be incident to other edges and we will not draw these edges for simplicity of the visualization. We conclude that applying $m$ first or $m'$ first can result in the same configuration via symmetry.
\begin{figure}[h!]
\centering
\begin{tikzpicture}[scale=0.3]
\def\x{8}
\def\y{4.5}
\def\h{5}
\def\m{9}
\def\eps{2}
\draw (0,0) circle (2.4);
\node at (-1,0) {$\bullet$};
\node at (1,0) {$\bullet$};
\node at (0,1) {$\bullet$};
\node at (0,-1) {$\bullet$};
\draw(-1,0)--(0,1)--(0,-1)--(-1,0);
\draw(0,1)--(1,0)--(0,-1);
\node[above] at (0,1) {$x$};
\node[right] at (1,0) {$v'$};
\node[below] at (0,-1) {$y$};
\node[left] at (-1,0) {$v$};
\draw[->](3,0)--(\m-3,0);
\node[above] at (0.5*\m,0) {$m$};

\node at (\m-1,1) {$\bullet$};
\node at (\m+1,0) {$\bullet$};
\node at (\m+0,-1) {$\bullet$};
\draw(\m-1,1)--(\m+1,0)--(\m+0,-1)--(\m-1,1);
\node[above] at (\m-1,1) {$v{=}x$};
\node[right] at (\m+1,0) {$v'$};
\node[below] at (\m,-1) {$y$};

\node at (\m-1+\x,-1) {$\bullet$};
\node at (\m+\x,1) {$\bullet$};
\node at (\m+1+\x,0) {$\bullet$};
\node[below] at (\m-1+\x,-1) {$v{=}y$};
\draw(\m-1+\x,-1)--(\m+\x,1)--(\m+1+\x,0)--(\m-1+\x,-1);
\node[above] at (\m+\x,1) {$v'$};
\node[right] at (\m+1+\x,0) {$x$};
\node at (\m+0.5*\x,0) {$+$};

\node at (\m-1.8+2*\x,0) {$\bullet$};
\node at (\m+2*\x,0) {$\bullet$};
\node at (\m+1.8+2*\x,0) {$\bullet$};
\node[above] at (\m-1.8+2*\x,0) {$v$};
\node[below] at (\m+2*\x,0) {$x{=}y$};
\node[above] at (\m+1.8+2*\x,0) {$v'$};
\draw(\m-1.8+2*\x,0)--(\m+2*\x,0);
\draw[bend left=30](\m+2*\x,0) to (\m+1.8+2*\x,0);
\draw[bend right=30](\m+2*\x,0) to (\m+1.8+2*\x,0);
\node at (\m+1.5*\x,0) {$+$};

\node[above] at (0.5*\m-1.5,-\h) {(7),(7),(4)};
\draw[->](0,-\h)--(\m-3,-\h);

\node at (\m-1,-\h) {$\bullet$};
\node at (\m+1,-\h) {$\bullet$};
\node[above] at (\m-1,-\h) {$v,x,y$};
\node[below] at (\m+1,-\h) {$v'$};
\draw(\m-1,-\h)--(\m+1,-\h);

\node at (\m-1+\y,-\h) {$\bullet$};
\node at (\m+1+\y,-\h) {$\bullet$};
\node[above] at (\m-1+\y,-\h) {$v,x$};
\node[below] at (\m+1+\y,-\h) {$v',y$};
\draw(\m-1+\y,-\h)--(\m+1+\y,-\h);
\node at (\m+0.5*\y,-\h) {$+$};

\node at (\m+2*\y,1-\h) {$\bullet$};
\node at (\m+2*\y,-1-\h) {$\bullet$};
\node[above] at (\m+2*\y,1-\h)  {$v,x,v'$};
\node[below] at (\m+2*\y,-1-\h) {$y$};
\draw(\m+2*\y,1-\h) --(\m+2*\y,-1-\h);
\node at (\m+1.5*\y,-\h) {$+$};

\node at (\m-1+3*\y,-\h) {$\bullet$};
\node at (\m+1+3*\y,-\h) {$\bullet$};
\node[above] at (\m-1+3*\y,-\h) {$v,x,y$};
\node[below] at (\m+1+3*\y,-\h) {$v'$};
\draw(\m-1+3*\y,-\h)--(\m+1+3*\y,-\h);
\node at (\m+2.5*\y,-\h) {$+$};

\node at (\m-1+4*\y,-\h) {$\bullet$};
\node at (\m+1+4*\y,-\h) {$\bullet$};
\node[above] at (\m-1+4*\y,-\h) {$v,y$};
\node[below] at (\m+1+4*\y,-\h) {$v',x$};
\draw(\m-1+4*\y,-\h)--(\m+1+4*\y,-\h);
\node at (\m+3.5*\y,-\h) {$+$};

\node at (\m+5*\y,1-\h) {$\bullet$};
\node at (\m+5*\y,-1-\h) {$\bullet$};
\node[above] at (\m+5*\y,1-\h)  {$x$};
\node[below] at (\m+5*\y,-1-\h) {$v,y,v'$};
\draw(\m+5*\y,1-\h) --(\m+5*\y,-1-\h);
\node at (\m+4.5*\y,-\h) {$+$};

\node at (\m-1+6*\y+\eps,-\h) {$\bullet$};
\node at (\m+1+6*\y+\eps,-\h) {$\bullet$};
\node[above] at (\m-1+6*\y+\eps,-\h) {$v$};
\node[below] at (\m+1+6*\y+\eps,-\h) {$v',x,y$};
\draw(\m-1+6*\y+\eps,-\h)--(\m+1+6*\y+\eps,-\h);
\node at (\m+5.5*\y+0.5*\eps,-\h) {$+\ 2\cdot$};

\node at (\m-1,-2*\h) {$\bullet$};
\node at (\m+1,-2*\h) {$\bullet$};
\node[above] at (\m-1,-2*\h) {$v,x$};
\node[below] at (\m+1,-2*\h) {$v',y$};
\draw(\m-1,-2*\h)--(\m+1,-2*\h);
\node[above] at (0.5*\m,-2*\h) {$=$};

\node at (\m-1+\y,-2*\h) {$\bullet$};
\node at (\m+1+\y,-2*\h) {$\bullet$};
\node[above] at (\m-1+\y,-2*\h) {$v,y$};
\node[below] at (\m+1+\y,-2*\h) {$v',x$};
\draw(\m-1+\y,-2*\h)--(\m+1+\y,-2*\h);
\node at (\m+0.5*\y,-2*\h) {$+$};

\node at (\m+2*\y,1-2*\h) {$\bullet$};
\node at (\m+2*\y,-1-2*\h) {$\bullet$};
\node[above] at (\m+2*\y,1-2*\h)  {$v,x,v'$};
\node[below] at (\m+2*\y,-1-2*\h) {$y$};
\draw(\m+2*\y,1-2*\h) --(\m+2*\y,-1-2*\h);
\node at (\m+1.5*\y,-2*\h) {$+$};

\node at (\m+3*\y,1-2*\h) {$\bullet$};
\node at (\m+3*\y,-1-2*\h) {$\bullet$};
\node[above] at (\m+3*\y,1-2*\h)  {$x$};
\node[below] at (\m+3*\y,-1-2*\h) {$v,y,v'$};
\draw(\m+3*\y,1-2*\h) --(\m+3*\y,-1-2*\h);
\node at (\m+2.5*\y,-2*\h) {$+$};

\node at (\m+4*\y+\eps+1,-2*\h) {$\bullet$};
\node at (\m+4*\y+\eps-1,-2*\h) {$\bullet$};
\node[below] at (\m+4*\y+\eps+1,-2*\h)  {$v'$};
\node[above] at (\m+4*\y+\eps-1,-2*\h) {$v,x,y$};
\draw(\m+4*\y+\eps+1,-2*\h) --(\m+4*\y+\eps-1,-2*\h);
\node at (\m+3.5*\y+0.5*\eps,-2*\h) {$+\ 2\cdot$};

\node at (\m+5*\y+2*\eps+1,-2*\h) {$\bullet$};
\node at (\m+5*\y+2*\eps-1,-2*\h) {$\bullet$};
\node[below] at (\m+5*\y+2*\eps+1,-2*\h)  {$v',x,y$};
\node[above] at (\m+5*\y+2*\eps-1,-2*\h) {$v$};
\draw(\m+5*\y+2*\eps+1,-2*\h) --(\m+5*\y+2*\eps-1,-2*\h);
\node at (\m+4.5*\y+1.5*\eps,-2*\h) {$+\ 2\cdot$};
\end{tikzpicture}
\caption{Case 2 of \cref{lem:alpha-H-well-defined}}
\label{fig:case2-of-alpha-H-well-defined}
\end{figure}

Case 3: $m$ is a move of type (7) applied to a vertex $v$ and $m'$ is a move of type (8) applied to a triangular face $v,x,y$. We consider the reductions separately for $m$ and $m'$ in \cref{fig:case3-of-alpha-H-well-defined} and observe the local confluence as desired. Notice that $v$ has degree $3$, which is adjacent to the vertices $x,y,z$, while the vertices $x,y,z$ can have more edges incident to them. 
\begin{figure}[h!]
\centering
\begin{tikzpicture}[scale=0.3]
\def\x{5}
\def\m{8}
\def\eps{2}
\draw (0,0) circle (2.5);
\node at (0.2,0) {$\bullet$};
\node at (-1,1) {$\bullet$};
\node at (-1,-1) {$\bullet$};
\node at (1.7,0) {$\bullet$};
\node[above] at (-1,1) {$x$};
\node[below] at (-1,-1) {$y$};
\node[above] at (0.2,0) {$v$};
\node[above] at (1.7,0) {$z$};
\draw(-1,1)--(-1,-1)--(0.2,0)--(-1,1);
\draw(0.2,0)--(1.7,0);
\draw[->](3,0)--(\m-3,0);
\node[above] at (0.5*\m,0) {$m$};

\node at (-1+\m,1) {$\bullet$};
\node at (-1+\m,-1) {$\bullet$};
\node at (1+\m,0) {$\bullet$};
\node[left] at (-1+\m,1) {$x$};
\node[left] at (-1+\m,-1) {$y$};
\node[above] at (1+\m,0) {$z$};
\draw(-1+\m,-1)--(-1+\m,1)--(1+\m,0);

\node at (-1+\m+\x,1) {$\bullet$};
\node at (-1+\m+\x,-1) {$\bullet$};
\node at (1+\m+\x,0) {$\bullet$};
\node[left] at (-1+\m+\x,1) {$x$};
\node[left] at (-1+\m+\x,-1) {$y$};
\node[above] at (1+\m+\x,0) {$z$};
\draw(-1+\m+\x,1)--(-1+\m+\x,-1)--(1+\m+\x,0);
\node at (\m+0.5*\x,0) {$+$};

\node at (-1+\m+2*\x,1) {$\bullet$};
\node at (-1+\m+2*\x,-1) {$\bullet$};
\node at (1+\m+2*\x,0) {$\bullet$};
\node[left] at (-1+\m+2*\x,1) {$x$};
\node[left] at (-1+\m+2*\x,-1) {$y$};
\node[above] at (1+\m+2*\x,0) {$z$};
\draw[bend left=30](-1+\m+2*\x,1)to(-1+\m+2*\x,-1);
\draw[bend right=30](-1+\m+2*\x,1)to(-1+\m+2*\x,-1);
\node at (\m+1.5*\x,0) {$+$};

\node at (-1+\m+3*\x+\eps,1) {$\bullet$};
\node at (-1+\m+3*\x+\eps,-1) {$\bullet$};
\node at (1+\m+3*\x+\eps,0) {$\bullet$};
\node[left] at (-1+\m+3*\x+\eps,1) {$x$};
\node[left] at (-1+\m+3*\x+\eps,-1) {$y$};
\node[above] at (1+\m+3*\x+\eps,0) {$z$};
\draw(-1+\m+3*\x+\eps,-1)--(-1+\m+3*\x+\eps,1)--(1+\m+3*\x+\eps,0);
\draw[->](\m+2.5*\x,0)--(\m+2.5*\x+\eps,0);
\node[above] at (\m+2.5*\x+0.5*\eps,0) {(4)};

\node at (-1+\m+4*\x+\eps,1) {$\bullet$};
\node at (-1+\m+4*\x+\eps,-1) {$\bullet$};
\node at (1+\m+4*\x+\eps,0) {$\bullet$};
\node[left] at (-1+\m+4*\x+\eps,1) {$x$};
\node[left] at (-1+\m+4*\x+\eps,-1) {$y$};
\node[above] at (1+\m+4*\x+\eps,0) {$z$};
\draw(-1+\m+4*\x+\eps,1)--(-1+\m+4*\x+\eps,-1)--(1+\m+4*\x+\eps,0);
\node at (\m+3.5*\x+\eps,0) {$+$};

\node at (-1+\m+5*\x+2*\eps,0) {$\bullet$};
\node at (1+\m+5*\x+2*\eps,0) {$\bullet$};
\node[above] at (-1+\m+5*\x+2*\eps,0) {$x,y$};
\node[above] at (1+\m+5*\x+2*\eps,0) {$z$};
\node at (\m+4.5*\x+1.5*\eps,0) {$+\ 2\cdot$};
\end{tikzpicture}

\begin{tikzpicture}[scale=0.3]
\def\x{5}
\def\m{8}
\def\eps{2}
\draw (0,0) circle (2.5);
\node at (0.2,0) {$\bullet$};
\node at (-1,1) {$\bullet$};
\node at (-1,-1) {$\bullet$};
\node at (1.7,0) {$\bullet$};
\node[above] at (-1,1) {$x$};
\node[below] at (-1,-1) {$y$};
\node[above] at (0.2,0) {$v$};
\node[above] at (1.7,0) {$z$};
\draw(-1,1)--(-1,-1)--(0.2,0)--(-1,1);
\draw(0.2,0)--(1.7,0);
\draw[->](3,0)--(\m-3,0);
\node[above] at (0.5*\m,0) {$m'$};

\node at (-1+\m,1) {$\bullet$};
\node at (-1+\m,-1) {$\bullet$};
\node at (1+\m,0) {$\bullet$};
\node[above] at (-1+\m,1) {$x,v$};
\node[left] at (-1+\m,-1) {$y$};
\node[above] at (1+\m,0) {$z$};
\draw(-1+\m,-1)--(-1+\m,1)--(1+\m,0);

\node at (-1+\m+\x,1) {$\bullet$};
\node at (-1+\m+\x,-1) {$\bullet$};
\node at (1+\m+\x,0) {$\bullet$};
\node[left] at (-1+\m+\x,1) {$x$};
\node[below] at (-1+\m+\x,-1) {$y,v$};
\node[above] at (1+\m+\x,0) {$z$};
\draw(-1+\m+\x,1)--(-1+\m+\x,-1)--(1+\m+\x,0);
\node at (\m+0.5*\x,0) {$+$};

\node at (-1+\m+2*\x,0) {$\bullet$};
\node at (\m+2*\x,0) {$\bullet$};
\node at (1+\m+2*\x,0) {$\bullet$};
\node[above] at (-1+\m+2*\x,0) {$x,y$};
\node[below] at (\m+2*\x,0) {$v$};
\node[above] at (1+\m+2*\x,0) {$z$};
\draw(-1+\m+2*\x,0)--(\m+2*\x,0)--(1+\m+2*\x,0);
\node at (\m+1.5*\x,0) {$+$};

\node at (-1+\m+3*\x+\eps,1) {$\bullet$};
\node at (-1+\m+3*\x+\eps,-1) {$\bullet$};
\node at (1+\m+3*\x+\eps,0) {$\bullet$};
\node[left] at (-1+\m+3*\x+\eps,1) {$x$};
\node[left] at (-1+\m+3*\x+\eps,-1) {$y$};
\node[above] at (1+\m+3*\x+\eps,0) {$z$};
\draw(-1+\m+3*\x+\eps,-1)--(-1+\m+3*\x+\eps,1)--(1+\m+3*\x+\eps,0);
\draw[->](\m+2.5*\x,0)--(\m+2.5*\x+\eps,0);
\node[above] at (\m+2.5*\x+0.5*\eps,0) {(6)};

\node at (-1+\m+4*\x+\eps,1) {$\bullet$};
\node at (-1+\m+4*\x+\eps,-1) {$\bullet$};
\node at (1+\m+4*\x+\eps,0) {$\bullet$};
\node[left] at (-1+\m+4*\x+\eps,1) {$x$};
\node[left] at (-1+\m+4*\x+\eps,-1) {$y$};
\node[above] at (1+\m+4*\x+\eps,0) {$z$};
\draw(-1+\m+4*\x+\eps,1)--(-1+\m+4*\x+\eps,-1)--(1+\m+4*\x+\eps,0);
\node at (\m+3.5*\x+\eps,0) {$+$};

\node at (-1+\m+5*\x+2*\eps,0) {$\bullet$};
\node at (1+\m+5*\x+2*\eps,0) {$\bullet$};
\node[above] at (-1+\m+5*\x+2*\eps,0) {$x,y$};
\node[above] at (1+\m+5*\x+2*\eps,0) {$z$};
\node at (\m+4.5*\x+1.5*\eps,0) {$+\ 2\cdot$};
\end{tikzpicture}
\caption{Case 3 of \cref{lem:alpha-H-well-defined}}
\label{fig:case3-of-alpha-H-well-defined}
\end{figure}

The other cases involving some combinations of moves (0) to (6) of \cref{def:alpha-H} are tedious and straightforward to check so we omit them here.
\end{proof}

Recall that for an edge-weighted graph $H$ whose edges have multiplicities, its weight $\wt(H)$ is given by the product of weights on the edges with multiplicity. For example, if some edge $e$ has multiplicity $2$, then it contributed $\wt(e)^2$ to $\wt(H)$. 
\begin{defin}\label{def:B-basis}
For $\xi\in\TNC_n$, define
\[B_{\xi}(\Gamma):=\sum_{H\subset 2\Gamma}\alpha(H)_{\xi}\wt(H)\]
for any cactus network $\Gamma$, where the sum is over double groves $H$ whose edges set is contained in that of $\Gamma$ and $\alpha(H)_{\xi}$ is defined in \cref{def:alpha-H}.
\end{defin}
\begin{ex}
Consider the electrical network $\Gamma$ shown here with $n=3$ that consists of an interior vertex $x$ connected to $\bar1,\bar2,\bar3$, with weights $a,b,c$ respectively:
\[\begin{tikzpicture}[scale=0.5]
\draw (0,0) circle (2);
\node at (90:2) {$\bullet$};
\node at (210:2) {$\bullet$};
\node at (330:2) {$\bullet$};
\node at (90:2.7) {$\bar1$};
\node at (210:2.7) {$\bar3$};
\node at (330:2.7) {$\bar2$};
\node at (0,0) {$\bullet$};
\node[below] at (0,0) {$x$};
\draw(0,0)--(90:2);
\draw(0,0)--(210:2);
\draw(0,0)--(330:2);
\node[right] at (0,1) {$a$};
\node at (-1,-0.1) {$c$};
\node at (1,-0.1) {$b$};
\end{tikzpicture}.\]
Consider $\xi=\{(1,5),(2,6),(3,4)\}$, for which we denote as $(15|26|34)$ for simplicity. In order to compute $B_{\xi}(\Gamma)$, we need to sum over all $H\subset 2\Gamma$, where there are $27$ possibilities. However, notice that if $H$ has $\leq1$ edges, then (5) of \cref{def:alpha-H} implies $\alpha(H)=0$; and if $H$ has $\geq5$ edges counting with multiplicity, then after contracting once, some edge will have multiplicity $\geq3$, implying that $\alpha(H)=0$ as well. Here, we list the calculation of $\alpha(H)$, for those $H\subset2\Gamma$ such that $\alpha(H)_{\xi}\neq0$. 
\[\begin{tikzpicture}[scale=0.5]
\def\x{7}
\draw (0,0) circle (2);
\node at (90:2) {$\bullet$};
\node at (210:2) {$\bullet$};
\node at (330:2) {$\bullet$};
\node at (90:2.7) {$\bar1$};
\node at (210:2.7) {$\bar3$};
\node at (330:2.7) {$\bar2$};
\node at (0,0) {$\bullet$};
\node[below] at (0,0) {$x$};
\draw(0,0)--(90:2);
\draw(0,0)--(210:2);
\draw(0,0)--(330:2);
\draw (\x,0) circle (2);
\node at (\x,2) {$\bullet$};
\node at (\x-2*0.86602540378,-1) {$\bullet$};
\node at (\x+2*0.86602540378,-1) {$\bullet$};
\draw(\x-2*0.86602540378,-1)--(\x,2);
\node at (\x-2.5*0.5,2.5*0.86602540378) {$t_1$};
\node at (\x+2.5*0.5,2.5*0.86602540378) {$t_2$};
\node at (\x+2.5,0) {$t_3$};
\node at (\x+2.5*0.5,-2.5*0.86602540378) {$t_4$};
\node at (\x-2.5*0.5,-2.5*0.86602540378) {$t_5$};
\node at (\x-2.5,0) {$t_6$};
\draw[dashed](\x-2*0.5,2*0.86602540378)--(\x-0.86602540378,0.5)--(\x-2*0.5,-2*0.86602540378);
\draw[dashed](\x+2*0.5,2*0.86602540378)--(\x-0.86602540378,0.5)--(\x-2,0);
\draw[bend right=50, dashed] (\x+2,0) to (\x+2*0.5,-2*0.86602540378);

\draw (2*\x,0) circle (2);
\node at (2*\x,2) {$\bullet$};
\node at (2*\x-2*0.86602540378,-1) {$\bullet$};
\node at (2*\x+2*0.86602540378,-1) {$\bullet$};
\draw(2*\x,2)--(2*\x+2*0.86602540378,-1);
\node at (2*\x-2.5*0.5,2.5*0.86602540378) {$t_1$};
\node at (2*\x+2.5*0.5,2.5*0.86602540378) {$t_2$};
\node at (2*\x+2.5,0) {$t_3$};
\node at (2*\x+2.5*0.5,-2.5*0.86602540378) {$t_4$};
\node at (2*\x-2.5*0.5,-2.5*0.86602540378) {$t_5$};
\node at (2*\x-2.5,0) {$t_6$};
\draw[dashed](2*\x+2*0.5,2*0.86602540378)--(2*\x+0.86602540378,0.5)--(2*\x+2*0.5,-2*0.86602540378);
\draw[dashed](2*\x-2*0.5,2*0.86602540378)--(2*\x+0.86602540378,0.5)--(2*\x+2,0);
\draw[bend right=50, dashed] (2*\x-2*0.5,-2*0.86602540378) to (2*\x-2,0);

\draw (3*\x,0) circle (2);
\node at (3*\x,2) {$\bullet$};
\node at (3*\x-2*0.86602540378,-1) {$\bullet$};
\node at (3*\x+2*0.86602540378,-1) {$\bullet$};
\draw(3*\x-2*0.86602540378,-1)--(3*\x+2*0.86602540378,-1);
\node at (3*\x-2.5*0.5,2.5*0.86602540378) {$t_1$};
\node at (3*\x+2.5*0.5,2.5*0.86602540378) {$t_2$};
\node at (3*\x+2.5,0) {$t_3$};
\node at (3*\x+2.5*0.5,-2.5*0.86602540378) {$t_4$};
\node at (3*\x-2.5*0.5,-2.5*0.86602540378) {$t_5$};
\node at (3*\x-2.5,0) {$t_6$};
\draw[dashed](3*\x+2*0.5,-2*0.86602540378)--(3*\x,-1)--(3*\x-2,0);
\draw[dashed](3*\x-2*0.5,-2*0.86602540378)--(3*\x,-1)--(3*\x+2,0);
\draw[bend right=50, dashed] (3*\x-2*0.5,2*0.86602540378) to (3*\x+2*0.5,2*0.86602540378);

\node at (0.5*\x-0.3,0) {$\rightarrow$};
\node at (1.5*\x,0) {$+$};
\node at (2.5*\x,0) {$+$};
\end{tikzpicture}.\]
This calculation shows that $\alpha(\Gamma)=(15|26|34)+(13|24|56)+(12|35|46)$ so $\alpha(\Gamma)_\xi=1$. We also have
\[\begin{tikzpicture}[scale=0.5]
\def\x{7}
\draw (0,0) circle (2);
\node at (90:2) {$\bullet$};
\node at (210:2) {$\bullet$};
\node at (330:2) {$\bullet$};
\node at (90:2.7) {$\bar1$};
\node at (210:2.7) {$\bar3$};
\node at (330:2.7) {$\bar2$};
\node at (0,0) {$\bullet$};
\node[below] at (0,0) {$x$};
\draw[bend right=10](0,0)to(90:2);
\draw[bend left=10](0,0)to(90:2);
\draw(0,0)--(210:2);
\draw (\x,0) circle (2);
\node at (\x,2) {$\bullet$};
\node at (\x-2*0.86602540378,-1) {$\bullet$};
\node at (\x+2*0.86602540378,-1) {$\bullet$};
\draw(\x-2*0.86602540378,-1)--(\x,2);
\node at (\x-2.5*0.5,2.5*0.86602540378) {$t_1$};
\node at (\x+2.5*0.5,2.5*0.86602540378) {$t_2$};
\node at (\x+2.5,0) {$t_3$};
\node at (\x+2.5*0.5,-2.5*0.86602540378) {$t_4$};
\node at (\x-2.5*0.5,-2.5*0.86602540378) {$t_5$};
\node at (\x-2.5,0) {$t_6$};
\draw[dashed](\x-2*0.5,2*0.86602540378)--(\x-0.86602540378,0.5)--(\x-2*0.5,-2*0.86602540378);
\draw[dashed](\x+2*0.5,2*0.86602540378)--(\x-0.86602540378,0.5)--(\x-2,0);
\draw[bend right=50, dashed] (\x+2,0) to (\x+2*0.5,-2*0.86602540378);
\node at (0.5*\x-0.3,0) {$\rightarrow$};
\end{tikzpicture},\quad
\begin{tikzpicture}[scale=0.5]
\def\x{7}
\draw (0,0) circle (2);
\node at (90:2) {$\bullet$};
\node at (210:2) {$\bullet$};
\node at (330:2) {$\bullet$};
\node at (90:2.7) {$\bar1$};
\node at (210:2.7) {$\bar3$};
\node at (330:2.7) {$\bar2$};
\node at (0,0) {$\bullet$};
\node[below] at (0,0) {$x$};
\draw[bend right=10](0,0)to(210:2);
\draw[bend left=10](0,0)to(210:2);
\draw(0,0)--(90:2);
\draw (\x,0) circle (2);
\node at (\x,2) {$\bullet$};
\node at (\x-2*0.86602540378,-1) {$\bullet$};
\node at (\x+2*0.86602540378,-1) {$\bullet$};
\draw(\x-2*0.86602540378,-1)--(\x,2);
\node at (\x-2.5*0.5,2.5*0.86602540378) {$t_1$};
\node at (\x+2.5*0.5,2.5*0.86602540378) {$t_2$};
\node at (\x+2.5,0) {$t_3$};
\node at (\x+2.5*0.5,-2.5*0.86602540378) {$t_4$};
\node at (\x-2.5*0.5,-2.5*0.86602540378) {$t_5$};
\node at (\x-2.5,0) {$t_6$};
\draw[dashed](\x-2*0.5,2*0.86602540378)--(\x-0.86602540378,0.5)--(\x-2*0.5,-2*0.86602540378);
\draw[dashed](\x+2*0.5,2*0.86602540378)--(\x-0.86602540378,0.5)--(\x-2,0);
\draw[bend right=50, dashed] (\x+2,0) to (\x+2*0.5,-2*0.86602540378);
\node at (0.5*\x-0.3,0) {$\rightarrow$};
\end{tikzpicture}.\]
One can check that $\alpha(H)_{\xi}=0$ if $H$ is not the above three choices. As a result, we compute that $B_{\xi}(\Gamma)=abc+a^2c+ac^2$. 
\end{ex}

\subsection{Expansion of $\{L_{\sigma}L_{\sigma'}\}$ into $\{B_{\xi}\}$}

We now work towards lifting $B_{\xi}$'s to $G_{n,2}$ and showing that they form a basis of $G_{n,2}$. Recall that for a matching $M\in\TNC_n$ and a vector $v\in\{0,1\}^{\CR(M)}$, $M(v)\in\NCM_n$ is the noncrossing matching obtained from resolving the crossings in $M$ in the way specified by $v$, called a \textit{resolution of crossings} (see \cref{sec:combinatorics-3noncrossing}). We say that such a resolution $M(v)$ is \textit{valid} if no internal loops result from the uncrossings, i.e. all segments are used.
\begin{center}
\begin{tikzpicture}[scale=0.6]
\node at (1,0) {$\bullet$};
\node at (2,0) {$\bullet$};
\node at (3,0) {$\bullet$};
\node at (4,0) {$\bullet$};
\node at (5,0) {$\bullet$};
\node at (6,0) {$\bullet$};
\node at (7,0) {$\bullet$};
\node at (8,0) {$\bullet$};
\node[below] at (1,0) {$1$};
\node[below] at (2,0) {$2$};
\node[below] at (3,0) {$3$};
\node[below] at (4,0) {$4$};
\node[below] at (5,0) {$5$};
\node[below] at (6,0) {$6$};
\node[below] at (7,0) {$7$};
\node[below] at (8,0) {$8$};
\draw[bend left=50] (1,0) to (6,0);
\draw[bend left=50] (2,0) to (5,0);
\draw[bend left=50] (3,0) to (8,0);
\draw[bend left=50] (4,0) to (7,0);
\node at (4.5, -1.5) {$M$};
\end{tikzpicture} 
\quad
\begin{tikzpicture}[scale=0.6]
\node at (1,0) {$\bullet$};
\node at (2,0) {$\bullet$};
\node at (3,0) {$\bullet$};
\node at (4,0) {$\bullet$};
\node at (5,0) {$\bullet$};
\node at (6,0) {$\bullet$};
\node at (7,0) {$\bullet$};
\node at (8,0) {$\bullet$};
\node[below] at (1,0) {$1$};
\node[below] at (2,0) {$2$};
\node[below] at (3,0) {$3$};
\node[below] at (4,0) {$4$};
\node[below] at (5,0) {$5$};
\node[below] at (6,0) {$6$};
\node[below] at (7,0) {$7$};
\node[below] at (8,0) {$8$};
\draw[bend left=30] (1,0) to (4.5,1.05);
\draw[bend right=30] (8,0) to (4.5, 1.05);
\draw[bend left=20] (2,0) to (3.7,0.6);
\draw[bend left=15] (3,0) to (3.7, 0.6);
\draw[bend right=20] (7,0) to (5.3,0.6);
\draw[bend right=15] (6,0) to (5.3, 0.6);
\draw[bend left=15,color=red] (3.9,0.65) to (4.5, 0.9);
\draw[bend right=15,color=red] (5.1, 0.65) to (4.5, 0.9);
\draw[bend left=15,color=red] (3.9, 0.65) to (4.5, 0.5);
\draw[bend left=15,color=red] (4.5, 0.5) to (5.1, 0.65);
\draw[bend left=15] (4,0) to (4.5,0.35);
\draw[bend right=15] (5,0) to (4.5,0.35);
\node at (4.5, -1.5) {Invalid};
\end{tikzpicture}
\quad
\begin{tikzpicture}[scale=0.6]
\node at (1,0) {$\bullet$};
\node at (2,0) {$\bullet$};
\node at (3,0) {$\bullet$};
\node at (4,0) {$\bullet$};
\node at (5,0) {$\bullet$};
\node at (6,0) {$\bullet$};
\node at (7,0) {$\bullet$};
\node at (8,0) {$\bullet$};
\node[below] at (1,0) {$1$};
\node[below] at (2,0) {$2$};
\node[below] at (3,0) {$3$};
\node[below] at (4,0) {$4$};
\node[below] at (5,0) {$5$};
\node[below] at (6,0) {$6$};
\node[below] at (7,0) {$7$};
\node[below] at (8,0) {$8$};
\draw[bend left=30] (1,0) to (4.4,0.9);
\draw[bend right=30] (8,0) to (4.6, 0.9);
\draw[bend left=15] (3.9,0.65) to (4.4, 0.9);
\draw[bend right=15] (5.1, 0.65) to (4.6, 0.9);
\draw[bend left=15] (3.9, 0.65) to (4.4, 0.4);
\draw[bend left=15] (4.6, 0.4) to (5.1, 0.65);
\draw[bend left=15] (4,0) to (4.4, 0.4);
\draw[bend left=15] (4.6,0.4) to (5,0);
\draw[bend left=20] (2,0) to (3.7,0.6);
\draw[bend left=15] (3,0) to (3.7, 0.6);
\draw[bend right=20] (7,0) to (5.3,0.6);
\draw[bend right=15] (6,0) to (5.3, 0.6);
\node at (4.5, -1.5) {valid};
\end{tikzpicture}

\end{center}

Two loopless resolutions $M(v)$ and $M(v')$ are called \textit{opposite} if $v+v'=\mathbf{1}$. Note that $v+v'=\mathbf{1}$ means that the choices of uncrossings at every intersection in $M$ for $M(v)$ and $M(v')$ are different. 
\begin{defin}
For a $3$-noncrossing matching $\xi\in\TNC_n$, and a pair of noncrossing partitions $(\sigma,\sigma')\in\NP_n\times\NP_n$, define $a_{\xi,(\sigma,\sigma')}$ to be the number of valid opposite loopless resolution of $\xi$ that results in $(\sigma,\sigma')$. In other words,
\[a_{\xi,(\sigma,\sigma')}:=\#\{v\in\{0,1\}^{\CR(\xi)}\:|\: \xi(v)=\tau(\sigma),\ \xi(\mathbf{1}{-}v)=\tau(\sigma')\text{ are both valid}\}\]
where $\tau(\sigma)$ is the noncrossing matching that corresponds to the noncrossing partition $\sigma$ as in \cref{sec:catalan}.
\end{defin}

\begin{lemma}\label{lem:a-interpretation}
For $\xi\in\TNC_n$ and a pair of noncrossing partitions $(\sigma,\sigma')\in\NP_n\times\NP_n$, $a_{\xi,(\sigma,\sigma')}$ equals the number of ways of splitting $\Gamma(\xi)$ into two groves $F$ and $F'$ with boundary partition $\sigma$ and $\sigma'$ respectively.
\end{lemma}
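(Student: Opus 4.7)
The plan is to set up an explicit bijection between the vectors $v \in \{0,1\}^{\CR(\xi)}$ counted by $a_{\xi,(\sigma,\sigma')}$ and the edge partitions $E(\Gamma(\xi)) = F \sqcup F'$ in which both $F$ and $F'$ are groves with boundary partitions $\sigma$ and $\sigma'$ respectively. Since $\xi$ is $3$-noncrossing, \cref{prop:3-noncrossing} gives a unique reduced cactus network $\Gamma(\xi)$, and its medial graph $G(\Gamma(\xi))$ has a canonical bijection between degree-four vertices $t_e$ and edges $e$ of $\Gamma(\xi)$. Fix the convention that $v_{t_e} = 0$ means the resolution at $t_e$ joins the two primal (white) corners — namely the corners lying in the white medial faces of the two endpoints of $e$ — and $v_{t_e} = 1$ means the opposite dual resolution. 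Under this convention, set $F := \{e : v_{t_e} = 0\}$ and $F' := \{e : v_{t_e} = 1\}$; the opposite vector $\mathbf{1}-v$ then corresponds to the swapped pair $(F',F)$.

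Next I would verify that when $\xi(v)$ is valid, its associated noncrossing partition $\sigma(\xi(v))$ equals the boundary partition of $F$. This is the standard medial-graph computation: the white regions produced by the resolutions are in bijection with the connected components of the subgraph $F$, because a primal resolution at $t_e$ merges the two white medial faces adjacent to $e$ exactly when $e \in F$. Combining with the recipe of \cref{sub:medial} for placing $\bar i$ between $t_{2i-1}$ and $t_{2i}$, and with the noncrossing matching/partition bijection of \cref{sec:catalan}, identifies the partition of $\{\bar 1,\ldots,\bar n\}$ read off from $\xi(v)$ with the boundary partition of $F$. The analogous statement holds for $\mathbf{1}-v$ and $F'$.

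The heart of the argument is the equivalence: $\xi(v)$ is valid (i.e., loop-free) if and only if $F$ is a grove. Any internal closed loop in the resolution bounds a region that is either entirely white or entirely black. A white internal loop encloses a connected component of $F$ containing no boundary vertex, violating the grove condition. A black internal loop encloses a collection of internal faces of $\Gamma(\xi)$ that are separated from the outer face by edges of $F$ alone, which forces a cycle in $F$ and again violates the grove condition. Conversely, if $F$ is a grove then neither obstruction can occur and $\xi(v)$ is loop-free. The same equivalence applied to $\mathbf{1}-v$ and $F'$ completes the bijection, and the count $a_{\xi,(\sigma,\sigma')}$ matches the number of grove splittings of $\Gamma(\xi)$ with the prescribed boundary partitions.

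The main obstacle will be the ``black internal loop $\Leftrightarrow$ cycle in $F$'' direction in the third paragraph: it requires the planar-duality statement that a collection of interior faces of $\Gamma(\xi)$ is cut off from the outer face by a subset of edges $S$ precisely when $S$ contains a primal cycle of $\Gamma(\xi)$. For a plain disk embedding this is a well-known consequence of Whitney duality, but here one has to be slightly careful about the cactus geometry — in particular, that the complement of $\Gamma(\xi)$ in the cactus has a unique outer region because the cactus is simply connected, so that ``touching the boundary'' is unambiguous for dual components of the resolution.
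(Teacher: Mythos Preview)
Your proposal is correct and follows essentially the same route as the paper's proof: both set up the bijection between resolution vectors $v\in\{0,1\}^{\CR(\xi)}$ and edge bipartitions $E(\Gamma(\xi))=F\sqcup F'$ via the crossing--edge correspondence in the medial graph, and both reduce validity of $\xi(v)$ to $F$ being a grove by analyzing interior loops. Your white/black classification of internal loops is exactly the paper's two cases (component of $F$ missing the boundary, and cycle in $F$) organized by which color of medial region the loop bounds; the paper phrases the converse as ``tracing the loop'' rather than via colors, but the content is identical. You are in fact slightly more explicit than the paper in one place: you spell out why the noncrossing partition read from $\xi(v)$ agrees with the boundary partition of $F$, which the paper leaves implicit in the picture. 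The cactus subtlety you flag is real but harmless, and the paper does not dwell on it either.
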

\begin{proof}
Consider the cactus network $\Gamma=\Gamma(\xi)$ and the medial graph $G=G(\Gamma)$ of $\xi$ together. For each crossing $x\in\CR(\xi)$, there is a corresponding edge $e_x$ in $\Gamma$ that passes through the this crossing. Consider two opposite resolution $v$ and $v'$ such that $v+v'=\mathbf{1}^{\CR(\xi)}$. Correspondingly, we partition the edges of $\Gamma$ into $F\sqcup F'$ such that at each crossing $x\in\CR(\xi)$, if the resolution $\xi(v)$ does not intersect $e_x$, which looks like \begin{tikzpicture}[scale=0.4]\draw[bend right=50](0,0.4)to(1,0.4);\draw(0,0)--(1,0);\draw[bend left=50](0,-0.4)to(1,-0.4);\end{tikzpicture} , then we assign $e_x$ to $F$, otherwise assign $e_x$ to $F'$. This procedure is reversible, as any partition of the edge set of $\Gamma$ gives an opposite resolution.

We also see that if $F$ has a cycle, then $\xi(v)$ contains an interior loop inside this cycle. And if there is a connected component $C$ of $F$ not connected to the boundary, then $\xi(v)$ also contains an interior loop following edges not assigned to $F$ around $C$. Both situations are shown in Figure\cref{fig:invalid-resolution}. Conversely, if $\xi(v)$ contains an interior loop, by tracing through the crossings in $\xi$ and the corresponding edges in $\Gamma$ around this loop, we recover one of the two situations described previously. As a result, the opposite resolutions are valid if and only if both $F$ and $F'$ are groves. So we obtain the desired statement. 
\begin{figure}[h!]
\centering
\begin{tikzpicture}[scale=0.8]
\node (a) at (0,0) {$\bullet$};
\node (b) at (-0.2,3) {$\bullet$};
\node (c) at (1,4) {$\bullet$};
\node (d) at (3,3) {$\bullet$};
\node (e) at (4,3.5) {$\bullet$};
\node (f) at (4.5,0) {$\bullet$};
\draw(0,0)--(-0.2,3)--(1,4)--(3,3)--(4,3.5)--(4.5,0)--(0,0);
\node at (1.2,1.2) {$\bullet$};
\draw[dashed](0,0)--(1.2,1.2);
\node at (2,2.8) {$\bullet$};
\draw[dashed](3,3)--(2,2.8);
\node at (3,2) {$\bullet$};
\draw[dashed](3,3)--(3,2);
\node at (3,1) {$\bullet$};
\draw[dashed](4.5,0)--(3,1);

\coordinate (i1) at (0,1.5);
\coordinate (i2) at (0.6,0.6);
\coordinate (i3) at (2.25,0.1);
\coordinate (i4) at (3.75,0.5);
\coordinate (i5) at (4.15,1.75);
\coordinate (i6) at (3.5,3.15);
\coordinate (i7) at (3,2.5);
\coordinate (i8) at (2.5,2.9);
\coordinate (i9) at (1.9,3.4);
\coordinate (i10) at (0.4,3.4);
\draw[bend right=30,red] (i1) to (i2);
\draw[bend right=30,red] (i2) to (i3);
\draw[bend right=30,red] (i3) to (i4);
\draw[bend right=30,red] (i4) to (i5);
\draw[bend right=30,red] (i5) to (i6);
\draw[bend right=30,red] (i6) to (i7);
\draw[bend right=30,red] (i7) to (i8);
\draw[bend right=30,red] (i8) to (i9);
\draw[bend right=30,red] (i9) to (i10);
\draw[bend right=30,red] (i10) to (i1);
\end{tikzpicture}
\qquad
\begin{tikzpicture}[scale=0.8]
\coordinate (a) at (0,0);
\coordinate (b) at (1,-1);
\coordinate (c) at (2,-0.5);
\coordinate (d) at (2.5,0.5);
\coordinate (e) at (3,-1.5);
\node at (a) {$\bullet$};
\node at (b) {$\bullet$};
\node at (c) {$\bullet$};
\node at (d) {$\bullet$};
\node at (e) {$\bullet$};
\draw(a)--(b)--(c)--(d);
\draw(c)--(e);
\node at (-1,0) {$\bullet$};
\draw[dashed](a)--(-1,0);
\node at (0.5,1) {$\bullet$};
\draw[dashed](a)--(0.5,1);
\node at (0.5,-2) {$\bullet$};
\draw[dashed](b)--(0.5,-2);
\node at (2.5,1.5) {$\bullet$};
\draw[dashed](d)--(2.5,1.5);
\node at (3.5,0.5) {$\bullet$};
\draw[dashed](d)--(3.5,0.5);
\node at (4,-2) {$\bullet$};
\draw[dashed](e)--(4,-2);

\coordinate (i1) at (-0.5,0);
\coordinate (i2) at (0.4,-0.6);
\coordinate (i3) at (0.75,-1.5);
\coordinate (i4) at (1.6,-0.85);
\coordinate (i5) at (2.4,-1.1);
\coordinate (i6) at (3.5,-1.75);
\coordinate (i7) at (2.6,-0.9);
\coordinate (i8) at (2.35,-0.1);
\coordinate (i9) at (3,0.5);
\coordinate (i10) at (2.5,1);
\coordinate (i11) at (2.15,0.1);
\coordinate (i12) at (1.4,-0.65);
\coordinate (i13) at (0.6,-0.4);
\coordinate (i14) at (0.25,0.5);

\draw[bend right=30,red] (i1) to (i2);
\draw[bend right=30,red] (i2) to (i3);
\draw[bend right=30,red] (i3) to (i4);
\draw[bend right=30,red] (i4) to (i5);
\draw[bend right=50,red] (i5) to (i6);
\draw[bend right=50,red] (i6) to (i7);
\draw[bend right=30,red] (i7) to (i8);
\draw[bend right=30,red] (i8) to (i9);
\draw[bend right=30,red] (i9) to (i10);
\draw[bend right=30,red] (i10) to (i11);
\draw[bend right=30,red] (i11) to (i12);
\draw[bend right=30,red] (i12) to (i13);
\draw[bend right=30,red] (i13) to (i14);
\draw[bend right=30,red] (i14) to (i1);
\end{tikzpicture}
\caption{Invalid resolutions, where edges assigned to $F$ are solid, edges assigned to $F'$ are dashed, and internal loops in $\xi(v)$ are in red.}
\label{fig:invalid-resolution}
\end{figure}
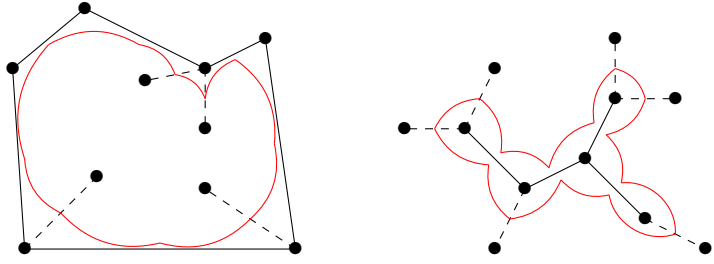
\end{proof}

\begin{theorem}\label{thm:L-into-B}
For $\sigma,\sigma'\in\NP_n$ and any cactus network $\Gamma$,
\begin{equation}\label{eq:L-into-B}
L_{\sigma}(\Gamma)L_{\sigma'}(\Gamma)=\sum_{\xi\in\TNC_n}a_{\xi,(\sigma,\sigma')}B_{\xi}(\Gamma).
\end{equation}
\end{theorem}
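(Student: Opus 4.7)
The plan is to prove the equality of polynomials in the independent edge-weight indeterminates $\wt(e)$ by comparing the coefficient of $\wt(H)$ for each double grove $H\subset 2\Gamma$. Unpacking both sides, this reduces \cref{eq:L-into-B} to the combinatorial identity
\[
\beta(H) \;:=\; \sum_{F \cup F' = H}(\sigma(F),\sigma(F')) \;=\; A(\alpha(H)) \quad \text{in }\Z[\NP_n\times\NP_n],
\]
for every double grove $H$, where the sum on the left runs over ordered pairs of groves whose multiset union of edges equals $H$, and $A:\Z\TNC_n\to\Z[\NP_n\times\NP_n]$ is the linear extension of $\xi\mapsto\sum_{\sigma,\sigma'}a_{\xi,(\sigma,\sigma')}(\sigma,\sigma')$.

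I will prove this identity by induction matching the recursive definition of $\alpha$. By confluence (\cref{lem:alpha-H-well-defined}), it suffices to verify the base case together with each reduction rule of \cref{def:alpha-H}. The base case, rule~(1), is immediate: when $H$ is reduced with $3$-noncrossing medial pairing $\xi$, all edges have multiplicity~$1$, the pairs $(F,F')$ with $F\cup F' = H$ are edge-partitions $E(H) = F\sqcup F'$ into two groves, and \cref{lem:a-interpretation} identifies their count with $a_{\xi,(\sigma,\sigma')}$. Rules (0), (2), (3), (4), (5) are routine: an edge of multiplicity $\geq 3$ or a loop cannot be realized by $F\cup F'$ for simple forests; a mult-$2$ edge must lie in both groves and can be contracted without changing boundary partitions (its endpoints already share a component of each grove); three or more parallel mult-$1$ edges force a cycle in one grove, while two parallel mult-$1$ edges admit exactly two valid ordered assignments, giving the factor $2$; and an interior mult-$1$ leaf or isolated interior vertex produces an invalid interior-only component.

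The heart of the argument is the interior-vertex and triangular-face rules (6), (7), (8). For each, I enumerate how the edges incident to the relevant feature are distributed between $F$ and $F'$ under the grove condition, and match the resulting configurations with the terms $\beta(H_i)$. In rule~(7), for example, the six legal distributions of the three edges at the degree-$3$ interior vertex $v$ pair off into three groups: a configuration with $e_j,e_k$ in the same grove biject with the pairs in $H_i$ that place the new edge $(v_j,v_k)$ in that grove; because $v$ is interior, boundary partitions transport unchanged under this bijection, yielding $\beta(H) = \beta(H_1)+\beta(H_2)+\beta(H_3)$. Rule~(6) is the easier two-configuration version, and rule~(8) is analogous via the six distributions of triangle edges and the cactus identification $v_i = v_{i+1}$ in $H_i$.

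The main obstacle is rule~(8), because the vertices $v_1, v_2, v_3$ of the triangular face may be boundary vertices, so the identification $v_i = v_{i+1}$ in $H_i$ genuinely refines the ambient cactus structure rather than merely collapsing interior combinatorics. I will need to verify that the coarsening of $(\sigma(F),\sigma(F'))$ forced by the merge exactly matches the boundary partitions of the pairs in $H_i$, and that the six triangle configurations in $H$ split evenly into the two pair-placements across the three choices of $i$. Confluence from \cref{lem:alpha-H-well-defined} allows me to restrict attention to the all-multiplicity-$1$ local configuration, after which the check becomes a direct bijection, straightforward to exhibit in the small case of a standalone triangle of boundary vertices and then propagated to the general situation by attaching the ambient non-triangle edges.
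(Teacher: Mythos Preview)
Your proposal is correct and follows essentially the same approach as the paper's proof: both expand the right-hand side via \cref{def:B-basis}, compare coefficients of $\wt(H)$ for each double grove $H\subset 2\Gamma$, interpret the left-hand coefficient as the number of ordered splittings of $H$ into two groves with prescribed boundary partitions, and then verify this count satisfies the recursion of \cref{def:alpha-H}, with the base case supplied by \cref{lem:a-interpretation}. Your formulation with $\beta(H)$ and the linear map $A$ is a clean repackaging of the same argument, and your explicit attention to rule~(8) when triangle vertices lie on the boundary is a point the paper treats only by the phrase ``is similar''; your resolution via the cactus identification is the right one.
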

\begin{proof}
By definition of $B_{\xi}(\Gamma)$, the right hand side of \cref{eq:L-into-B} equals
\[\sum_{\xi\in\TNC_n}a_{\xi,(\sigma,\sigma')}\sum_{H\subset 2\Gamma}\alpha(H)_{\xi}\wt(H)=\sum_{H\subset2\Gamma}\wt(H)\sum_{\xi\in\TNC_n}a_{\xi,(\sigma,\sigma')}\alpha(H)_{\xi}.\]
Viewing every edge weight in $\Gamma$ as an indeterminate, we compare the coefficient of $\wt(H)$ on both sides for every $H$. For a fixed $H\subset 2\Gamma$, the coefficient of $\wt(H)$ on the left hand side $L_{\sigma}(\Gamma)L_{\sigma'}(\Gamma)$ is by definition the number of ways to partition $H$ into $F\sqcup F'$ such that $F$ is a grove with boundary partition $\sigma$ and $F'$ is a grove with boundary partition $\sigma'$. It suffices to show that this number equals $\sum_{\xi\in\TNC_n}a_{\xi,(\sigma,\sigma')}\alpha(H)_{\xi}$. The proof also explains the origin of \cref{def:alpha-H}.

To split $H$ into two groves $F\sqcup F'$, there are a few easy steps of reduction. If some edge of $H$ has multiplicity $\geq3$ (which cannot happen at the very beginning) or $H$ has a loop or $H$ has an interior leaf, then clearly no such splittings are possible. These situations correspond to moves (0), (2) and (5) of \cref{def:alpha-H}. Additionally, we can do the following reductions corresponding to moves (3), (4) and (6) of \cref{def:alpha-H}:
\begin{itemize}
    \item[-] If $H$ has a double edge $e$, then both $F$ and $F'$ need to contain $e$, so we can simply assign $e$ to both $F$ and $F'$ and then contract $e$. (Corresponds to move (3).)
    \item[-] If $H$ has two edges $e_1$ and $e_2$ between the same pair of vertices, then one needs to be assigned to $F$ and the other one to $F'$, so we can contract this edges and multiply the result by $2$. (Corresponds to move (4).)
    \item[-] If $H$ has more than two edges between the same pair of vertices, then no assignment is possible. (Corresponds to move (4).)
    \item[-] If $H$ has an interior vertex of degree $2$ with distinct neighbors, then these two edges must be assigned to $F$ and $F'$ respectively with both choices being symmetric and allowed. Therefore we can remove this interior vertex and multiply the result by $2$. (Corresponds to move (6).)
\end{itemize}

Next, if $H$ contains an interior vertex $v$ of degree $3$ connected to $v_1,v_2,v_3$, then to assign these three edges to $F$ and $F'$, we need to partition these three edges into a group of one and a group of two. There are three ways of doing so. If $(v,v_1)$ is assigned to one of $F$ and $F'$ and $(v,v_2)$ and $(v,v_3)$ to the other, then effectively we are removing $v$ (together with the three edges incident to it) from $H$, and adding an edge $(v_2,v_3)$ to it, to obtain a smaller graph $H_2$. For a partition of $H_2$ into two groves $F_2\sqcup F_2'$, we can recover a partition of $H$ into two groves $F\sqcup F'$ by assigning $(v,v_2)$ and $(v,v_3)$ to $F$ if $(v_2,v_3)$ in $H_2$ is assigned to $F_2$ and vice versa. This justifies move (7) of \cref{def:alpha-H}. The situation where $H$ contains a triangular face, which is move (8), is similar.

After all the reduction steps are performed, by \cref{prop:3-noncrossing}, we are left with a graph $H$ whose edges have multiplicity $1$ and $H$ is the medial graph of some $3$-noncrossing matching $\xi$. This is move (1) of \cref{def:alpha-H}. The integer $\alpha(H)_{\xi}$ is the number of ways that we end up in this situation. And the integer $a_{\xi,(\sigma,\sigma')}$ is the number of ways to do the partition into two groves as desired, by \cref{lem:a-interpretation}. Summing over $\xi\in\TNC_n$, we obtain the coefficient on the right hand side of \cref{eq:L-into-B}, as desired. 
\end{proof}

\begin{prop}\label{prop:B-basis}
For each $\xi\in\TNC_n$, we can lift $B_{\xi}$ to $G_{n,2}$. Moreover, $\{B_\xi\:|\: \xi\in\TNC_n\}$ is a basis of $G_{n,2}$.
\end{prop}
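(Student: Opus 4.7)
The plan is to invert the expansion in \cref{thm:L-into-B} via a triangular argument keyed to the two extreme resolutions $\xi(\mathbf{0})$ and $\xi(\mathbf{1})$. I will (a) establish linear independence of $\{B_\xi\}_{\xi\in\TNC_n}$ as functions on cactus networks, (b) exhibit a triangular system showing each $B_\xi$ is a $\Z$-linear combination of quadratic products $L_\sigma L_{\sigma'}$ (so $B_\xi$ lifts to $G_{n,2}$), and (c) combine with \cref{thm:L-into-B} to conclude that $\{B_\xi\}$ is a basis.

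For (a), fix $\xi\in\TNC_n$ and evaluate at the reduced cactus network $\Gamma(\xi)$ from \cref{prop:3-noncrossing}, treating each edge weight as an indeterminate. For any $\xi'\in\TNC_n$, the coefficient of the squarefree top monomial $\prod_{e\in E(\Gamma(\xi))}\wt(e)$ in $B_{\xi'}(\Gamma(\xi))$ can only come from the subgraph $H=\Gamma(\xi)\subset 2\Gamma(\xi)$; since $\Gamma(\xi)$ is by construction in the terminal form of \cref{def:alpha-H} (no parallel edges, triangular faces, or interior vertices of degree $\le 3$), move (1) gives $\alpha(\Gamma(\xi))=\xi$ directly. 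Hence the coefficient equals $\delta_{\xi,\xi'}$, and any relation $\sum_\xi c_\xi B_\xi=0$ forces all $c_\xi=0$.

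For (b), assign to each $\xi$ the pair $(\sigma_\xi,\sigma'_\xi):=(\sigma(\xi(\mathbf{0})),\sigma(\xi(\mathbf{1})))$ and define a partial order on $\TNC_n$ by $\xi\preceq\xi'$ iff $\xi'(\mathbf{0})\le \xi(\mathbf{0})$ and $\xi(\mathbf{1})\le \xi'(\mathbf{1})$ in the Dyck path poset. I claim the matrix $(a_{\xi',(\sigma_\xi,\sigma'_\xi)})$ is upper triangular in the $\preceq$ order with unit diagonal. If $v$ witnesses $a_{\xi',(\sigma_\xi,\sigma'_\xi)}>0$, then $\xi'(\mathbf{1}-v)=\xi(\mathbf{1})\le \xi'(\mathbf{1})$ by \cref{prop:resolution-1-maximum}, and the analogous lower-bound statement $\xi'(v)\ge \xi'(\mathbf{0})$ (proved by the same counting argument on right-endpoint heights) gives $\xi(\mathbf{0})\ge \xi'(\mathbf{0})$; thus $\xi\preceq\xi'$. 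For the diagonal, when $\xi'=\xi$ only $v=\mathbf{0}$ gives $\xi(v)=\xi(\mathbf{0})$ (by the strictness in the minimum version of \cref{prop:resolution-1-maximum}), and the required validity of both $\xi(\mathbf{0})$ and $\xi(\mathbf{1})$ is automatic since these are genuine noncrossing matchings by \cref{thm:equality-of-bijection}. Ordering $\TNC_n$ by a linear extension of $\preceq$ and rewriting \cref{thm:L-into-B},
$$L_{\sigma_\xi}L_{\sigma'_\xi}=B_\xi+\sum_{\xi'\succ\xi}a_{\xi',(\sigma_\xi,\sigma'_\xi)}\,B_{\xi'},$$
downward induction on $\preceq$ expresses each $B_\xi$ as a $\Z$-linear combination of products $L_\sigma L_{\sigma'}$, hence realizes $B_\xi$ as an element of $G_{n,2}$.

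For (c), the $L_\sigma L_{\sigma'}$'s span $G_{n,2}$ by definition and each expands into $\{B_\xi\}$ via \cref{thm:L-into-B}, so the $B_\xi$'s span $G_{n,2}$; combined with (a) they form a basis. The main technical obstacle is the triangularity claim, specifically the lower-bound analog of \cref{prop:resolution-1-maximum} stating that $\xi(\mathbf{0})\le \xi(v)$ (with equality iff $v=\mathbf{0}$) and the automatic validity of $\xi(\mathbf{0})$; both are plausible but must be carefully verified before the triangular inversion can be run.
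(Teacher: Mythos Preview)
Your part (a) is correct and in fact slightly cleaner than the paper's argument: extracting the coefficient of the squarefree top monomial $\prod_{e\in E(\Gamma(\xi))}\wt(e)$ in $B_{\xi'}(\Gamma(\xi))$ immediately yields the identity matrix, whereas the paper argues more indirectly by filtering on the number of crossings. Part (c) is also fine once (a) and (b) are in place.

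Part (b), however, has a genuine gap: the ``lower-bound analog'' of \cref{prop:resolution-1-maximum} that you invoke, namely $\xi'(\mathbf{0})\le\xi'(v)$, is \emph{false}, even for $v$ such that both $\xi'(v)$ and $\xi'(\mathbf{1}-v)$ are valid. Take $\xi'=\{(1,6),(2,5),(3,8),(4,7)\}\in\TNC_4$ (four crossings, $3$-noncrossing). For a suitable $v$ one computes
\[
\xi'(v)=\{(1,2),(3,8),(4,5),(6,7)\},\qquad \xi'(\mathbf{1}-v)=\{(1,8),(2,3),(4,7),(5,6)\},
\]
both loopless, while $\xi'(\mathbf{0})=\{(1,4),(2,3),(5,8),(6,7)\}$. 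The Dyck paths of $\xi'(\mathbf{0})$ and $\xi'(v)$ are UUDDUUDD and UDUUDUDD, with heights $2$ and $0$ at position $2$; they are incomparable. Since $\xi'(v)\le\xi'(\mathbf{1}-v)$ as Dyck paths, the pair $(\xi'(v),\xi'(\mathbf{1}-v))=\varphi(\xi)$ for some $\xi\in\TNC_4$, and then $a_{\xi',(\sigma_\xi,\sigma'_\xi)}>0$ although $\xi$ and $\xi'$ are incomparable in your order $\preceq$. Thus the matrix $(a_{\xi',(\sigma_\xi,\sigma'_\xi)})$ is not upper triangular in any linear extension of $\preceq$. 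The ``same counting argument'' does not transfer: a single strand of $\xi'(v)$ may absorb several of the segments crossing a vertical line, so no lower bound on the Dyck-path height is forced. (Your appeal to \cref{thm:equality-of-bijection} for the validity of $\xi(\mathbf{0})$ is also not quite a justification, since that theorem identifies $\xi(\mathbf{0})$ only as a Dyck path, not as a loopless resolution; but this is a minor issue compared to the above.)

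The paper sidesteps all of this. Once (a) gives linear independence of the $B_\xi$ as functions on cactus networks, \cref{thm:L-into-B} forces the rectangular matrix $(a_{\xi,(\sigma,\sigma')})$ to have full row rank; one then chooses any invertible square submatrix and inverts it to write each $B_\xi$ as a linear combination of products $L_\sigma L_{\sigma'}$. No combinatorial triangularity is needed for the lift.
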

\begin{proof}
Fixing $n$, we first show that $B_{\xi}$'s are linearly independent. Recall that for each $\xi\in\TNC_n$, there is a corresponding cactus network $\Gamma(\xi)$ whose medial pairing is $\xi$, and that the number of edges of $\Gamma(\xi)$ equals $\#\CR(\xi)$, the number of crossings of $\xi$. Consider the computation of $\alpha(H)$ (\cref{def:alpha-H}) for some $H\subset 2\Gamma(\xi)$. Each reduction step in \cref{def:alpha-H} does not increase the number of edges. Moreover, if $H$ contains any double edges, then those edges can be removed by step (3) of \cref{def:alpha-H}. As a result, unless $H=\Gamma(\xi)$, $\alpha(H)\in\Z \TNC_n$ is a linear combination of 3-noncrossing matchings with strictly fewer crossings than $\xi$. And when $H=\Gamma(\xi)$, we have $\alpha(H)=\xi$. This means that $B_{\mu}(\Gamma(\xi))=0$ if $\#\CR(\xi)\leq\#\CR(\mu)$ and $B_{\xi}(\Gamma(\xi))=\wt(\Gamma(\xi))\neq0$. 

If we have a nontrivial relation $\sum c_{\xi}B_{\xi}=0$, by choosing the $\xi$ with the smallest number of crossings with $c_{\xi}\neq0$ and evaluating this relation $\sum c_{\xi}B_{\xi}=0$ on the cactus network $\Gamma(\xi)$, we obtain a contradiction. Thus, $\{B_\xi\:|\: \xi\in\TNC_n\}$ is linearly independent. In light of \cref{thm:L-into-B}, this means that the matrix $M=\{a_{\xi,(\sigma,\sigma')}\}$ whose rows are indexed by $\xi\in\TNC_n$ and columns indexed by $(\sigma,\sigma')\in\NP_n\times\NP_n$ (which has way more columns than rows) has full rank. We can pick any representative for $B_{\xi}\in G_{n,2}$ as a linear combination of $L_{\sigma}L_{\sigma}$'s by choosing a maximal invertible submatrix of $M$ and invert it. 

As elements in $G_{n,2}$, $\{B_\xi\:|\: \xi\in\TNC_n\}$ is a basis since they are linearly independent and they span $G_{n,2}$ (\cref{thm:L-into-B}).
\end{proof}

\section{Connections to the Temperley–Lieb immanant}\label{sec:F-basis}
The \emph{Temperley-Lieb immanant} $F_{\tau,T}$ is defined by the second author \cite{lam2015dimers} as a function on a planar bipartite graph, or equivalently a function on the affine cone over the Grassmannian $\Gr(k,n)$. However, we take an orthogonal and self-contained approach and define $F_{\tau,T}\in G_{n,2}$ as a linear combination of the Bush basis $\{B_{\xi}\:|\: \xi\in\TNC_n\}$.

\subsection{Temperley-Lieb immanants in terms of the Bush basis}
\begin{defin}
A $(k,m)$-\emph{partial noncrossing matching} is a pair $(\tau,T)$, such that $\tau$ is a matching of a subset $S(\tau)=S\subset\{1,2,\ldots,m\}$ of even size that is noncrossing, i.e. if $[m]$ are arranged on a circle in order, then edges formed by $\tau$ won't cross, and that $T\subset[m]\setminus S$ with $|S|+2|T|=2k$. 
\end{defin}
Denote the set of $(k,m)$-partial noncrossing matchings as $\mathcal{A}_{k,m}$.  In the following, we take the parameters $k=n-1$ and $m=2n$.

The framework of the next construction is similar to Section 5 of \cite{lam2004electroid}. For each $3$-noncrossing matching $\xi\in\TNC_n$, we define $\beta(\xi)\in\mathbb{Z}\mathcal{A}_{n-1,2n}$ as follows.

Draw $\xi$ on a disk as a medial graph with $n$ strands on $t_1,\ldots,t_{2n}$. We recover the information of cactus networks $\Gamma=\Gamma(\xi)$ and $\Gamma^{\vee}$ together in a single cactus, with a procedure similar to the process described in \cref{sub:medial}. Label the boundary vertices as $1,2,\ldots,2n$ such that $i$ is between $t_{i}$ and $t_{i+1}$ in clockwise order. These boundary vertices are colored black. The strands in $\xi$ divide the disk into regions. For each region, if there are boundary vertices in this region, we identify them together; and if there are no boundary vertices in this region, we add a black vertex for this interior region. These identifications result in a cactus $S_{\zeta}$. Make the intersections of strands in $\xi$ into white vertices and connect each white vertex to the four black vertices that represent the four faces incident to it. Denote this cactus graph as $N(\xi)$. See \cref{fig:N-xi} for an example.
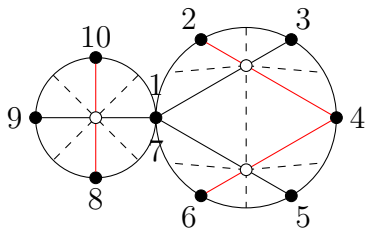
\begin{figure}[h!]
\centering
\begin{tikzpicture}[scale=0.4]
\def\r{3};
\def\rr{2};
\draw (0,0) circle (\r);
\draw (-\r-\rr,0) circle (\rr);
\node[circle, draw=black, fill=black, minimum size=1, scale=0.4,] (2) at (120:\r) {};
\node[circle, draw=black, fill=black, minimum size=1, scale=0.4,] (1) at (180:\r) {};
\node[circle, draw=black, fill=black, minimum size=1, scale=0.4,] (6) at (240:\r) {};
\node[circle, draw=black, fill=black, minimum size=1, scale=0.4,] (5) at (300:\r) {};
\node[circle, draw=black, fill=black, minimum size=1, scale=0.4,] (4) at (0:\r) {};
\node[circle, draw=black, fill=black, minimum size=1, scale=0.4,] (3) at (60:\r) {};
\node[circle, draw=black, fill=black, minimum size=1, scale=0.4,] (10) at (-\r-\rr,\rr) {};
\node[circle, draw=black, fill=black, minimum size=1, scale=0.4,] (8) at (-\r-\rr,-\rr) {};
\node[circle, draw=black, fill=black, minimum size=1, scale=0.4,] (9) at (-\r-2*\rr,0) {};
\draw[red](10)--(8);
\draw(9)--(1);
\draw(5)--(1)--(3);
\draw[red](2)--(4)--(6);
\draw[dashed](-\r-\rr-0.70710678118*\rr,0.70710678118*\rr)to(-\r-\rr+0.70710678118*\rr,-0.70710678118*\rr);
\draw[dashed](-\r-\rr+0.70710678118*\rr,0.70710678118*\rr)to(-\r-\rr-0.70710678118*\rr,-0.70710678118*\rr);
\draw[dashed](0,0.57735026919*\r)to(30:\r);
\draw[dashed](0,0.57735026919*\r)to(150:\r);
\draw[dashed](0,-0.57735026919*\r)to(210:\r);
\draw[dashed](0,-0.57735026919*\r)to(330:\r);
\draw[dashed](0,\r)--(0,-\r);

\node[circle, draw=black, fill=white, minimum size=1, scale=0.4,] at (-\r-\rr,0) {};
\node[circle, draw=black, fill=white, minimum size=1, scale=0.4,] (i1) at (0,0.57735026919*\r) {};
\node[circle, draw=black, fill=white, minimum size=1, scale=0.4,] (i2) at (0,-0.57735026919*\r) {};

\node at (-\r,1.1) {$1$};
\node at (-\r,-1.1) {$7$};
\node at (0:\r+0.7) {$4$};
\node at (60:\r+0.8) {$3$};
\node at (120:\r+0.8) {$2$};
\node at (240:\r+0.8) {$6$};
\node at (300:\r+0.8) {$5$};
\node at (-\r-2*\rr-0.7,0) {$9$};
\node at (-\r-\rr,\rr+0.7) {$10$};
\node at (-\r-\rr,-\rr-0.7) {$8$};
\end{tikzpicture}
\caption{For $\xi=\{(1,9),(2,4),(3,6),(5,7),(8,10)\}$, the bipartite graph $N(\xi)$ with $\xi$ draw in dashed lines, $\Gamma$ in black and $\Gamma^{\vee}$ in red}
\label{fig:N-xi}
\end{figure}

Note that the graph $N(\xi)$ can be viewed as $\Gamma=\Gamma(\xi)$ and $\Gamma^{\vee}$ lying on top of each other, with intersections being the white vertices in $N(\xi)$. Each edge of $\Gamma$ and $\Gamma^{\vee}$ consists of two edges in $N(\xi)$ with endpoints having different colors, called ``half-edges" in $\Gamma$ and $\Gamma^{\vee}$

Let $Z_1\sqcup\cdots\sqcup Z_r$ be the corresponding noncrossing partition of $[2n]$ so that we have $r$ distinct vertices on the boundary of the cactus $S_{\zeta}$. Consider all the possible choices of $A$ that consists of the following data:
\begin{enumerate}
\item For every edge in $\Gamma$ and $\Gamma^{\vee}$, choose one of the two half-edges so that every interior vertex has degree $2$ and every new boundary vertex $Z_i$ has degree at most $2$.
\item The chosen edges can be partitioned into vertex-disjoint cycles and paths, whose endpoints are on the boundary. For each new boundary vertex $Z_i$ of degree $0$, choose one vertex $j\in Z_i$ and put the rest $Z_i\setminus\{j\}$ in $T(A)$; for each $Z_i$ and $Z_{i'}$ of degree $1$ that are the endpoints of a path, choose $j\in Z_i$ and $j'\in Z_{i'}$, put this pair $(j,j')$ in $\tau(A)$, and put the rest $Z_i\setminus\{j\}$ and $Z_{i'}\setminus\{j'\}$ in $T(A)$; for each $Z_i$ of degree $2$, put $Z_i$ in $T(A)$. Let $\cyc(A)$ be the number of cycles in $A$.
\end{enumerate}

\begin{lemma}
For a choice $A$ as above, $|\tau(A)|+2|T(A)|=2n-2$.
\end{lemma}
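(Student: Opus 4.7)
The plan is to carry out a double count organized around Euler's formula applied to the strand diagram of $\xi$. Let $c := |\CR(\xi)|$, so that $|E(\Gamma)| = |E(\Gamma^\vee)| = c$. For $i \in \{0,1,2\}$, let $r_i$ denote the number of new boundary vertices $Z_j$ of degree $i$ in the chosen subgraph $A$, so that $r_0 + r_1 + r_2 = r$.

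First I would apply Euler's formula to the strand diagram of $\xi$, viewed as a CW decomposition of the closed disk. Counting $2n + c$ vertices ($2n$ strand endpoints $t_i$ plus $c$ crossings) and $2n + (2c + n) = 2c + 3n$ edges ($2n$ boundary arcs plus strand pieces, where each strand is cut by its crossings), Euler's formula $V - E + F = 1$ yields $1 + n + c$ regions. Because every vertex $i \in [2n]$ lies on an arc of the disk boundary, each of the $r$ parts of $\zeta$ corresponds to a region meeting the boundary; the remaining $1 + n + c - r$ regions are strictly interior and become the interior black vertices of $N(\xi)$. Together with the $c$ white vertices, this gives $2c + n + 1 - r$ total interior vertices of $N(\xi)$.

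Next I would perform a degree sum at the boundary. The chosen subgraph has exactly $2c$ edges (one half-edge for each edge of $\Gamma$, and one for each edge of $\Gamma^\vee$), hence total degree $4c$. Every interior vertex has degree $2$ by hypothesis, contributing $2(2c + n + 1 - r)$ to the degree sum, so
\[
r_1 + 2r_2 \;=\; 4c - 2(2c + n + 1 - r) \;=\; 2(r - n - 1).
\]
Combining with $r_0 + r_1 + r_2 = r$ yields $2r_0 = 2r - r_1 - (r_1 + 2r_2) = 2n + 2 - r_1$.

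Finally, I would read off $|\tau(A)|$ and $|T(A)|$ directly from the construction. Each of the $r_1$ degree-$1$ parts contributes a single chosen representative to the matched set $S(\tau(A))$, so $|\tau(A)| = r_1$. For $|T(A)|$, the degree-$0$ and degree-$1$ parts contribute $|Z_j| - 1$ elements each, while the degree-$2$ parts contribute $|Z_j|$, summing to $|T(A)| = \sum_j |Z_j| - r_0 - r_1 = 2n - r_0 - r_1$. Substituting everything,
\[
|\tau(A)| + 2|T(A)| \;=\; r_1 + 4n - 2r_0 - 2r_1 \;=\; 4n - (2n + 2 - r_1) - r_1 \;=\; 2n - 2.
\]
The main obstacle is the region count: one must set up the right CW structure (treating the $2n$ boundary arcs as $1$-cells so that Euler's formula returns $\chi = 1$ for the disk) and verify that every boundary region of the strand diagram really does contain a boundary vertex $i$ — a fact that is immediate here since the $i$'s are inserted on every arc between consecutive $t_i$'s. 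Once the region count $1 + n + c$ and the degree identity $r_1 + 2r_2 = 2(r - n - 1)$ are in hand, the rest is routine algebra.
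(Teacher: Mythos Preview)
Your proof is correct and follows essentially the same strategy as the paper's: count the interior vertices of $N(\xi)$, use a degree-sum to determine the boundary degrees, and then read off $|\tau(A)|$ and $|T(A)|$. The only difference is in how the region count is obtained: the paper works in the cactus $S_\zeta$, observing that its $1+2n-r$ circles are cut into $(1+2n-r)+n+c$ regions by the strands (of which $2n$ touch the boundary), whereas you apply Euler's formula directly on the disk to get $1+n+c$ regions (of which $r$ touch the boundary); both yield $1+n+c-r$ interior black vertices, and from that point on the computations coincide, with your $(r_0,r_1,r_2)$ corresponding to the paper's $(n-s+1,\,2s,\,r-n-s-1)$.
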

\begin{proof}
Keep the notations as above. The cactus $S_{\zeta}$ contains $1+2n-r$ circles, split into $(1+2n-r)+n+\#\CR(\xi)$ regions by the $n$ strands in $\xi$. Among these regions, there are $2n$ of them coming from the original boundary vertices, so we have $1+n+\#\CR(\xi)-r$ interior regions, giving rise to $1+n+\#\CR(\xi)-r$ interior black vertices in $N(\xi)$. There are $\#\CR(\xi)$ interior white vertices, so we have $1+n+2\#\CR(\xi)-r$ interior vertices and $r$ boundary vertices in $N(\xi)$ in total. From $A$, we selected $2\#\CR(\xi)$ edges, contributing $4\#\CR(\xi)$ to the total degree of all vertices. All interior vertices have degree $2$, so the sum of degrees of boundary vertices is $2r-2n-2$. Assume that $2s$ of them have degree $1$, that $r-n-s-1$ of them have degree $2$, and that $n-s+1$ of them have degree $0$. We then have $|\tau(A)|=2s$. Also by construction, we put every boundary vertex in $T(A)$ except one vertex of our choice from every new boundary vertex $Z_i$ with degree $\leq1$. Thus, $T(A)=(2n)-(n+s+1)=n-s-1$. This means $|\tau(A)|+2|T(A)|=2n-2$ as desired.
\end{proof}

Then we define \[\beta(\xi):=\sum_{A}2^{\cyc(A)}(\tau(A),T(A))\in\mathbb{Z}\mathcal{A}_{n-1,2n}\] where the sum is over all selections $A$ as above.

\begin{ex}
First consider an extreme example of $\xi=\{(1,2),(3,4),(5,6)\}$ with $n=3$. The cactus $S_{\zeta}$ identifies $2,4,6$ together. Since $\xi$ is noncrossing, we do not need to select any half-edges, and the only choice that we make is choosing a $2$ element subset of $\{2,4,6\}$ to be $T(A)$. As a result,
\[\beta(\{(1,2),(3,4),(5,6)\})=(\emptyset,\{2,4\})+(\emptyset,\{2,6\})+(\emptyset,\{4,6\}).\]
\end{ex}

\begin{ex}
Consider $\xi=\{(1,3),(2,5),(4,6)\}$ with $2$ crossings, with $N(\xi)$ shown here
\begin{center}
\begin{tikzpicture}[scale=0.4]
\def\r{3};
\draw (0,0) circle (\r);
\node[circle, draw=black, fill=black, minimum size=1, scale=0.4,] (4) at (30:\r) {};
\node[circle, draw=black, fill=black, minimum size=1, scale=0.4,] (3) at (90:\r) {};
\node[circle, draw=black, fill=black, minimum size=1, scale=0.4,] (2) at (150:\r) {};
\node[circle, draw=black, fill=black, minimum size=1, scale=0.4,] (1) at (210:\r) {};
\node[circle, draw=black, fill=black, minimum size=1, scale=0.4,] (6) at (270:\r) {};
\node[circle, draw=black, fill=black, minimum size=1, scale=0.4,] (5) at (330:\r) {};
\draw(1)--(3)--(5);
\draw[red](2)--(6)--(4);
\coordinate (i2) at (\r*0.57735026919,0);
\coordinate (i1) at (-\r*0.57735026919,0);
\draw[dashed](120:\r)to(i1)to(240:\r);
\draw[dashed](60:\r)to(i2)to(300:\r);
\draw[dashed](-\r,0)to(\r,0);
\node at (30:\r+0.5) {$4$};
\node at (90:\r+0.5) {$3$};
\node at (150:\r+0.5) {$2$};
\node at (210:\r+0.5) {$1$};
\node at (270:\r+0.5) {$6$};
\node at (330:\r+0.5) {$5$};
\node at (0:\r+0.5) {$t_5$};
\node at (60:\r+0.5) {$t_4$};
\node at (120:\r+0.5) {$t_3$};
\node at (180:\r+0.5) {$t_2$};
\node at (240:\r+0.5) {$t_1$};
\node at (300:\r+0.5) {$t_6$};

\node[circle, draw=black, fill=white, minimum size=1, scale=0.4,] at (i1) {};
\node[circle, draw=black, fill=white, minimum size=1, scale=0.4,] at (i2) {};
\end{tikzpicture}
\end{center}
All the $2^4=16$ selection of the half-edges are valid and we pick a few of them here:
\begin{center}
\begin{tikzpicture}[scale=0.3]
\def\r{3};
\draw (0,0) circle (\r);
\node[circle, draw=black, fill=black, minimum size=1, scale=0.4,] (4) at (30:\r) {};
\node[circle, draw=black, fill=black, minimum size=1, scale=0.4,] (3) at (90:\r) {};
\node[circle, draw=black, fill=black, minimum size=1, scale=0.4,] (2) at (150:\r) {};
\node[circle, draw=black, fill=black, minimum size=1, scale=0.4,] (1) at (210:\r) {};
\node[circle, draw=black, fill=black, minimum size=1, scale=0.4,] (6) at (270:\r) {};
\node[circle, draw=black, fill=black, minimum size=1, scale=0.4,] (5) at (330:\r) {};
\node at (30:\r+0.5) {$4$};
\node at (90:\r+0.5) {$3$};
\node at (150:\r+0.5) {$2$};
\node at (210:\r+0.5) {$1$};
\node at (270:\r+0.5) {$6$};
\node at (330:\r+0.5) {$5$};
\coordinate (i2) at (\r*0.57735026919,0);
\coordinate (i1) at (-\r*0.57735026919,0);
\draw(1)--(i1);
\draw[red](i1)--(6);
\draw[red](i2)--(6);
\draw(i2)--(5);
\end{tikzpicture}
\quad
\begin{tikzpicture}[scale=0.3]
\def\r{3};
\draw (0,0) circle (\r);
\node[circle, draw=black, fill=black, minimum size=1, scale=0.4,] (4) at (30:\r) {};
\node[circle, draw=black, fill=black, minimum size=1, scale=0.4,] (3) at (90:\r) {};
\node[circle, draw=black, fill=black, minimum size=1, scale=0.4,] (2) at (150:\r) {};
\node[circle, draw=black, fill=black, minimum size=1, scale=0.4,] (1) at (210:\r) {};
\node[circle, draw=black, fill=black, minimum size=1, scale=0.4,] (6) at (270:\r) {};
\node[circle, draw=black, fill=black, minimum size=1, scale=0.4,] (5) at (330:\r) {};
\node at (30:\r+0.5) {$4$};
\node at (90:\r+0.5) {$3$};
\node at (150:\r+0.5) {$2$};
\node at (210:\r+0.5) {$1$};
\node at (270:\r+0.5) {$6$};
\node at (330:\r+0.5) {$5$};
\coordinate (i2) at (\r*0.57735026919,0);
\coordinate (i1) at (-\r*0.57735026919,0);
\draw(1)--(i1);
\draw[red](i1)--(2);
\draw[red](i2)--(6);
\draw(i2)--(3);
\end{tikzpicture}
\quad
\begin{tikzpicture}[scale=0.3]
\def\r{3};
\draw (0,0) circle (\r);
\node[circle, draw=black, fill=black, minimum size=1, scale=0.4,] (4) at (30:\r) {};
\node[circle, draw=black, fill=black, minimum size=1, scale=0.4,] (3) at (90:\r) {};
\node[circle, draw=black, fill=black, minimum size=1, scale=0.4,] (2) at (150:\r) {};
\node[circle, draw=black, fill=black, minimum size=1, scale=0.4,] (1) at (210:\r) {};
\node[circle, draw=black, fill=black, minimum size=1, scale=0.4,] (6) at (270:\r) {};
\node[circle, draw=black, fill=black, minimum size=1, scale=0.4,] (5) at (330:\r) {};
\node at (30:\r+0.5) {$4$};
\node at (90:\r+0.5) {$3$};
\node at (150:\r+0.5) {$2$};
\node at (210:\r+0.5) {$1$};
\node at (270:\r+0.5) {$6$};
\node at (330:\r+0.5) {$5$};
\coordinate (i2) at (\r*0.57735026919,0);
\coordinate (i1) at (-\r*0.57735026919,0);
\draw(3)--(i1);
\draw[red](i1)--(6);
\draw[red](i2)--(6);
\draw(i2)--(3);
\end{tikzpicture}
\quad
\begin{tikzpicture}[scale=0.3]
\def\r{3};
\draw (0,0) circle (\r);
\node[circle, draw=black, fill=black, minimum size=1, scale=0.4,] (4) at (30:\r) {};
\node[circle, draw=black, fill=black, minimum size=1, scale=0.4,] (3) at (90:\r) {};
\node[circle, draw=black, fill=black, minimum size=1, scale=0.4,] (2) at (150:\r) {};
\node[circle, draw=black, fill=black, minimum size=1, scale=0.4,] (1) at (210:\r) {};
\node[circle, draw=black, fill=black, minimum size=1, scale=0.4,] (6) at (270:\r) {};
\node[circle, draw=black, fill=black, minimum size=1, scale=0.4,] (5) at (330:\r) {};
\node at (30:\r+0.5) {$4$};
\node at (90:\r+0.5) {$3$};
\node at (150:\r+0.5) {$2$};
\node at (210:\r+0.5) {$1$};
\node at (270:\r+0.5) {$6$};
\node at (330:\r+0.5) {$5$};
\coordinate (i2) at (\r*0.57735026919,0);
\coordinate (i1) at (-\r*0.57735026919,0);
\draw(3)--(i1);
\draw[red](i1)--(2);
\draw[red](i2)--(6);
\draw(i2)--(3);
\end{tikzpicture}
\end{center}
with contributions
\[\beta(\xi)=(\{1,5\},\{6\})+(\{1,2,3,6\},\emptyset)+2(\emptyset,\{3,6\})+(\{2,6\},\{3\})+\cdots\]
where the third term has a coefficient $2$ because there is a cycle.
\end{ex}

\begin{defin}\label{def:F_tau_T}
Let $(\tau,T)\in\mathcal{A}_{n-1,2n}$. Define
\[F_{\tau,T}=\sum_{\xi\in\TNC_n}\beta(\xi)_{\tau,T}B_{\xi}\]
where $\beta(\xi)_{\tau,T}\in\mathbb{Z}_{\geq0}$ is the coefficient of $(\tau,T)$ in $\beta(\xi)$.
\end{defin}

\subsection{Concordance}
We use the notion of concordance from Section 5 of \cite{lam2004electroid}, to be explained further in \cref{sec:geometry}.  In \cref{prop:tfy_relations}, we will give a new derivation of the ``electrical" Pl\"ucker relations (Proposition 5.35 of \cite{lam2004electroid}). 

We say that an $(n-1)$-element subset $I\in \binom{[2n]}{n-1}$ is \emph{concordant} with a noncrossing partition $\sigma$ if each part of $\sigma$ and each part of the dual partition $\Tilde{\sigma}$ contains exactly one element not in $I$, viewing $\sigma$ as a noncrossing partition on $1,3,\ldots,2n-1$ and $\Tilde{\sigma}$ on $2,4,\ldots,2n$. In general, we also say $\sigma$ is concordant with $I$, or $(\sigma, \Tilde{\sigma})$ is concordant with $I$. Given $I\in \binom{[2n]}{n-1}$, we use $\cE(I)\subset \NP_n$ to denote the set of noncrossing partitions concordant with $I$. 
\begin{ex}
Let $\sigma = (1,2,5\mid 3,4\mid 6)$, $\Tilde{\sigma} = (\Tilde{1}\mid \Tilde{2}, \Tilde{4}\mid \Tilde{3}\mid \Tilde{4},\Tilde{5})$. Then $\sigma$ is concordant with the subset $\{3,4,7,9,12\}$, but not concordant with the subset $\{1,3,7,9,12\}$. See \cref{fig:concordant_subsets}. We also draw the Dyck path interpretation for later purposes (\cref{sec:tableaux}).
\begin{figure}[h!]
    \centering
    \begin{tikzpicture}[scale=0.4]
    \def\r{4};
    \draw (0,0) circle (\r);
    \node[circle, draw=black, fill=black, minimum size=1, scale=0.4,] (7) at (0:\r) {};
    \node[draw, regular polygon,regular polygon sides=4] at (0:\r) {};
    \node[circle, draw=black, fill=white, minimum size=1, scale=0.4,] (6) at (30:\r) {};
    \node[circle, draw=black, fill=black, minimum size=1, scale=0.4,] (5) at (60:\r) {};
    \node[circle, draw=black, fill=white, minimum size=1, scale=0.4,] (4) at (90:\r) {};
    \node[draw, regular polygon,regular polygon sides=4] at (90:\r) {};
    \node[circle, draw=black, fill=black, minimum size=1, scale=0.4,] (3) at (120:\r) {};
    \node[draw, regular polygon,regular polygon sides=4] at (120:\r) {};
    \node[circle, draw=black, fill=white, minimum size=1, scale=0.4,] (2) at (150:\r) {};
    \node[circle, draw=black, fill=black, minimum size=1, scale=0.4,] (1) at (180:\r) {};
    \node[circle, draw=black, fill=white, minimum size=1, scale=0.4,] (12) at (210:\r) {};
    \node[draw,regular polygon,regular polygon sides=4] at (210:\r) {};
    \node[circle, draw=black, fill=black, minimum size=1, scale=0.4,] (11) at (240:\r) {};
    \node[circle, draw=black, fill=white, minimum size=1, scale=0.4,] (10) at (270:\r) {};
    \node[circle, draw=black, fill=black, minimum size=1, scale=0.4,] (9) at (300:\r) {};
    \node[draw, regular polygon,regular polygon sides=4] at (300:\r) {};
    \node[circle, draw=black, fill=white, minimum size=1, scale=0.4,] (8) at (330:\r) {};
    
    \node at (0:\r+1) {7};
    \node at (30:\r+1) {6};
    \node at (60:\r+1) {5};
    \node at (90:\r+1) {4};
    \node at (120:\r+1) {3};
    \node at (150:\r+1) {2};
    \node at (180:\r+1) {1};
    \node at (210:\r+1) {12};
    \node at (240:\r+1) {11};
    \node at (270:\r+1) {10};
    \node at (300:\r+1) {9};
    \node at (330:\r+1) {8};
    
    \draw (1) -- (3) -- (9) -- (1);
    \draw (5) -- (7);
    \draw (12) -- (10);
    \draw (4) -- (8);
    \node at (0,-\r-2.5) {Concordant};
    \end{tikzpicture}
    $\quad\quad$
    \begin{tikzpicture}[scale=0.4]
    \def\r{4};
    \draw (0,0) circle (\r);
    \node[circle, draw=black, fill=black, minimum size=1, scale=0.4,] (7) at (0:\r) {};
    \node[draw, regular polygon,regular polygon sides=4] at (0:\r) {};
    \node[circle, draw=black, fill=white, minimum size=1, scale=0.4,] (6) at (30:\r) {};
    \node[circle, draw=black, fill=black, minimum size=1, scale=0.4,] (5) at (60:\r) {};
    \node[circle, draw=black, fill=white, minimum size=1, scale=0.4,] (4) at (90:\r) {};
    \node[circle, draw=black, fill=black, minimum size=1, scale=0.4,] (3) at (120:\r) {};
    \node[draw, regular polygon,regular polygon sides=4] at (120:\r) {};
    \node[circle, draw=black, fill=white, minimum size=1, scale=0.4,] (2) at (150:\r) {};
    \node[circle, draw=black, fill=black, minimum size=1, scale=0.4,] (1) at (180:\r) {};
    \node[draw, regular polygon,regular polygon sides=4] at (180:5) {};
    \node[circle, draw=black, fill=white, minimum size=1, scale=0.4,] (12) at (210:\r) {};
    \node[draw,regular polygon,regular polygon sides=4] at (210:\r) {};
    \node[circle, draw=black, fill=black, minimum size=1, scale=0.4,] (11) at (240:\r) {};
    \node[circle, draw=black, fill=white, minimum size=1, scale=0.4,] (10) at (270:\r) {};
    \node[circle, draw=black, fill=black, minimum size=1, scale=0.4,] (9) at (300:\r) {};
    \node[draw, regular polygon,regular polygon sides=4] at (300:\r) {};
    \node[circle, draw=black, fill=white, minimum size=1, scale=0.4,] (8) at (330:\r) {};
    
    \node at (0:\r+1) {7};
    \node at (30:\r+1) {6};
    \node at (60:\r+1) {5};
    \node at (90:\r+1) {4};
    \node at (120:\r+1) {3};
    \node at (150:\r+1) {2};
    \node at (180:\r+1) {1};
    \node at (210:\r+1) {12};
    \node at (240:\r+1) {11};
    \node at (270:\r+1) {10};
    \node at (300:\r+1) {9};
    \node at (330:\r+1) {8};
    
    \draw (1) -- (3) -- (9) -- (1);
    \draw (5) -- (7);
    \draw (12) -- (10);
    \draw (4) -- (8);
    \node at (0,-\r-2.5) {\textbf{NOT} Concordant};
    \end{tikzpicture}
    
    \begin{tikzpicture}[scale=0.4]
    \def\x{0.7}
     \draw(0,0)--(2,2)--(3,1)--(6,4)--(10,0)--(11,1)--(12,0);
    \node at (0,0) {$\bullet$};
    \node[circle, draw=black, fill=white, minimum size=1, scale=0.4,] at (1,1) {};
    \node[circle, draw=black, fill=black, minimum size=1, scale=0.4,] at (2,2) {};
    \node[circle, draw=black, fill=white, minimum size=1, scale=0.4,] at (3,1) {};
    \node[draw, regular polygon,regular polygon sides=4,scale=0.8] at (3,1) {};
    \node[circle, draw=black, fill=black, minimum size=1, scale=0.4,] at (4,2) {};
    \node[draw, regular polygon,regular polygon sides=4,scale=0.8] at (4,2) {};
    \node[circle, draw=black, fill=white, minimum size=1, scale=0.4,] at (5,3) {};
    \node[circle, draw=black, fill=black, minimum size=1, scale=0.4,] at (6,4) {};
    \node[circle, draw=black, fill=white, minimum size=1, scale=0.4,] at (7,3) {};
    \node[draw, regular polygon,regular polygon sides=4,scale=0.8] at (7,3) {};
    \node[circle, draw=black, fill=black, minimum size=1, scale=0.4,] at (8,2) {};
    \node[circle, draw=black, fill=white, minimum size=1, scale=0.4,] at (9,1) {};
    \node[draw, regular polygon,regular polygon sides=4,scale=0.8] at (9,1) {};
    \node[circle, draw=black, fill=black, minimum size=1, scale=0.4,] at (10,0) {};
    \node[circle, draw=black, fill=white, minimum size=1, scale=0.4,] at (11,1) {};
    \node[circle, draw=black, fill=black, minimum size=1, scale=0.4,] at (12,0) {};
    \node[draw, regular polygon,regular polygon sides=4,scale=0.8] at (12,0) {};
   
    \node at (1,1+\x) {1};
    \node at (2,2+\x) {2};
    \node at (3,1+\x) {3};
    \node at (4,2+\x) {4};
    \node at (5,3+\x) {5};
    \node at (6,4+\x) {6};
    \node at (7,3+\x) {7};
    \node at (8,2+\x) {8};
    \node at (9,1+\x) {9};
    \node at (10,0+\x) {10};
    \node at (11,1+\x) {11};
    \node at (12,0+\x) {12};
    
    \draw[dotted] (0,0) -- (12,0);
    \draw[dotted] (1,1) -- (9,1);
    \draw[dotted] (4,2) -- (8,2);
    \draw[dotted] (5,3) -- (7,3);
    
    \node at (6.5,-1) {Concordant};
    \end{tikzpicture}
    $\quad\quad$
     \begin{tikzpicture}[scale=0.4]
    \def\x{0.7}
     \draw(0,0)--(2,2)--(3,1)--(6,4)--(10,0)--(11,1)--(12,0);
    \node at (0,0) {$\bullet$};
    \node[circle, draw=black, fill=white, minimum size=1, scale=0.4,] at (1,1) {};
    \node[draw, regular polygon,regular polygon sides=4,scale=0.8] at (1,1) {};
    \node[circle, draw=black, fill=black, minimum size=1, scale=0.4,] at (2,2) {};
    \node[circle, draw=black, fill=white, minimum size=1, scale=0.4,] at (3,1) {};
    \node[draw, regular polygon,regular polygon sides=4,scale=0.8] at (3,1) {};
    \node[circle, draw=black, fill=black, minimum size=1, scale=0.4,] at (4,2) {};
    
    \node[circle, draw=black, fill=white, minimum size=1, scale=0.4,] at (5,3) {};
    \node[circle, draw=black, fill=black, minimum size=1, scale=0.4,] at (6,4) {};
    \node[circle, draw=black, fill=white, minimum size=1, scale=0.4,] at (7,3) {};
    \node[draw, regular polygon,regular polygon sides=4,scale=0.8] at (7,3) {};
    \node[circle, draw=black, fill=black, minimum size=1, scale=0.4,] at (8,2) {};
    \node[circle, draw=black, fill=white, minimum size=1, scale=0.4,] at (9,1) {};
    \node[draw, regular polygon,regular polygon sides=4,scale=0.8] at (9,1) {};
    \node[circle, draw=black, fill=black, minimum size=1, scale=0.4,] at (10,0) {};
    \node[circle, draw=black, fill=white, minimum size=1, scale=0.4,] at (11,1) {};
    \node[circle, draw=black, fill=black, minimum size=1, scale=0.4,] at (12,0) {};
    \node[draw, regular polygon,regular polygon sides=4,scale=0.8] at (12,0) {};
   
    \node at (1,1+\x) {1};
    \node at (2,2+\x) {2};
    \node at (3,1+\x) {3};
    \node at (4,2+\x) {4};
    \node at (5,3+\x) {5};
    \node at (6,4+\x) {6};
    \node at (7,3+\x) {7};
    \node at (8,2+\x) {8};
    \node at (9,1+\x) {9};
    \node at (10,0+\x) {10};
    \node at (11,1+\x) {11};
    \node at (12,0+\x) {12};
    
    \draw[dotted] (0,0) -- (12,0);
    \draw[dotted] (1,1) -- (9,1);
    \draw[dotted] (4,2) -- (8,2);
    \draw[dotted] (5,3) -- (7,3);
    \node at (6.5,-1) {\textbf{NOT} Concordant};
    \end{tikzpicture}
    \caption{Example of concordant and not concordant subsets}
    \label{fig:concordant_subsets}
\end{figure}
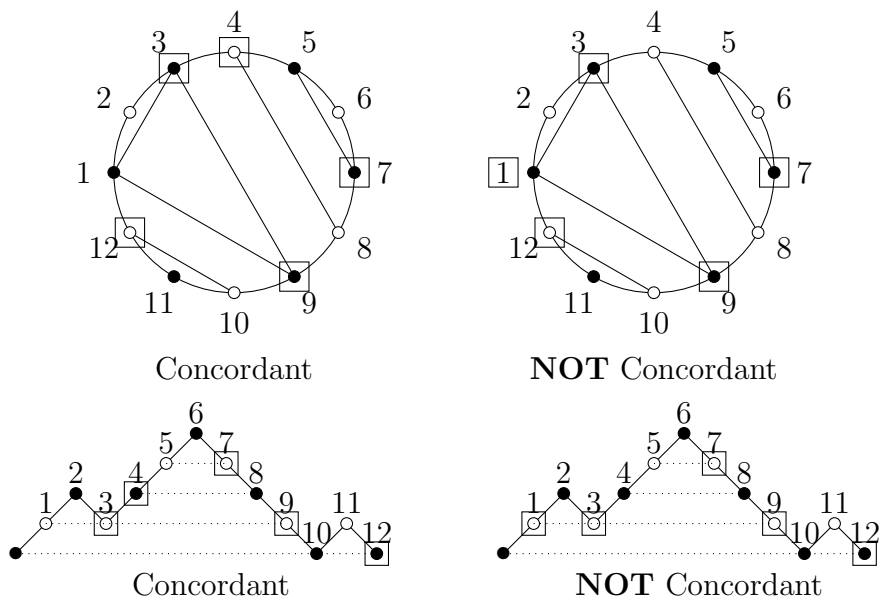
\end{ex}

For $I\in{2n\choose n-1}$, define \begin{equation}
    \label{eq:delta-to-L}
\Delta_I=\sum_{\sigma\in\cE(I)}L_{\sigma}\in G_{n,1}.\end{equation}

We say that a $(k,m)$-partial noncrossing matching $(\tau,T)$ is \emph{compatible} with $I,J\in{m\choose k}$ if
\begin{enumerate}
\item $\tau$ is a matching on $(I\setminus J)\cup(J\setminus I)$ such that each edges contains one vertex in $I\setminus J$ and the other vertex in $J\setminus I$;
\item $T=I\cap J$.
\end{enumerate}
\begin{theorem}\label{thm:delta-to-F}
For $I,J\in {2n\choose n-1}$,
\begin{equation}\label{eq:delta-to-F}
\Delta_I\Delta_J=\sum_{\tau,T}F_{\tau,T}
\end{equation}
where the sum is over all $(\tau,T)\in\mathcal{A}_{n-1,2n}$ compatible with $I,J$.
\end{theorem}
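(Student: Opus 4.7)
The plan is to verify $\Delta_I\Delta_J = \sum_{(\tau,T)} F_{\tau,T}$ in $G_{n,2}$ by expanding both sides into the Bush basis $\{B_\xi\}$ and matching coefficients; this is legitimate because $\{B_\xi\}$ is a basis by \cref{prop:B-basis}. By \eqref{eq:delta-to-L} and \cref{thm:L-into-B},
$$\Delta_I\Delta_J \;=\; \sum_{\sigma\in\cE(I)}\sum_{\sigma'\in\cE(J)} L_\sigma L_{\sigma'} \;=\; \sum_{\xi \in \TNC_n} c_\xi\, B_\xi, \qquad c_\xi = \sum_{\sigma\in\cE(I),\,\sigma'\in\cE(J)} a_{\xi,(\sigma,\sigma')}.$$
By \cref{lem:a-interpretation}, $c_\xi$ counts quadruples $(\sigma,\sigma',F,F')$ such that $\sigma\in\cE(I)$, $\sigma'\in\cE(J)$, and $F\sqcup F' = E(\Gamma(\xi))$ is a splitting into two groves of $\Gamma(\xi)$ with boundary partitions $\sigma,\sigma'$. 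On the right hand side, \cref{def:F_tau_T} gives
$$\sum_{(\tau,T)} F_{\tau,T} \;=\; \sum_{\xi \in \TNC_n} d_\xi\, B_\xi, \qquad d_\xi = \sum_{(\tau,T)\text{ compat.\ with }(I,J)} \beta(\xi)_{\tau,T},$$
and by construction $d_\xi$ is the $2^{\cyc(A)}$-weighted count of selections $A$ in $N(\xi)$ whose datum $(\tau(A),T(A))$ is compatible with $(I,J)$. It now suffices to prove $c_\xi = d_\xi$ for every $\xi \in \TNC_n$.

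This equality will be established by an explicit bijection that uses planar tree--cotree duality for groves: a partition $\Gamma(\xi) = F\sqcup F'$ into two groves of types $(\sigma,\sigma')$ is, via the edge-to-edge pairing between $\Gamma(\xi)$ and $\Gamma^\vee(\xi)$, equivalent to a partition $\Gamma^\vee(\xi) = (F')^\vee \sqcup F^\vee$ into two groves of $\Gamma^\vee(\xi)$ with dual types $(\tilde\sigma,\tilde{\sigma'})$. Laying all four groves on top of one another inside $N(\xi)$ produces, for each edge of $\Gamma(\xi)\cup \Gamma^\vee(\xi)$, a canonical half-edge choice, which I claim is exactly a selection $A$ in the sense of \cref{def:F_tau_T}. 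The concordance constraint that each part of $\sigma,\tilde\sigma$ contains a unique element of $I^c$ (and the analogous statement for $\sigma',\tilde{\sigma'}$ with $J^c$) translates into the distinguished-endpoint choices in step (2) of the construction of $\beta(\xi)$, and the resulting $(\tau(A),T(A))$ is compatible with $(I,J)$ essentially by construction: the unmatched elements of $I^c$ and $J^c$ that get linked by paths in $A$ form $\tau(A)$ with one endpoint in $I\setminus J$ and the other in $J\setminus I$, while the common distinguished elements (which end up at vertices $Z_i$ of degree $0$ or $2$) are precisely those in $I\cap J$ and populate $T(A)$.

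The main obstacle will be carrying out this bijection rigorously. The key technical ingredients are: (i) the tree--cotree duality for planar electrical networks in the form ``the set of dual edges not in $F^\vee$ is a grove of $\Gamma^\vee(\xi)$ with boundary partition $\tilde\sigma$'' whenever $F$ is a grove of $\Gamma(\xi)$ with boundary partition $\sigma$; (ii) matching the degree-at-most-$2$ conditions at the new boundary vertices $Z_i$ of $S_\zeta$ with the cardinality of $Z_i \cap (I^c \cup J^c)$ dictated by concordance, so that the three cases (degree $0$, $1$, $2$ at $Z_i$) correspond to $Z_i$ containing $2$, $1$, $0$ elements of $I^c \cup J^c$ respectively; and (iii) checking that each cycle counted by $\cyc(A)$ is precisely a two-fold ambiguity in recovering the pair $(F,F')$ from $A$---flipping the assignment of $F$ versus $F'$ along the cycle yields an equally valid preimage---thereby accounting for the weight $2^{\cyc(A)}$.
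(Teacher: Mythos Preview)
Your proposal is correct and follows essentially the same approach as the paper: expand both sides in the Bush basis, interpret the coefficients as grove splittings versus selections $A$, and build a weight-preserving map between them using the concordance-induced roots together with tree--cotree duality. The one step you leave implicit is the precise rule for the ``canonical half-edge choice''---the paper makes this explicit by orienting each tree of $F_I\cup\tilde F_I$ (respectively $F_J\cup\tilde F_J$) toward its unique root in $I^c$ (respectively $J^c$) and selecting, for every edge, the half-edge at its source; this rooting is what makes the degree and parity checks in your items (ii) and (iii) go through.
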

\begin{proof}
Both sides are in $G_{n,2}$. By \cref{prop:B-basis}, it suffices to compare the coefficient of $B_{\xi}$ on both sides, for $\xi\in\TNC_n$. We fix $I,J\in {2n\choose n-1}$ and $\xi\in\TNC_n$.

By \cref{thm:L-into-B}, \[\Delta_I\Delta_J=\sum_{\sigma\in \cE(I),\sigma'\in\cE(J)}L_{\sigma}L_{\sigma'}=\sum_{\xi\in\TNC_n}\sum_{\sigma\in \cE(I),\sigma'\in\cE(J)}a_{\xi,(\sigma,\sigma')}B_{\xi}.\]
The coefficient $a_{\xi,(I,J)}:=\sum_{\sigma\in \cE(I),\sigma'\in\cE(J)}a_{\xi,(\sigma,\sigma')}$, by \cref{lem:a-interpretation}, equals the number of ways of splitting $\Gamma:=\Gamma(\xi)$ into two groves $F_I\sqcup F_J$ such that the boundary partition $\sigma(F_I)$ is concordant with $I$ and $\sigma(F_J)$ is concordant with $J$. Note that a splitting of $\Gamma$ corresponds to a splitting $\tilde F_I\sqcup \tilde F_J$ of $\Gamma^{\vee}$ such that if $e\in F_I$, then its corresponding edge $e^{\vee}$ in $\Gamma^{\vee}$ belongs to $F_J$. The boundary partitions of $\tilde F_I$ and $\tilde F_J$ is exactly the dual noncrossing partition $\tilde\sigma(F_I)$ and $\tilde\sigma(F_J)$ respectively, by definition. We will think about $F_I\sqcup F_J$ and $\tilde F_I\sqcup \tilde F_J$ simultaneously.

For the right hand side of \cref{eq:delta-to-F}, summing over $(\tau,T)$ compatible with $I,J$,
\[\sum_{\tau, T}F_{\tau,T}=\sum_{\xi\in\TNC_n}\sum_{\tau,T}\beta(\xi)_{\tau,T}B_{\xi}=\sum_{\xi\in\TNC_n}\sum_{A}2^{\cyc(A)}B_{\xi}\]
where the final sum is over choices of $A$, with each choice $A$ consists of a selection of half-edges of $\Gamma$ and $\Gamma^{\vee}$ and a certain selection of vertices in each group of boundary vertices $Z_i$ identified together (see the beginning of this section), such that $\tau(A),T(A)$ is compatible with $I,J$. 

To establish equality bijectively, we map each partition of groves $F_I\sqcup F_J$ to a choice of $A$ described above, then we count the cardinality of the preimage of each $A$. This map $\psi$ works as follows. The connected components of $F_I\cup \tilde F_I$ give a partition $[2n]$ into $n+1$ parts (Lemma 2.2 of \cite{lam2004electroid}) and since $\sigma(F)$ is concordant with $I$, there is exactly one vertex for each part that does not lie in $I$, called the \emph{root}. We orient each tree in $F_I\cup \tilde F_I$ towards the root (called a \emph{rooting}) and for each edge $e\in F_I\cup \tilde F_I$, choose the half-edge in $N(\xi)$ containing the source. Now, the selection $A$ consists of these half-edges $H$, together with $T(A)=I\cap J$ and $\tau(A)$ pairs up $I\setminus J$ and $J\setminus I$ based on the paths formed by these half-edges. See \cref{fig:proof-Delta-into-F}.
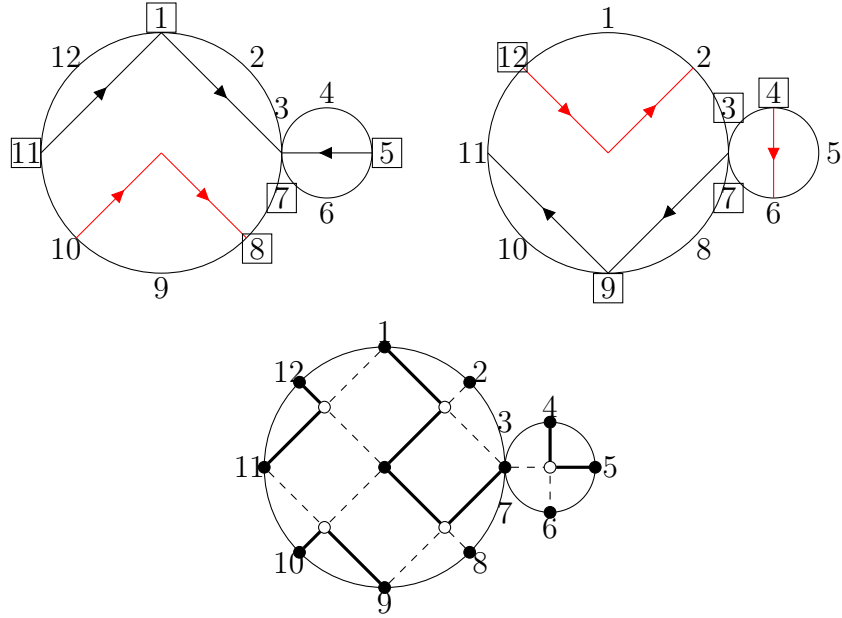
\begin{figure}[h!]
\centering
\begin{tikzpicture}[scale=0.4]
\def\r{4};
\def\rr{1.5};
\draw (0,0) circle (\r);
\draw (\r+\rr,0) circle (\rr);
\node at (90:\r+0.5) {$1$}; 
\node at (135:\r+0.5) {$12$}; 
\node at (180:\r+0.5) {$11$}; 
\node at (225:\r+0.5) {$10$}; 
\node at (270:\r+0.5) {$9$}; 
\node at (315:\r+0.5) {$8$}; 
\node at (45:\r+0.5) {$2$}; 
\node at (\r,1.5) {$3$};
\node at (\r,-1.5) {$7$};
\node at (\r+\rr,\rr+0.5) {$4$};
\node at (\r+\rr,-\rr-0.5) {$6$};
\node at (\r+2*\rr+0.5,0) {$5$};
\draw(\r+2*\rr,0)--(\r,0)node[currarrow,sloped,allow upside down,pos=0.5] {};
\draw(-\r,0)--(0,\r)node[currarrow,sloped,allow upside down,pos=0.5] {};
\draw(0,\r)--(\r,0)node[currarrow,sloped,allow upside down,pos=0.5] {};
\draw[red](225:\r)--(0,0)node[color=red,currarrow,sloped,allow upside down,pos=0.5] {};
\draw[red](0,0)--(315:\r)node[color=red,currarrow,sloped,allow upside down,pos=0.5] {};
\node[draw, regular polygon,regular polygon sides=4] at (90:\r+0.5) {};
\node[draw, regular polygon,regular polygon sides=4] at (\r+2*\rr+0.5,0) {};
\node[draw, regular polygon,regular polygon sides=4] at (\r,-1.5) {};
\node[draw, regular polygon,regular polygon sides=4] at (315:\r+0.5) {};
\node[draw, regular polygon,regular polygon sides=4] at (180:\r+0.5) {};
\end{tikzpicture}
\quad
\begin{tikzpicture}[scale=0.4]
\def\r{4};
\def\rr{1.5};
\draw (0,0) circle (\r);
\draw (\r+\rr,0) circle (\rr);
\node at (90:\r+0.5) {$1$}; 
\node at (135:\r+0.5) {$12$}; 
\node at (180:\r+0.5) {$11$}; 
\node at (225:\r+0.5) {$10$}; 
\node at (270:\r+0.5) {$9$}; 
\node at (315:\r+0.5) {$8$}; 
\node at (45:\r+0.5) {$2$}; 
\node at (\r,1.5) {$3$};
\node at (\r,-1.5) {$7$};
\node at (\r+\rr,\rr+0.5) {$4$};
\node at (\r+\rr,-\rr-0.5) {$6$};
\node at (\r+2*\rr+0.5,0) {$5$};
\draw(0,-\r)--(-\r,0)node[currarrow,sloped,allow upside down,pos=0.5] {};
\draw(\r,0)--(0,-\r)node[currarrow,sloped,allow upside down,pos=0.5] {};
\draw[red](135:\r)--(0,0)node[color=red,currarrow,sloped,allow upside down,pos=0.5] {};
\draw[red](0,0)--(45:\r)node[color=red,currarrow,sloped,allow upside down,pos=0.5] {};
\draw[red](\r+\rr,\rr)--(\r+\rr,-\rr)node[color=red,currarrow,sloped,allow upside down,pos=0.5] {};
\node[draw, regular polygon,regular polygon sides=4] at (\r,1.5) {};
\node[draw, regular polygon,regular polygon sides=4] at (\r,-1.5) {};
\node[draw, regular polygon,regular polygon sides=4] at (\r+\rr,\rr+0.5) {};
\node[draw, regular polygon,regular polygon sides=4] at (270:\r+0.5) {};
\node[draw, regular polygon,regular polygon sides=4] at (135:\r+0.5) {};
\end{tikzpicture}

\begin{tikzpicture}[scale=0.4]
\def\r{4};
\def\rr{1.5};
\draw (0,0) circle (\r);
\draw (\r+\rr,0) circle (\rr);
\node at (90:\r+0.5) {$1$}; 
\node at (135:\r+0.5) {$12$}; 
\node at (180:\r+0.5) {$11$}; 
\node at (225:\r+0.5) {$10$}; 
\node at (270:\r+0.5) {$9$}; 
\node at (315:\r+0.5) {$8$}; 
\node at (45:\r+0.5) {$2$}; 
\node at (\r,1.5) {$3$};
\node at (\r,-1.5) {$7$};
\node at (\r+\rr,\rr+0.5) {$4$};
\node at (\r+\rr,-\rr-0.5) {$6$};
\node at (\r+2*\rr+0.5,0) {$5$};
\node[circle, draw=black, fill=black, minimum size=1, scale=0.4,] (1) at (90:\r) {};
\node[circle, draw=black, fill=black, minimum size=1, scale=0.4,] (12) at (135:\r) {};
\node[circle, draw=black, fill=black, minimum size=1, scale=0.4,] (11) at (180:\r) {};
\node[circle, draw=black, fill=black, minimum size=1, scale=0.4,] (10) at (225:\r) {};
\node[circle, draw=black, fill=black, minimum size=1, scale=0.4,] (9) at (270:\r) {};
\node[circle, draw=black, fill=black, minimum size=1, scale=0.4,] (8) at (315:\r) {};
\node[circle, draw=black, fill=black, minimum size=1, scale=0.4,] (37) at (0:\r) {};
\node[circle, draw=black, fill=black, minimum size=1, scale=0.4,] (2) at (45:\r) {};
\node[circle, draw=black, fill=black, minimum size=1, scale=0.4,] (4) at (\r+\rr,\rr) {};
\node[circle, draw=black, fill=black, minimum size=1, scale=0.4,] (5) at (\r+2*\rr,0) {};
\node[circle, draw=black, fill=black, minimum size=1, scale=0.4,] (6) at (\r+\rr,-\rr) {};
\node[circle, draw=black, fill=black, minimum size=1, scale=0.4,] (0) at (0,0) {};
\draw[dashed](1)--(11)--(9)--(37)--(1);
\draw[dashed](2)--(0)--(10);
\draw[dashed](8)--(0)--(12);
\draw[dashed](4)--(6);
\draw[dashed](37)--(5);

\node[circle, draw=black, fill=white, minimum size=1, scale=0.4,] (i1) at (135:0.70710678118*\r) {};
\node[circle, draw=black, fill=white, minimum size=1, scale=0.4,] (i2) at (45:0.70710678118*\r) {};
\node[circle, draw=black, fill=white, minimum size=1, scale=0.4,] (i3) at (315:0.70710678118*\r) {};
\node[circle, draw=black, fill=white, minimum size=1, scale=0.4,] (i4) at (225:0.70710678118*\r) {};
\node[circle, draw=black, fill=white, minimum size=1, scale=0.4,] (i5) at (\r+\rr,0) {};

\draw [very thick] (11) -- (i1);
\draw [very thick] (1) -- (i2);
\draw [very thick] (5) -- (i5);
\draw [very thick] (10) -- (i4);
\draw [very thick] (0) -- (i3);
\draw [very thick] (12) -- (i1);
\draw [very thick] (0) -- (i2);
\draw [very thick] (37) -- (i3);
\draw [very thick] (9) -- (i4);
\draw [very thick] (4) -- (i5);
\end{tikzpicture}
\caption{The map $\psi$ in the proof of \cref{thm:delta-to-F} with $\xi=(1,10|2,9|3,12|4,6|5,7|8,11)$, $I=\{1,5,7,8,11\}$, $J=\{3,4,7,9,12\}.$ Top left: $F_I\cup \tilde F_I$; top right: $F_J\cup\tilde F_J$; bottom: the selection $A$ in $N(\xi)$ with edges in $\Gamma$ colored in black and edges in $\Gamma^{\vee}$ colored in red.}
\label{fig:proof-Delta-into-F}
\end{figure}

We first justify that this map $\psi$ is well-defined, i.e. the selection $A$ obtained satisfies that every internal vertex has degree $2$ in $H$, every boundary vertex has degree at most $2$ in $H$, and that the pair $(\tau(A),T(A))$ is compatible with $I,J$. Recall that $H\subset N(\xi)$ is the set of half-edges that we selected. To begin with, each white interior vertex has degree $2$. Each black interior vertex is not a a root in its connected component in $F_I\cup \tilde F_I$ so it has outdegree $1$ in the rooting, gaining one degree in $H$; similarly, it gains one degree in $H$ from $F_J\cup\tilde F_J$ so it has degree $2$ in total. Likewise, each boundary black vertex is either a root in $F_I\cup\tilde F_I$, in which case it gains no degree in $H$, or not a root in $F_I\cup\tilde F_I$, in which case it gains one degree in $H$. So considering $F_J\cup\tilde F_J$ as well, each boundary vertex has degree $\leq2$.

The selection $A$ also consists of the data of $\tau(A)$ and $T(A)$ and we need to show the following: First, from $\psi$, $\tau(A)$ can be chosen so that $I\setminus J$ is paired with $J\setminus I$ given by the paths formed by the half-edges described above; and second, $T(A)=I\cap J$ consists of $|Z_k|-1$ vertices from $Z_k$ if $Z_k$ is of degree $0$ or $1$ in $H$ and consists of all vertices from $Z_k$ if $Z_k$ is of degree $2$ in $H$, where $Z_1,\ldots,Z_m$ is the set of boundary vertices in the cactus. The claim concerning $T(A)$ is clear by counting: $v\in[2n]$ contributes degree $x$ to $Z_k\ni v$ if $v$ appears $x$ times in $I\cup J$ as a multiset. As for $\tau(A)$, it suffices to show that we cannot form a path with the two endpoints both in $I\setminus J$. This is done by a parity argument. Notice that as we traverse half-edges consecutively in a path or a cycle of $H$, we alternate between black vertices and white vertices, and alternate between edges in $F_I\cup\tilde F_I$ and edges in $F_J\cup\tilde F_J$. Thus, if a path in $H$ starts with a vertex $v$ and an edge in $F_I\cup\tilde F_I$, ends with a vertex $w$ and an edge in $F_J\cup\tilde F_J$, then we know that $v\notin I$ and $w\notin J$, meaning that $v\in J$ and $w\in I$ as they need to have degree $1$. This shows that $(\tau(A),T(A))$ is compatible with $I,J$.

Now fix a selection $A$ such that $(\tau(A),T(A))$ is compatible with $I,J$. We count the number of partitions of $\Gamma$ into groves $F_I\sqcup F_J$ such that $\sigma(F_I),\sigma(F_J)$ are concordant with $I,J$ respectively. For each path $P$ in $A$, we can without loss of generality assume that it starts with $v\in I\setminus J$ and ends with $w\in J\setminus I$, as $\tau(A)$ is a matching that pairs $I\setminus J$ with $J\setminus I$. As we traverse the half-edges in $P$, the first edge incident to $v$ needs to be assigned to $F_J\cup \tilde F_J$ for a rooting to exist. Then the half-edges must alternate between $F_I\cup\tilde F_I$ and $F_J\cup\tilde F_J$ since for a rooting to exist, each internal black vertex has outdegree $1$ in both $F_I\cup\tilde F_I$ and $F_J\cup\tilde F_J$. This alternating property allows us to assign all edges along this path $P$. Likewise, every cycle $C$ in $A$ has two possible assignments to $F_I\cup\tilde F_I$ and $F_J\cup\tilde F_J$ via the alternating property (see \cref{fig:proof-delta-into-F-cycle}). Thus, every edge in $\Gamma$ and $\Gamma^{\vee}$ is now assigned to either $F_I\cup\tilde F_I$ or $F_J\cup\tilde F_J$. Notice that every directed tree whose internal vertices have outdegree $1$ has a leaf that is the unique source, i.e. a root. So the concordant property with $I$ and $J$ is exactly the same as the existence of a rooting. All $2^{\cyc(A)}$ choices satisfy the requirement. This finishes the comparison of coefficients of $B_{\xi}$ on both sides of \cref{eq:delta-to-F} as desired.
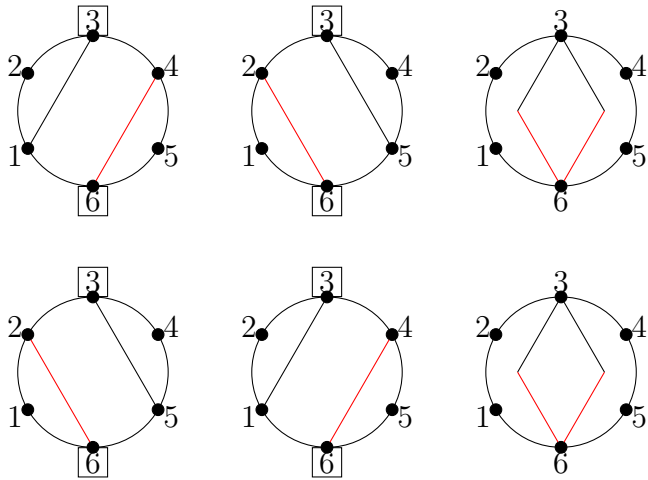
\begin{figure}[h!]
\centering
\begin{tikzpicture}[scale=0.4]
\def\r{2.5};
\draw (0,0) circle (\r);
\node[circle, draw=black, fill=black, minimum size=1, scale=0.4,] (4) at (30:\r) {};
\node[circle, draw=black, fill=black, minimum size=1, scale=0.4,] (3) at (90:\r) {};
\node[circle, draw=black, fill=black, minimum size=1, scale=0.4,] (2) at (150:\r) {};
\node[circle, draw=black, fill=black, minimum size=1, scale=0.4,] (1) at (210:\r) {};
\node[circle, draw=black, fill=black, minimum size=1, scale=0.4,] (6) at (270:\r) {};
\node[circle, draw=black, fill=black, minimum size=1, scale=0.4,] (5) at (330:\r) {};
\node at (30:\r+0.5) {$4$};
\node at (90:\r+0.5) {$3$};
\node at (150:\r+0.5) {$2$};
\node at (210:\r+0.5) {$1$};
\node at (270:\r+0.5) {$6$};
\node at (330:\r+0.5) {$5$};
\node[draw, regular polygon,regular polygon sides=4] at (0,\r+0.5) {};
\node[draw, regular polygon,regular polygon sides=4] at (0,-\r-0.5) {};
\draw(1)--(3);
\draw[red](4)--(6);
\end{tikzpicture}
\quad
\begin{tikzpicture}[scale=0.4]
\def\r{2.5};
\draw (0,0) circle (\r);
\node[circle, draw=black, fill=black, minimum size=1, scale=0.4,] (4) at (30:\r) {};
\node[circle, draw=black, fill=black, minimum size=1, scale=0.4,] (3) at (90:\r) {};
\node[circle, draw=black, fill=black, minimum size=1, scale=0.4,] (2) at (150:\r) {};
\node[circle, draw=black, fill=black, minimum size=1, scale=0.4,] (1) at (210:\r) {};
\node[circle, draw=black, fill=black, minimum size=1, scale=0.4,] (6) at (270:\r) {};
\node[circle, draw=black, fill=black, minimum size=1, scale=0.4,] (5) at (330:\r) {};
\node at (30:\r+0.5) {$4$};
\node at (90:\r+0.5) {$3$};
\node at (150:\r+0.5) {$2$};
\node at (210:\r+0.5) {$1$};
\node at (270:\r+0.5) {$6$};
\node at (330:\r+0.5) {$5$};
\node[draw, regular polygon,regular polygon sides=4] at (0,\r+0.5) {};
\node[draw, regular polygon,regular polygon sides=4] at (0,-\r-0.5) {};
\draw(5)--(3);
\draw[red](2)--(6);
\end{tikzpicture}
\quad
\begin{tikzpicture}[scale=0.4]
\def\r{2.5};
\draw (0,0) circle (\r);
\node[circle, draw=black, fill=black, minimum size=1, scale=0.4,] (4) at (30:\r) {};
\node[circle, draw=black, fill=black, minimum size=1, scale=0.4,] (3) at (90:\r) {};
\node[circle, draw=black, fill=black, minimum size=1, scale=0.4,] (2) at (150:\r) {};
\node[circle, draw=black, fill=black, minimum size=1, scale=0.4,] (1) at (210:\r) {};
\node[circle, draw=black, fill=black, minimum size=1, scale=0.4,] (6) at (270:\r) {};
\node[circle, draw=black, fill=black, minimum size=1, scale=0.4,] (5) at (330:\r) {};
\node at (30:\r+0.5) {$4$};
\node at (90:\r+0.5) {$3$};
\node at (150:\r+0.5) {$2$};
\node at (210:\r+0.5) {$1$};
\node at (270:\r+0.5) {$6$};
\node at (330:\r+0.5) {$5$};
\coordinate (i2) at (\r*0.57735026919,0);
\coordinate (i1) at (-\r*0.57735026919,0);
\draw(3)--(i1);
\draw[red](i1)--(6);
\draw[red](i2)--(6);
\draw(i2)--(3);
\end{tikzpicture}

\

\begin{tikzpicture}[scale=0.4]
\def\r{2.5};
\draw (0,0) circle (\r);
\node[circle, draw=black, fill=black, minimum size=1, scale=0.4,] (4) at (30:\r) {};
\node[circle, draw=black, fill=black, minimum size=1, scale=0.4,] (3) at (90:\r) {};
\node[circle, draw=black, fill=black, minimum size=1, scale=0.4,] (2) at (150:\r) {};
\node[circle, draw=black, fill=black, minimum size=1, scale=0.4,] (1) at (210:\r) {};
\node[circle, draw=black, fill=black, minimum size=1, scale=0.4,] (6) at (270:\r) {};
\node[circle, draw=black, fill=black, minimum size=1, scale=0.4,] (5) at (330:\r) {};
\node at (30:\r+0.5) {$4$};
\node at (90:\r+0.5) {$3$};
\node at (150:\r+0.5) {$2$};
\node at (210:\r+0.5) {$1$};
\node at (270:\r+0.5) {$6$};
\node at (330:\r+0.5) {$5$};
\node[draw, regular polygon,regular polygon sides=4] at (0,\r+0.5) {};
\node[draw, regular polygon,regular polygon sides=4] at (0,-\r-0.5) {};
\draw(5)--(3);
\draw[red](2)--(6);
\end{tikzpicture}
\quad
\begin{tikzpicture}[scale=0.4]
\def\r{2.5};
\draw (0,0) circle (\r);
\node[circle, draw=black, fill=black, minimum size=1, scale=0.4,] (4) at (30:\r) {};
\node[circle, draw=black, fill=black, minimum size=1, scale=0.4,] (3) at (90:\r) {};
\node[circle, draw=black, fill=black, minimum size=1, scale=0.4,] (2) at (150:\r) {};
\node[circle, draw=black, fill=black, minimum size=1, scale=0.4,] (1) at (210:\r) {};
\node[circle, draw=black, fill=black, minimum size=1, scale=0.4,] (6) at (270:\r) {};
\node[circle, draw=black, fill=black, minimum size=1, scale=0.4,] (5) at (330:\r) {};
\node at (30:\r+0.5) {$4$};
\node at (90:\r+0.5) {$3$};
\node at (150:\r+0.5) {$2$};
\node at (210:\r+0.5) {$1$};
\node at (270:\r+0.5) {$6$};
\node at (330:\r+0.5) {$5$};
\node[draw, regular polygon,regular polygon sides=4] at (0,\r+0.5) {};
\node[draw, regular polygon,regular polygon sides=4] at (0,-\r-0.5) {};
\draw(1)--(3);
\draw[red](4)--(6);
\end{tikzpicture}
\quad
\begin{tikzpicture}[scale=0.4]
\def\r{2.5};
\draw (0,0) circle (\r);
\node[circle, draw=black, fill=black, minimum size=1, scale=0.4,] (4) at (30:\r) {};
\node[circle, draw=black, fill=black, minimum size=1, scale=0.4,] (3) at (90:\r) {};
\node[circle, draw=black, fill=black, minimum size=1, scale=0.4,] (2) at (150:\r) {};
\node[circle, draw=black, fill=black, minimum size=1, scale=0.4,] (1) at (210:\r) {};
\node[circle, draw=black, fill=black, minimum size=1, scale=0.4,] (6) at (270:\r) {};
\node[circle, draw=black, fill=black, minimum size=1, scale=0.4,] (5) at (330:\r) {};
\node at (30:\r+0.5) {$4$};
\node at (90:\r+0.5) {$3$};
\node at (150:\r+0.5) {$2$};
\node at (210:\r+0.5) {$1$};
\node at (270:\r+0.5) {$6$};
\node at (330:\r+0.5) {$5$};
\coordinate (i2) at (\r*0.57735026919,0);
\coordinate (i1) at (-\r*0.57735026919,0);
\draw(3)--(i1);
\draw[red](i1)--(6);
\draw[red](i2)--(6);
\draw(i2)--(3);
\end{tikzpicture}
\caption{An example of two partitions achieving the same cycle in $N(\xi)$ with $\xi=(13|25|46)$, $I,J=\{3,6\}$. Left: $F_I\cup \tilde F_I$; middle: $F_J\cup\tilde F_J$; right: the same selection $A$ of $N(\xi)$}
\label{fig:proof-delta-into-F-cycle}
\end{figure}
\end{proof}

We have now established the following commutative diagram of various expansions:
\begin{equation}
\begin{tikzcd}[column sep=huge]
\Delta_I\Delta_J \arrow[r,"\text{Theorem~}\ref{thm:delta-to-F}"] \arrow[d,"\text{concordance}"] & F_{\tau,T} \arrow[d,"\text{Definition~}\ref{def:F_tau_T}"]  \\
L_{\sigma}L_{\sigma'} \arrow[r,"\text{Theorem~}\ref{thm:L-into-B}"] & B_{\xi}
\end{tikzcd}.
\end{equation}
The existence of $F_{\tau,T}$'s satisfying \cref{thm:delta-to-F} is equivalent to the Pl\"ucker relations of the Pl\"ucker ring (see Theorem 3.10 and Theorem 3.1 of \cite{lam2015dimers}).  Using \cref{eq:delta-to-L}, we deduce the ``electrical" Pl\"ucker relations.
\begin{prop}[Proposition 5.35 of \cite{lam2004electroid}]\label{prop:tfy_relations}
For each $1\leq k < n-1$ and each $I = \{i_1 < i_2 < \dots < i_{n-1}\}, J = \{j_1 < j_2 < \dots  < j_{n-1}\}$, we have
\[\sum_{\sigma\in \cE(I),\kappa\in \cE(J)}L_{\sigma}L_{\kappa} = \sum_{I', J'}(-1)^{a(I', J')}\sum_{\sigma\in \cE(I'), \kappa\in \cE(J')}L_\sigma  L_\kappa\]
where:
\begin{enumerate}
\item the summation is over $I', J'$ obtained from swapping the last $k$ indices in $J$ with any $k$ indices in $I$, keeping the order in both;
\item $a(I', J')$ is the total number of swaps needed to put both subsets $I'$ and $J'$ in order.
\end{enumerate}
\end{prop}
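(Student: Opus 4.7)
My plan is to derive \cref{prop:tfy_relations} directly from the classical Plücker relations in $\C[\Gr(n-1,2n)]$, using the expansion $\Delta_I = \sum_{\sigma \in \cE(I)} L_\sigma$ from \eqref{eq:delta-to-L} as a dictionary between the two sides.

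First, I will observe that the embedding $\iota: \X_n \hookrightarrow \Gr(n-1,2n)$ constructed in \cite{lam2004electroid} induces a surjective pullback on homogeneous coordinate rings $\iota^{*}: \C[\Gr(n-1,2n)] \twoheadrightarrow G_n \otimes \C$, under which each classical Plücker coordinate $\Delta_I$ is sent to $\sum_{\sigma \in \cE(I)} L_\sigma$. This identification is precisely the concordance formula of \cite{lam2004electroid} recalled just before \eqref{eq:delta-to-L}, and it is consistent with how the symbol $\Delta_I$ is being used in $G_{n,1}$ throughout this section.

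Next, I will invoke the classical Plücker relation in the shape asserted by the proposition: for $1 \leq k < n-1$ and any two $(n-1)$-subsets $I, J \subset [2n]$,
\[
\Delta_I \Delta_J \;=\; \sum_{I',J'} (-1)^{a(I',J')}\, \Delta_{I'} \Delta_{J'},
\]
where $(I',J')$ runs over pairs obtained by swapping the last $k$ indices of $J$ with an arbitrary $k$-subset of the entries of $I$ (keeping the order in each), and $a(I',J')$ is the sorting sign. This is a standard Grassmann--Plücker identity (see, e.g., \cite{Sturmfels-White}); the only combinatorial check is to confirm that the sign $a(I',J')$ as defined here matches the sign appearing in the usual quadratic Plücker relation.

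Finally, I will apply $\iota^{*}$ to the classical identity and substitute $\iota^{*}(\Delta_K) = \sum_{\sigma \in \cE(K)} L_\sigma$ on each side. Expanding each product $\Delta_K\Delta_{K'}$ as $\sum_{\sigma\in\cE(K),\,\kappa\in\cE(K')} L_\sigma L_\kappa$ yields exactly the asserted electrical Plücker relation in $G_n$. No substantive obstacle arises: the reduction is a direct substitution once the classical relation and the concordance dictionary are in place. An alternative route that avoids explicit appeal to $\iota^{*}$ is to use \cref{thm:delta-to-F} together with \cref{prop:B-basis}: since the right-hand side of \cref{thm:delta-to-F} depends on $(I,J)$ only through the set of compatible partial matchings $(\tau,T)$, and since the $B_\xi$'s (and, by \cref{def:F_tau_T}, the $F_{\tau,T}$'s) are linearly independent in $G_{n,2}$, matching coefficients of each $F_{\tau,T}$ on both sides of the proposed identity reduces the statement to a combinatorial sign-counting check that is tantamount to the classical Plücker relation itself.
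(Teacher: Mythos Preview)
Your proposal is correct, and your \emph{alternative} route is essentially the paper's own argument: the paper deduces \cref{prop:tfy_relations} from \cref{thm:delta-to-F}, noting (via \cite[Theorems~3.1 and~3.10]{lam2015dimers}) that the existence of elements $F_{\tau,T}$ satisfying $\Delta_I\Delta_J=\sum_{(\tau,T)}F_{\tau,T}$ is equivalent to the Pl\"ucker relations, and then substituting \eqref{eq:delta-to-L}.

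One remark on emphasis: your \emph{primary} route---pulling back the classical Pl\"ucker relations along the embedding $\iota$ of \cite{lam2004electroid}---is mathematically valid but runs counter to the paper's stated goal of giving a \emph{new} derivation independent of the construction in \cite{lam2004electroid}. Indeed, the paper's \cref{thm:embed} re-establishes the ring homomorphism $\iota^*$ precisely by using \cref{thm:delta-to-F} and the Bush basis, rather than by citing \cite{lam2004electroid}. So while both of your routes lead to the result, the alternative one is the intended internal argument, and your primary one would short-circuit the paper's self-contained development.
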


\section{Tableaux basis of the Grove algebra}\label{sec:tableaux}
In this section, we provide a basis of $G_n$, building on results in \cite{bychkov2021electrical,chepuri2021electrical}. We will select a set of relations $R$ from \cref{prop:tfy_relations} and show that $R$ is a Gr\"obner basis of the ideal $\II_n$, which is defined to be the ideal of polynomials in the variables $\{L_{\sigma}\}_{\sigma\in\NP_n}$ that vanish on all electrical networks $\Gamma$.

\def\C{{\mathbb C}}
We work over $\C$ in this section, and $G_n$ denotes $G_n \otimes \C$.

\subsection{Embedding into the Grassmannian}\label{sec:geometry}

Let $R(n-1,2n)$ denote the Pl\"ucker ring, the homogeneous coordinate ring of the Grassmannian $\Gr(n-1,2n)$.
\begin{theorem}\label{thm:embed}
The formula
\[\Delta_I=\sum_{\sigma\in\cE(I)}L_{\sigma}\in G_{n,1} \tag{\cref{eq:delta-to-L}}\]
induces a graded ring homomorphism $\iota^*: R(n-1,2n) \to G_n$.
\end{theorem}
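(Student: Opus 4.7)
The plan is to verify that the substitution $\Delta_I \mapsto \sum_{\sigma \in \cE(I)} L_\sigma$ defines a well-defined graded ring homomorphism out of the Pl\"ucker ring $R(n-1,2n)$, which is enough since the substitution is degree-preserving (each $\Delta_I$ is sent to a degree-one element of $G_n$) and hence extends uniquely from the polynomial ring $\C[\Delta_I \mid I \in \binom{[2n]}{n-1}]$ to a graded ring homomorphism $\tilde\iota^*: \C[\Delta_I] \to G_n$. The content of the theorem is thus that $\tilde\iota^*$ descends to the quotient $R(n-1,2n) = \C[\Delta_I]/I_{\mathrm{Pl}}$, i.e., that every element of the Pl\"ucker ideal $I_{\mathrm{Pl}}$ lies in $\ker(\tilde\iota^*)$.

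By \cref{thm:plucker_basis}(1), the quadratic Pl\"ucker relations generate $I_{\mathrm{Pl}}$, so it suffices to show each such quadratic relation maps to zero under $\tilde\iota^*$. I would invoke the standard form of the quadratic Pl\"ucker relations used in \cref{prop:tfy_relations}: for every $1 \leq k < n-1$ and every $I, J \in \binom{[2n]}{n-1}$,
\[
\Delta_I \Delta_J \;=\; \sum_{I',J'} (-1)^{a(I',J')} \Delta_{I'} \Delta_{J'},
\]
where the sum ranges over the $k$-element swaps between $I$ and $J$ described in \cref{prop:tfy_relations}. Applying $\tilde\iota^*$ to both sides replaces each product $\Delta_A \Delta_B$ by $\sum_{\sigma \in \cE(A), \kappa \in \cE(B)} L_\sigma L_\kappa$, and the resulting equation is exactly the identity of \cref{prop:tfy_relations}, which already holds in $G_n$. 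Therefore every generator of $I_{\mathrm{Pl}}$ lies in $\ker(\tilde\iota^*)$, and so $\tilde\iota^*$ factors through $R(n-1,2n)$ to yield the desired graded homomorphism $\iota^*$.

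There is no significant obstacle remaining at this stage of the paper: the combinatorial heart of the argument, namely matching the Pl\"ucker swap expansion with the concordance-weighted grove expansion, has already been carried out in \cref{thm:delta-to-F} and its consequence \cref{prop:tfy_relations}. The only point that warrants a brief comment in the writeup is that the quadratic relations used in \cref{prop:tfy_relations} really do generate the full Pl\"ucker ideal (not just some subideal), which follows from \cref{thm:plucker_basis}(1); after that remark the proof reduces to a one-line verification.
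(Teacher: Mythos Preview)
Your proposal is correct and follows essentially the same route as the paper: define the map on the free polynomial ring in the $\Delta_I$ and then check that the Pl\"ucker relations lie in its kernel, the combinatorial input being \cref{thm:delta-to-F}. The only cosmetic difference is that you invoke its corollary \cref{prop:tfy_relations} directly as polynomial identities in $G_n$, whereas the paper cites the abstract statement from \cite{lam2015dimers} that the existence of the $F_{\tau,T}$ forces the Pl\"ucker relations set-theoretically and then notes that $G_n$ has no nilpotents; your phrasing is slightly more direct and sidesteps the reducedness remark, but the two arguments are logically equivalent.
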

\begin{proof}
\def\tpi{{\tilde \pi}}
Let $\pi: \C[\Delta_I \mid I \in \binom{[2n]}{n-1}] \to G_n$ be the ring homomorphism given by the formula \cref{eq:delta-to-L}.  The ring $R(n-1,2n)$ is the quotient of $\C[\Delta_I]$ by the Pl\"ucker ideal generated by the Pl\"ucker relations.  We need to check that the Pl\"ucker ideal belongs to the kernel of $\pi$.

It is shown in Theorem 3.10 of \cite{lam2015dimers} that the existence of elements $F_{\tau,T}$ satisfying our \cref{thm:delta-to-F} implies the Pl\"ucker relations, at least set-theoretically.  Since the ring $G_n$ has no nilpotents, it follows that the entire Pl\"ucker ideal belongs to the kernel of $\pi$.
\end{proof}

\def\Proj{{\rm Proj}}
The ring homomorphism $\iota^*: R(n-1,2n) \to G_n$ induces a morphism between projective varieties $$\Proj(G_n) \to \Proj(R(n-1,2n)).$$  Indeed, this is exactly the embedding $\iota$ of electrical networks into the Grassmannian $\Gr(n-1,2n)$ constructed in \cite{lam2004electroid}, and our results give a new proof of this embedding.  Note that in \cite{lam2004electroid}, the formula \cref{eq:delta-to-L} is a theorem, whereas for us it is a definition.

The embedding is constructed in \cite{lam2004electroid} as follows.  Recall that $C_n$ denotes the $n$-th Catalan number.  Let $M = (M_{I\sigma})$ be the $\binom{2n}{n-1} \times C_n$ matrix of $1$-s and $0$-s, with $1$-s in the entries corresponding to pairs $(I,\sigma) \in \binom{[2n]}{n-1} \times \NP_n$ that are concordant.  Let $\X_n = H \cap \Gr(n-1,2n)$ be the linear slice of the Grassmannian, where $H \subset {\mathbb P}^{\binom{2n}{n-1}-1}$ is the projective subspace given by the image of the matrix $M$.  It is shown in \cite{lam2004electroid} that $\X_{n,\geq 0}:= \X_n \cap \Gr(n-1,2n)_{\geq 0}$ is isomorphic to the compactified space of electrical networks.  Since $\X_{n,\geq 0}$ is Zariski-dense in $\X$, we have the following result.

\begin{prop}\label{prop:grove}
The homogeneous coordinate ring of $\X$ is naturally isomorphic to the grove algebra $G_n$.  The ring homorphism $\pi:R(n-1,2n) \to G_n$ in \cref{thm:embed} induces the embedding $\iota: \X_n \hookrightarrow \Gr(n-1,2n)$ of \cite{lam2004electroid}.
\end{prop}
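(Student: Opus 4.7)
The plan is to prove the proposition by combining the explicit embedding $\iota$ constructed in \cite{lam2004electroid} with \cref{thm:embed}, and then applying Zariski density of $\X_{n,\geq 0}$ in $\X_n$. The proposition has two parts: the isomorphism $\C[\X_n] \cong G_n$, and the identification of the map $\pi$ with the pullback $\iota^*$.

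First I would recall from \cite{lam2004electroid} that $\iota$ is constructed so that the Pl\"ucker coordinate of $\iota(\Gamma)$ satisfies the concordance formula $\Delta_I(\iota(\Gamma)) = \sum_{\sigma \in \cE(I)} L_\sigma(\Gamma)$ for every cactus network $\Gamma$. Consequently, the pullback $\iota^*: R(n-1,2n) \to \C[\X_{n,\geq 0}]$ sends $\Delta_I$ to $\sum_{\sigma \in \cE(I)} L_\sigma$, which lies in $G_n$ viewed as a subring of $\C[\X_{n,\geq 0}]$. Hence $\iota^*$ factors through $G_n$, and on the Pl\"ucker generators it agrees with the map $\pi$ of \cref{thm:embed}. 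Since both are algebra homomorphisms determined by their values on the $\Delta_I$, they coincide, which yields the second assertion of the proposition. Moreover, because $\iota$ is a closed embedding, the pullback $R(n-1,2n) \twoheadrightarrow \C[\X_n]$ is surjective, and $\pi$ factors as $R(n-1,2n) \twoheadrightarrow \C[\X_n] \xrightarrow{\bar\pi} G_n$.

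It remains to show that $\bar\pi$ is an isomorphism. Injectivity is immediate: any $f \in \ker(\bar\pi) \subset \C[\X_n]$ is a regular function on $\X_n$ vanishing on the Zariski-dense subset $\X_{n,\geq 0}$, so $f = 0$. For surjectivity I would invoke the description from \cite{lam2004electroid} of $\X_n$ as the linear slice $H \cap \Gr(n-1,2n)$, where $H \subset {\mathbb P}^{\binom{2n}{n-1}-1}$ is the projective subspace realized as the image of the concordance matrix $M$, together with the fact that $M$ has full column rank $C_n$. The resulting identification of graded polynomial rings $\C[H] \cong \C[L_\sigma]$ sends the restriction $\Delta_I|_H$ to the linear form $\sum_\sigma M_{I\sigma} L_\sigma = \pi(\Delta_I)$. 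Thus the $\pi(\Delta_I)$ span $G_{n,1}$, and since $G_n$ is generated in degree one, $\bar\pi$ is surjective.

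The hard part is the surjectivity step, which rests on the structural description of $\X_n$ as a linear slice of the Grassmannian and the full-rank property of the concordance matrix, both drawn from \cite{lam2004electroid}. Once these are in hand, the proposition follows by chasing definitions through Zariski density of $\X_{n,\geq 0}$.
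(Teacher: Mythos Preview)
Your proposal is correct and follows essentially the same approach as the paper. The paper does not give a detailed proof of this proposition; it presents it as an immediate consequence of the preceding paragraph, which sets up $\X_n = H \cap \Gr(n-1,2n)$ via the concordance matrix $M$ and then notes that $\X_{n,\geq 0}$ is Zariski-dense in $\X_n$. Your argument is a careful unpacking of exactly these ingredients: Zariski density gives injectivity of $\bar\pi$, and the full-rank property of $M$ (equivalently, the identification $\C[H]\cong\C[L_\sigma]$) gives surjectivity, while the concordance formula identifies $\pi$ with $\iota^*$.
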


\subsection{Electrical networks and the Lagrangian Grassmannian}\label{sec:Lagrangian}
The subvariety $\X_n$ was further studied in the works of Chepuri, George, and Speyer \cite{chepuri2021electrical} and by Bychkov, Gorbounov, Kazakov, and Talalaev \cite{bychkov2021electrical}, who established the following result. 

\begin{theorem}\label{thm:LG}
The subvariety $\X_n$ is isomorphic to the Lagrangian Grassmannian $\LG(n-1,2n-2)$.
\end{theorem}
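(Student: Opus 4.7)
The plan is to prove the isomorphism by constructing a symplectic form on a $(2n-2)$-dimensional quotient so that the embedding $\iota\colon\X_n\hookrightarrow\Gr(n-1,2n)$ of \cref{prop:grove} factors through a copy of the Lagrangian Grassmannian $\LG(n-1,2n-2)$, and then matching dimensions to conclude equality.

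First I would analyze the concordance matrix $M$ and extract the linear relations among the Pl\"ucker coordinates $\Delta_I$ that cut out the projective span $H$. These relations should decompose into two families: degeneracy relations forcing $\iota(\X_n)$ into a distinguished $\Gr(n-1,2n-2)\subset\Gr(n-1,2n)$ (arising as a section by a fixed two-dimensional subspace), and symplectic relations of the form $\sum_{i,j}\omega_{ij}\,\Delta_{I'\cup\{i,j\}}=0$ that characterize the isotropy condition for a symplectic form $\omega$. The cyclic symmetry of the boundary labels, together with the $Y{-}\Delta$ invariance of grove coordinates, constrains $\omega$ up to scale.

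Once the containment $\iota(\X_n)\subset\LG(n-1,2n-2)$ is established, I would conclude by a dimension count and irreducibility. The space of critical reduced electrical networks with $n$ boundary vertices has $\binom{n}{2}$ independent edge weights, so $\dim\X_n=\binom{n}{2}$, matching $\dim\LG(n-1,2n-2)=\binom{n-1}{2}+(n-1)=\binom{n}{2}$. Since $\X_n$ is an irreducible closed subvariety of the irreducible variety $\LG(n-1,2n-2)$ of the same dimension, they must coincide.

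The main obstacle is identifying $\omega$ explicitly from the concordance data. One practical route is to compute small cases ($n=3,4$) to pin down the normalization and shape of $\omega$ directly from the electrical Pl\"ucker relations of \cref{prop:tfy_relations}, then bootstrap to general $n$ using the rotational $\Z/(2n)\Z$-symmetry and the $\Sp(2n-2)$-action of Lam--Pylyavskyy. Alternatively, following Chepuri--George--Speyer, one can construct mutually inverse birational maps between a standard parametrization of $\X_n$ (for instance via generic cactus networks of standard shape) and a big Schubert cell of $\LG(n-1,2n-2)$, then extend by taking closures.
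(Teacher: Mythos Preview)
The paper does not actually prove \cref{thm:LG}. It is stated as a result established elsewhere, with the attribution immediately preceding the theorem: ``The subvariety $\X_n$ was further studied in the works of Chepuri, George, and Speyer \cite{chepuri2021electrical} and by Bychkov, Gorbounov, Kazakov, and Talalaev \cite{bychkov2021electrical}, who established the following result.'' The theorem is then used as a black box to compute $\dim G_{n,d}$ in \cref{prop:dim_G_nd}. So there is no ``paper's own proof'' to compare your proposal against.

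That said, your outline is broadly aligned with how the cited works proceed: Bychkov--Gorbounov--Kazakov--Talalaev exhibit an explicit symplectic form and show the image lands in (and fills out) the corresponding Lagrangian Grassmannian, while Chepuri--George--Speyer work via an explicit isomorphism compatible with positivity. Your dimension count is correct: $\dim\LG(n-1,2n-2)=\binom{n}{2}$ matches the $\binom{n}{2}$ edge parameters of a top-dimensional critical network. However, what you have written is a plan, not a proof: you yourself flag that the main obstacle, identifying the symplectic form $\omega$ from the concordance data, is unresolved. The step ``these relations should decompose into two families'' is precisely where all the work lies, and neither computing small cases nor invoking cyclic symmetry pins down $\omega$ without the explicit construction carried out in \cite{bychkov2021electrical}. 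If you want a self-contained argument, you would need to either reproduce that construction or carry out the birational-map approach you mention at the end; the present sketch does not do either.
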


The appearance of the symplectic group in the study of electrical networks earlier appeared in \cite{Lam-Pylyavskyy}.  Combining \cref{thm:LG} and \cref{prop:grove}, we have the following result.

\begin{prop}\label{prop:dim_G_nd}
The dimension the graded piece $G_{n,d}$ of $G_{n}$ is $\dim G_{n,d}=\#\mathcal{C}_n^{(k)}$.
\end{prop}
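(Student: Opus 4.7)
The plan is to identify the grove algebra $G_n$ with the homogeneous coordinate ring of the Lagrangian Grassmannian $\LG(n{-}1, 2n{-}2)$ in its fundamental embedding, apply Borel--Weil to obtain a representation-theoretic description of each graded piece, and then combine the Weyl dimension formula with a classical enumeration of nested Dyck paths.

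First, I would combine \cref{prop:grove} with \cref{thm:LG} to identify $G_n$ with the homogeneous coordinate ring of $\LG(n{-}1, 2n{-}2)$ under the embedding inherited from the grove coordinates $L_\sigma$. Since $\mathrm{Pic}(\LG(n{-}1, 2n{-}2)) \cong \mathbb{Z}$ with ample generator $\mathcal{L}_{\omega_{n-1}}$, the polarization is determined by $\dim G_{n,1}$. The $C_n$ grove coordinates are linearly independent, as one sees by evaluating on sufficiently generic cactus networks realizing each noncrossing partition, so $\dim G_{n,1} = C_n = \dim V_{\omega_{n-1}}$ for $\Sp(2n{-}2)$. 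This forces the embedding to be the fundamental Pl\"ucker-type embedding of the Lagrangian Grassmannian. Because $\LG(n{-}1, 2n{-}2)$ is cominuscule and therefore projectively normal in this embedding, we obtain $G_{n,d} \cong H^0(\LG(n{-}1, 2n{-}2), \mathcal{L}_{\omega_{n-1}}^{\otimes d}) \cong V_{d\omega_{n-1}}^*$ by Borel--Weil.

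Next, applying the Weyl dimension formula for $\Sp(2m)$ at $\lambda = d\omega_m$, after observing that the short roots $e_i - e_j$ contribute a factor of $1$ (since $\lambda$ pairs trivially with them) while the remaining positive roots combine to give
\[
\dim V_{d\omega_m} = \prod_{1 \le i \le j \le m} \frac{i + j + 2d}{i + j},
\]
so with $m = n-1$ we get $\dim G_{n,d} = \prod_{1 \le i \le j \le n-1} \tfrac{i+j+2d}{i+j}$. It then remains to verify the combinatorial identity $\#\mathcal{C}_n^{(d)} = \prod_{1 \le i \le j \le n-1} \tfrac{i+j+2d}{i+j}$. Since $\mathcal{C}_n$ is the distributive lattice $J(\Phi^+)$ of order ideals in the type-$A_{n-1}$ positive-root staircase $\Phi^+ = \{(i,j) : 1 \le i \le j \le n{-}1\}$, the set $\mathcal{C}_n^{(d)}$ is in bijection with $P$-partitions of $\Phi^+$ with entries in $\{0, \dots, d\}$; the order polynomial of this staircase is classically known to equal the right-hand product, for instance by a Lindstr\"om--Gessel--Viennot determinant evaluation after shifting the nested Dyck paths into a family of non-intersecting lattice paths, or equivalently by Proctor's formula for cominuscule posets.

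The main technical obstacle is the identification, in the first step, of the grove-coordinate embedding of $\X_n$ with the fundamental Pl\"ucker embedding of $\LG(n{-}1, 2n{-}2)$. The degree-$1$ dimension count together with $\mathrm{Pic}(\LG) \cong \mathbb{Z}$ pins down the polarization, but carefully matching the polarization with the isomorphism from \cite{chepuri2021electrical,bychkov2021electrical} requires unwinding their constructions. Once this identification is in place, the Borel--Weil and Weyl dimension computations are standard, and the identification of $\#\mathcal{C}_n^{(d)}$ with the product formula is a classical combinatorial result.
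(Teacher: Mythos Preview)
Your proposal is correct and follows essentially the same route as the paper: identify $G_n$ with the homogeneous coordinate ring of $\LG(n-1,2n-2)$ via \cref{prop:grove} and \cref{thm:LG}, invoke Borel--Weil to get $G_{n,d}\cong V_{d\omega_{n-1}}$, compute $\dim V_{d\omega_{n-1}}$ by the Weyl dimension formula as $\prod_{1\le i\le j\le n-1}\frac{i+j+2d}{i+j}$, and match this with $\#\mathcal{C}_n^{(d)}$ via a standard lattice-path enumeration. The only difference is in how the polarization is pinned down: the paper simply cites \cite{bychkov2021electrical,chepuri2021electrical} for the identification of the relevant fundamental weight, whereas you argue intrinsically via $\mathrm{Pic}(\LG)\cong\mathbb{Z}$ together with the degree-one count $\dim G_{n,1}=C_n=\dim V_{\omega_{n-1}}$; your argument is slightly more self-contained and in fact resolves the ``technical obstacle'' you flag, so you need not unwind the cited constructions at all.
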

\begin{proof}
By Borel-Weil theory, the embedding $\X_n$ of $\LG(n-1,2n-2)$ into the Grassmannian $\Gr(n-1,2n)$, and thus the projective space ${\mathbb P}^{\binom{2n}{n-1}-1}$ corresponds to a choice of line bundle on $\LG(n-1,2n-2)$, or equivalently, to the choice of an irreducible representation $V_\omega$ of the symplectic group $\Sp(2n-2)$.  Indeed, it is shown in \cite{bychkov2021electrical} (see also \cite{chepuri2021electrical}) that $\omega = \omega_n$, where $n$ indexes the long simple root in the root system of type $C_n$.  Indeed, $V_\omega$ can be identified with the kernel of the natural map $\varphi: \bigwedge^{n-1} \C^{2n-2} \to \bigwedge^{n-3} \C^{2n-2}$; see Theorem 17.5 of \cite{Fulton-Harris}.

Each graded piece $G_{n,d}$ is itself an irreducible representation of $\Sp(2n-2)$.  Namely, we have $G_{n,d} \cong V_{d\omega}$.  The dimension of $V_{d\omega}$ is given by the Weyl character formula.  Explicitly, taking $\lambda = (d^{n-1})$ in Lemma 4.1 of \cite{Campbell-Stokke}, we get that this dimension is equal to
\begin{equation}\label{eq:WCF}
\frac{ \prod_{i=1}^{n-1} a_i \prod_{1\leq i<j \leq n-1} (a_i-a_j)(a_i+a_j)}{\prod_{i=1}^{n-1} (2i-1)!}
\end{equation}
where $(a_1,a_2,\ldots,a_{n-1}) = (d+n-1,d+n-2,\ldots,d+1)$.
We manipulate \cref{eq:WCF}:
\begin{align*}
&=\prod_{1 \leq i \leq n-1}(i+d) \prod_{1 \leq i < j \leq n-1}(i+j+2d) \left(\frac{\prod_{1 \leq i < j \leq n-1}(j-i)}{\prod_{i=1}^{n-1}(2i-1)!} \right) \\
&=\prod_{1 \leq i \leq n-1}(i+d) \prod_{1 \leq i < j \leq n-1}(i+j+2d) \frac{2^{n-1}}{\prod_{1\leq i \leq n-1}(i+i)(i+i+1) \cdots (i+n-1)}\\
&= \prod_{1\leq i \leq j \leq n-1} \frac{i+j+2d}{i+j}.
\end{align*}
The result follows from \cref{prop:prodformula}.

\end{proof}

The following result follows from standard techniques in the enumeration of noncrossing lattice paths; see \cite[Section 3.1.6]{ardila}.
\begin{prop}\label{prop:prodformula}
We have 
$$\#\mathcal{C}_n^{(k)} = \prod_{1 \leq i \leq j \leq n-1} \frac{i+j+2k}{i+j}.
$$
\end{prop}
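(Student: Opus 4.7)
The plan is to prove the formula by reducing to a determinant evaluation via the Lindström--Gessel--Viennot lemma, following the standard theory of non-intersecting lattice paths (as in the cited reference \cite{ardila}).

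First, I will encode each Dyck path $P \in \mathcal{C}_n$ by its height sequence $(h^P_0, h^P_1, \ldots, h^P_{2n})$, where $h^P_0 = h^P_{2n} = 0$, $h^P_i \geq 0$, and $h^P_i - h^P_{i-1} \in \{+1,-1\}$. The partial order on $\mathcal{C}_n$ then becomes the coordinatewise order $h^P_i \leq h^Q_i$. Next, given a chain $P_1 \leq P_2 \leq \cdots \leq P_k$, I apply the classical shift $P_j \mapsto \tilde P_j$ obtained by translating $P_j$ vertically by $2(j-1)$. Each $\tilde P_j$ is then a lattice path from $A_j := (0, 2j-2)$ to $B_j := (2n, 2j-2)$ with steps $(1,\pm 1)$, staying weakly above $y = 2j-2$. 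Since the shift gaps grow by $2$ per level while consecutive heights of the Dyck paths can differ by only $1$, the chain condition $P_j \leq P_{j+1}$ is equivalent to the vertex-disjointness of $\tilde P_j$ and $\tilde P_{j+1}$. This establishes a bijection between $\mathcal{C}_n^{(k)}$ and families of vertex-disjoint lattice paths from $\{A_j\}$ to $\{B_j\}$ staying above the $x$-axis.

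Then I will apply the Lindström--Gessel--Viennot lemma. Since the starting points and ending points lie in the same vertical order and any non-identity permutation yields crossing paths (hence no vertex-disjoint family), the signed sum collapses to $\det(M_{ij})_{1 \leq i,j \leq k}$, where $M_{ij}$ is the total count of individual lattice paths from $A_i$ to $B_j$ staying above the $x$-axis. Each $M_{ij}$ can be written explicitly as a difference of two binomial coefficients via the reflection principle, giving a determinant of polynomials in binomial coefficients in $n$ and $k$.

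The final step is to evaluate this determinant and verify that it equals $\prod_{1 \leq i \leq j \leq n-1} \frac{i+j+2k}{i+j}$. The main obstacle is this determinant evaluation, but it is a standard calculation: one can factor out common binomial factors from each row, reduce the remaining expression to a Vandermonde-type determinant, and then simplify. Alternatively, one recognizes the count as that of semistandard Young tableaux of rectangular shape $((n-1)^k)$ with entries bounded appropriately, whose enumeration by the hook-content formula (or, as noted in the proof of \cref{prop:dim_G_nd}, the Weyl dimension formula for $\Sp(2n-2)$) directly yields the stated product. Either route is routine once the LGV reduction is in place; the detailed computation is carried out in \cite[Section 3.1.6]{ardila}.
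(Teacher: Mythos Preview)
Your proposal is correct and follows exactly the approach the paper intends: the paper gives no proof of its own and simply cites the standard non-intersecting lattice path machinery in \cite[Section 3.1.6]{ardila}, which is precisely the LGV reduction you outline. Your shift-by-$2(j-1)$ bijection and the reflection-principle entries for $M_{ij}$ are the standard ingredients.

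One small caution about your alternative route: invoking the Weyl dimension formula for $\Sp(2n-2)$ here is circular in the paper's logic, since \cref{prop:dim_G_nd} uses exactly that computation and then appeals to \cref{prop:prodformula} to identify the result with $\#\mathcal{C}_n^{(k)}$. Also, the tableaux naturally in bijection with $k$-fans of Dyck paths are symplectic tableaux rather than ordinary semistandard tableaux, so the ordinary hook-content formula does not apply directly. Neither point affects your main LGV argument, which is sound.
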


Recall that in \cref{sec:catalan}, we defined a partial order $\leq$ and a total order $\preccurlyeq$ on Catalan objects. In this section, we use Dyck paths, and for notations, we write $L_P$ where $P\in\mathcal{C}_n$, instead of $L_{\sigma}$ where $\sigma\in\NP_n$, with the bijection between Dyck paths and noncrossing partitions shown in \cref{sec:catalan}. We label the nodes of each Dyck path by $0,1,\ldots,2n$ and typically ignore the $0$.

We define a total ordering on monomials in $G_n$ as follows. For monomials $L_{\mathbf{P}}=L_{P_1}\cdots L_{P_d}$ with $P_1\preccurlyeq\cdots\preccurlyeq P_d$ and $L_{\mathbf{Q}}=L_{Q_1}\cdots L_{Q_d}$ with $Q_1\preccurlyeq\cdots\preccurlyeq Q_d$, we define $L_{\mathbf{P}}\prec L_{\mathbf{Q}}$ if for some $1\leq k\leq d$, $P_i=Q_i$ for all $k+1\leq i\leq d$ and $P_k\prec Q_k$. This is a lexicographic order where we compare the larger sides first. Also, a monomial of lower degree always precedes a monomial of higher degree. For a polynomial $f$ in $L_{\sigma}$'s, its \emph{leading term} is the monomial with nonzero coefficient that is smallest in the total order $\preccurlyeq$.

We say that a monomial $L_{\mathbf{P}}=L_{P_1}\cdots L_{P_d}$ is \emph{standard} if $P_1\leq P_2\leq\cdots\leq P_d$, where we use the partial order here. Thus, standard monomials of degree $d$ are indexed by $\mathcal{C}_n^{(d)}$. We will soon see that standard monomials form a basis of $G_n$.

\begin{defin}\label{def:catalan-subset}
For a Dyck path $P$, its corresponding \emph{Catalan subset} $I(P)$ is an $(n-1)$-element subset of $[2n]$ such that $\{1\}\cup\{a+1\:|\: a\in I(P)\}$ is the positions of the upsteps in $P$. A subset $I\subset{2n\choose n-1}$ is called a \emph{Catalan subset} if $I$ equals the Catalan subset of a Dyck path $P$. 
\end{defin}
For a Catalan subset $I$, let $P(I)$ be the corresponding Dyck path. It is also straightforward to see that $P\leq Q$ if and only if the $k$-th largest element in $I(P)$ is greater than or equal to the $k$-th largest element in $I(Q)$, for all $k=1,\ldots,n-1$. The following lemma is immediate from the definition of Dyck paths.
\begin{lemma}\label{lem:catalan-subset}
The subset $I=\{a_1<\cdots<a_{n-1}\}$ is a Catalan subset if and only if $a_i\leq 2i$ for all $1\leq i\leq n-1$.
\end{lemma}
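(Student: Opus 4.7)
The plan is to translate the Dyck path condition directly into a constraint on the Catalan subset by reading off the positions of the upsteps.

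First, let $P \in \mathcal{C}_n$ be a Dyck path of semilength $n$ and write its upsteps in order as positions $1 = p_1 < p_2 < \cdots < p_n$ in $[2n]$. (The first step of any Dyck path is necessarily an upstep, so $p_1 = 1$.) By \cref{def:catalan-subset}, the Catalan subset of $P$ is $I(P) = \{p_2 - 1, p_3 - 1, \ldots, p_n - 1\}$, so writing $I(P) = \{a_1 < \cdots < a_{n-1}\}$ we have $a_i = p_{i+1} - 1$.

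Second, I would recall that $P$ being a Dyck path is equivalent to the condition that for every $i \in [1,n]$, the $i$-th upstep occurs at position at most $2i-1$, i.e.\ $p_i \leq 2i - 1$. Indeed, if $p_i \geq 2i$ then at position $p_i - 1$ there are only $i - 1$ upsteps but at least $i$ downsteps, so $P$ dips below the $x$-axis; conversely, if $p_i \leq 2i - 1$ for all $i$, then at any position $k$ the number of upsteps among the first $k$ steps is at least $\lceil k/2 \rceil$, which forces $P$ to stay weakly above the $x$-axis.

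Third, I would substitute $p_{i+1} = a_i + 1$ into $p_{i+1} \leq 2(i+1) - 1$, obtaining $a_i \leq 2i$ for $i = 1, \ldots, n-1$ (the constraint $p_1 \leq 1$ is automatic). This shows that any Catalan subset satisfies $a_i \leq 2i$. Conversely, given any $I = \{a_1 < \cdots < a_{n-1}\} \subset [2n]$ with $a_i \leq 2i$, set $p_1 = 1$ and $p_{i+1} = a_i + 1$; these are $n$ distinct elements of $[2n]$ with $p_i \leq 2i - 1$, so the walk whose upsteps are exactly at these positions is a Dyck path whose Catalan subset is $I$.

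No real obstacle arises: the lemma is a direct rewriting, and the only thing to be careful about is that $a_{n-1} \leq 2n-2$ (so that the last step of the reconstructed path is a downstep), which is automatic from $a_{n-1} \leq 2(n-1)$.
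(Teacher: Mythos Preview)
Your proof is correct and is precisely the natural unpacking of the statement; the paper itself does not give a proof, merely asserting that the lemma ``is immediate from the definition of Dyck paths.'' Your argument is exactly the verification one would supply for that assertion.
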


\begin{remark}
We see that $P\preccurlyeq Q$ if and only if $I(P)$ is lexicographically larger than $I(Q)$. So for the remainder of the section, we will be extra careful regarding whether a path is smaller than another path, or a subset is smaller than another subset.
\end{remark}

Observe that $I(P)$ is concordant with the noncrossing partition $\sigma(P)$. See \cref{fig:upstep_concordant} for an example of $I=\{1,3,4,5,10\}$. 
\begin{figure}[h!]
    \centering
     \begin{tikzpicture}[scale=0.7]
    \def\x{0.7}
     \draw(0,0)--(2,2)--(3,1)--(6,4)--(10,0)--(11,1)--(12,0);
    \node at (0,0) {$\bullet$};
    \node[circle, draw=black, fill=white, minimum size=1, scale=0.4,] at (1,1) {};
    \node[draw, regular polygon,regular polygon sides=4,scale=0.8] at (1,1) {};
    \node[circle, draw=black, fill=black, minimum size=1, scale=0.4,] at (2,2) {};
    \node[circle, draw=black, fill=white, minimum size=1, scale=0.4,] at (3,1) {};
    \node[draw, regular polygon,regular polygon sides=4,scale=0.8] at (3,1) {};
    \node[circle, draw=black, fill=black, minimum size=1, scale=0.4,] at (4,2) {};
    \node[draw, regular polygon,regular polygon sides=4,scale=0.8] at (4,2) {};
    \node[circle, draw=black, fill=white, minimum size=1, scale=0.4,] at (5,3) {};
    \node[draw, regular polygon,regular polygon sides=4,scale=0.8] at (5,3) {};
    \node[circle, draw=black, fill=black, minimum size=1, scale=0.4,] at (6,4) {};
    \node[circle, draw=black, fill=white, minimum size=1, scale=0.4,] at (7,3) {};
    \node[circle, draw=black, fill=black, minimum size=1, scale=0.4,] at (8,2) {};
    \node[circle, draw=black, fill=white, minimum size=1, scale=0.4,] at (9,1) {};
    \node[circle, draw=black, fill=black, minimum size=1, scale=0.4,] at (10,0) {};
    \node[draw, regular polygon,regular polygon sides=4,scale=0.8] at (10,0) {};
    \node[circle, draw=black, fill=white, minimum size=1, scale=0.4,] at (11,1) {};
    \node[circle, draw=black, fill=black, minimum size=1, scale=0.4,] at (12,0) {};
   
    \node at (1,1+\x) {1};
    \node at (2,2+\x) {2};
    \node at (3,1+\x) {3};
    \node at (4,2+\x) {4};
    \node at (5,3+\x) {5};
    \node at (6,4+\x) {6};
    \node at (7,3+\x) {7};
    \node at (8,2+\x) {8};
    \node at (9,1+\x) {9};
    \node at (10,0+\x) {10};
    \node at (11,1+\x) {11};
    \node at (12,0+\x) {12};
    
    \draw[dotted] (0,0) -- (12,0);
    \draw[dotted] (1,1) -- (9,1);
    \draw[dotted] (4,2) -- (8,2);
    \draw[dotted] (5,3) -- (7,3);
    \end{tikzpicture}
    \caption{Concordant subset obtained from taking the upsteps.}
    \label{fig:upstep_concordant}
\end{figure}
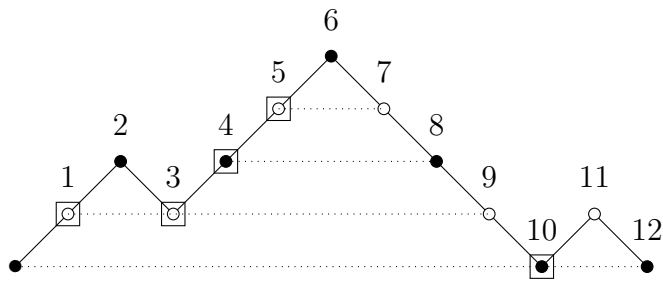

\begin{lemma}\label{lem:P-minimal-in-delta}
For a Catalan subset $I$, $P(I)$ is the minimum in the total order $\preccurlyeq$ on $\cE(I)$. 
\end{lemma}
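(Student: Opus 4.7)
The plan has two parts: verify that $\sigma(P(I))\in\cE(I)$, so $P(I)$ is a candidate for the minimum, and then show $P(I)\preccurlyeq\tau^{-1}(\sigma)$ for every $\sigma\in\cE(I)$.

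For the first part I proceed by induction on $n$. If $1\notin I$, the Catalan condition $a_1\leq 2$ forces $a_1=2$, so $P(I)$ begins with $UD$ and the remainder, viewed on positions $[3,2n]$ and reindexed, is $P(I')$ for the Catalan subset $I'=\{a_i-2:2\leq i\leq n-1\}\subset[2n-2]$. The noncrossing matching of $P(I)$ is then $\{(1,2)\}\cup\tau(\sigma(P(I')))$, so $\sigma(P(I))=\{\bar 1\}\sqcup\sigma(P(I'))$: the singleton $\{\bar 1\}$ contributes the unique odd white $1\notin I$ to itself, and concordance of the remaining parts follows from $\sigma(P(I'))\in\cE(I')$ once one checks that the reindexing $[3,2n]\leftrightarrow[1,2n-2]$ preserves parities and $I'$-membership. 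The dual statement for $\tilde\sigma(P(I))$ is entirely parallel. The case $1\in I$ is handled analogously at the appropriate initial up-run in $P(I)$.

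For the second part, suppose for contradiction that $\sigma\in\cE(I)$ with $Q:=\tau^{-1}(\sigma)\prec P(I)$, and let $j$ be the smallest position where $v(Q)_j\neq v(P(I))_j$. Then $v(P(I))_j=1$ and $v(Q)_j=0$, so $j\geq 2$, $j-1\in I$, and the prefixes of $Q$ and $P(I)$ on $[1,j-1]$ coincide. Since the noncrossing matching of a Dyck path is the balanced-parentheses matching, the downstep at $j$ in $Q$ is matched with the upstep $i<j$ equal to the top of the stack after processing $[1,j-1]$, producing an arc $(i,j)\in\tau(\sigma(Q))$. Write $j=2\beta$ if $j$ is even and $j=2\beta-1$ if $j$ is odd; then the half-integer position $j-0.5$ is occupied by $\bar\beta$ or $\tilde{\beta-1}$, respectively, whose integer position $j-1$ lies in $I$, i.e.\ is ``black.''

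The key claim is that the part of $\sigma(Q)$ (resp.\ $\tilde\sigma(Q)$) containing $\bar\beta$ (resp.\ $\tilde{\beta-1}$) is a singleton. Examining position $j-1$ in $Q$: if $j=2$ then $i=1$ and the arc $(1,2)$ immediately isolates $\bar 1$; if $j\geq 3$ and $j-2\in I$, then $j-1$ is an upstep of $Q$ still on top of the stack, forcing $i=j-1$ and isolating the element at $j-0.5$ inside $(j-1,j)$; if $j-2\notin I$, then $j-1$ is a downstep of $Q$ matched to some $i'''<j-1$, and the nested arc $(i''',j-1)\subset(i,j)$ together with the absence of integers in the open interval $(j-1,j)$ leaves only the half-integer $j-0.5$ in the region bounded by $(i''',j-1)$, $(i,j)$, and the disk boundary from $j-1$ to $j$. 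In every case the resulting part is a singleton consisting of a black element, contradicting concordance of $\sigma$ with $I$.

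The main obstacle is organizing the parity case analysis cleanly (since the argument toggles between $\sigma$ and $\tilde\sigma$ depending on the parity of $j$), and carefully verifying that the relevant ``isolation region'' contains no other $\bar\cdot$ or $\tilde\cdot$ --- which it cannot, again by integrality.
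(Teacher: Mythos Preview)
Your Part 2 contains a genuine error in the third case of the case analysis. The claim that the part of $\sigma(Q)$ (or $\tilde\sigma(Q)$) containing the element at position $j-\tfrac12$ is a \emph{singleton} is false in general. In that case you bound the region by the arcs $(i''',j-1)$ and $(i,j)$ and the boundary segment from $j-1$ to $j$, but you have forgotten the other boundary segment from $i$ to $i'''$. The region inside $(i,j)$ and outside $(i''',j-1)$ also contains all half-integers in the interval $(i,i''')$, and these can be further $\bar{\cdot}$ (or $\tilde{\cdot}$) points in the same part.

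Here is an explicit counterexample. Take $n=4$ and $I=\{1,3,5\}$, so $P(I)=UUDUDUDD$. Let $Q=UUDUDDUD$, with noncrossing matching $\{(1,6),(2,3),(4,5),(7,8)\}$. The first disagreement is at $j=6$, with $j-1=5\in I$ and $j-2=4\notin I$, so we are in your third case. Here $i=1$, $i'''=4$, and the region inside $(1,6)$ but outside $(2,3)$ and $(4,5)$ contains $\bar 1,\bar 2,\bar 3$; the part of $\sigma(Q)$ containing $\bar 3$ is $\{\bar 1,\bar 2,\bar 3\}$, not a singleton.

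What \emph{is} true, and what the paper proves, is the weaker statement that this entire part lies in $I$; that already contradicts concordance. The paper argues via the Dyck path picture: node $a=j-1$ is the rightmost element of its horizontal visibility class $Z$ (since step $j$ is a downstep), and every other $b\in Z$ is followed by an upstep (else $b$ would itself be rightmost), hence $b\in I$ because $Q$ agrees with $P(I)$ on the first $j-1$ steps. Together with $a\in I$ this gives $Z\subset I$. Your cases 1 and 2 are special instances where $Z$ happens to be a singleton, but in case 3 you must abandon the singleton claim and instead show $Z\subset I$ directly.
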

\begin{proof}
Fix a Catalan subset $I$ and its Dyck path $P=P(I)$, and consider another Dyck path $Q\preccurlyeq P$ which differs from $P$ at step $a+1$ where $Q$ goes down and $P$ goes up and agrees with $P$. This means $a\in I$ by construction. Consider the node right before step $a+1$, which is labeled by $a$ in $Q$ and look horizontally to the left. Since $Q$ goes down at step $a+1$, we have that $a$ is the maximum element in this connected component $Z$. Since $Q$ agrees with $P$ before step $a$, every node in $Z$ is followed by an upstep, meaning that $(Z\setminus\{a\})\subset I$. As $a\in I$, $Z\subset I$, and $I$ cannot be concordant with $Q$.  
\end{proof}

\begin{defin}\label{def:noncomparable_relation}
For a pair of noncomparable (under $\leq$) Dyck path $P\succcurlyeq Q$, let $I=I(P)=\{a_1<\cdots<a_{n-1}\}$ and $J=I(Q)=\{b_1<\cdots<b_{n-1}\}$ so that $I(P)$ is lexicographically smaller than $I(Q)$. As $P\succcurlyeq Q$ and $P,Q$ are noncomparable in $\leq$, let $k$ be the smallest index such that $a_k>b_k$. Then define the Pl\"ucker relation of the form
\[r_{P,Q}=\Delta_I\Delta_J-\sum_{I',J'}(-1)^{a(I',J')}\Delta_{I'}\Delta_{J'}\]
where the sum is over $I',J'$ obtained from $I,J$ by swapping the last $n-k$ entries $\{a_{n-k},\ldots,a_{n-1}\}$ of $I$ with all possible choices of $(n-k)$-subsets of $J$, with $\Delta_I=\sum_{K\in\cE(I)}L_{K}$ and $a(I',J')$ is the total number of swaps needed to put both $I'$ and $J'$ in order, defined above.
\end{defin}
Let $R=\{r_{P,Q}\:|\: P\succcurlyeq Q\text{ noncomparable}\}$ be the set of all such relations.

\begin{ex}\label{ex:noncomparable_relation}
Let $P$ be the blue Dyck path and $Q$ be the black Dyck path in \cref{fig:noncomparable_relation} with $I=I(P)=\{1,2,5\}$ and $J=I(Q)=\{1,3,4\}$. Then the first index $k$ such that $a_k > b_k$ is $k=3$ with $a_k=5>b_k=4$. We have the relation
\[r_{P,Q}=\sum_{\substack{\sigma\in \cE(\{1,2,5\})\\ \kappa\in \cE(\{1,3,4\})}}L_\sigma L_\kappa + \sum_{\substack{\sigma\in \cE(\{1,2,3\})\\ \kappa\in \cE(\{1,4,5\})}}L_\sigma L_\kappa -\sum_{\substack{\sigma\in \cE(\{1,2,4\})\\ \kappa\in \cE(\{1,3,5\})}}L_\sigma L_\kappa.\]
\begin{figure}[h!]
\centering
 \begin{tikzpicture}[scale=0.7]
    \def\x{-0.5}
    \def\y{0.5}
    \def\z{1}
     \draw(0,0)--(2,2)--(3,1)--(5,3)--(8,0);
    \node at (0,0) {$\bullet$};
    \node[circle, draw=black, fill=white, minimum size=1, scale=0.4,] at (1,1) {};
    \node[draw, regular polygon,regular polygon sides=4,scale=0.8] at (1,1) {};
    \node[circle, draw=black, fill=black, minimum size=1, scale=0.4,] at (2,2) {};
    \node[circle, draw=black, fill=white, minimum size=1, scale=0.4,] at (3,1) {};
    \node[draw, regular polygon,regular polygon sides=4,scale=0.8] at (3,1) {};
    \node[circle, draw=black, fill=black, minimum size=1, scale=0.4,] at (4,2) {};
    \node[draw, regular polygon,regular polygon sides=4,scale=0.8] at (4,2) {};
    \node[circle, draw=black, fill=white, minimum size=1, scale=0.4,] at (5,3) {};
    \node[circle, draw=black, fill=black, minimum size=1, scale=0.4,] at (6,2) {};
    \node[circle, draw=black, fill=white, minimum size=1, scale=0.4,] at (7,1) {};
    \node[circle, draw=black, fill=black, minimum size=1, scale=0.4,] at (8,0) {};
   
    \node at (1,1+\x) {1};
    \node at (2,2+\x) {2};
    \node at (3,1+0.5) {3};
    \node at (4,2+\x) {4};
    \node at (5,3+0.5) {5};
    \node at (6,2+\x) {6};
    \node at (7,1+\x) {7};
    \node at (8,0+\x) {8};
    
    \draw[blue] (0,0+\y) -- (3,3+\y) -- (5,1+\y) -- (6,2+\y) -- (8,0+\y);
    \node[circle, draw=blue, fill=blue, minimum size=1, scale=0.4] at (0,0+\y) {};
    \node[circle, draw=blue, fill=white, minimum size=1, scale=0.4,] at (1,1+\y) {};
    \node[draw = blue, regular polygon,regular polygon sides=4,scale=0.8] at (1,1+\y) {};
    \node[circle, draw=blue, fill=blue, minimum size=1, scale=0.4,] at (2,2+\y) {};
    \node[draw=blue, regular polygon,regular polygon sides=4,scale=0.8] at (2,2+\y) {};
    \node[circle, draw=blue, fill=white, minimum size=1, scale=0.4,] at (3,3+\y) {};
    \node[circle, draw=blue, fill=blue, minimum size=1, scale=0.4,] at (4,2+\y) {};
    \node[draw, regular polygon,regular polygon sides=4,scale=0.8] at (4,2) {};
    \node[circle, draw=blue, fill=white, minimum size=1, scale=0.4,] at (5,1+\y) {};
    \node[draw=blue, regular polygon,regular polygon sides=4,scale=0.8] at (5,1+\y) {};
    \node[circle, draw=blue, fill=blue, minimum size=1, scale=0.4,] at (6,2+\y) {};
    \node[circle, draw=blue, fill=white, minimum size=1, scale=0.4,] at (7,1+\y) {};
    \node[circle, draw=blue, fill=blue, minimum size=1, scale=0.4,] at (8,0+\y) {};

    \node[blue] at (1,1+\z) {1};
    \node[blue] at (2,2+\z) {2};
    \node[blue] at (3,3+\z) {3};
    \node[blue] at (4,2+\z) {4};
    \node[blue] at (5,1+\z) {5};
    \node[blue] at (6,2+\z) {6};
    \node[blue] at (7,1+\z) {7};
    \node[blue] at (8,0+\z) {8};
\end{tikzpicture}
\caption{A pair of noncomparable Dyck paths}
\label{fig:noncomparable_relation}
\end{figure}
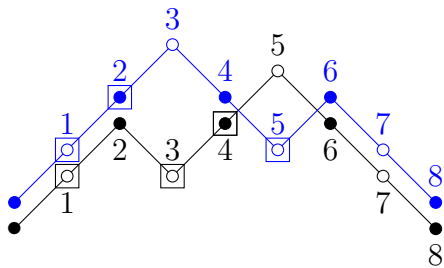
\end{ex}

The following key lemma is our ``straightening algorithm".
\begin{lemma}\label{lem:straightening}
For $P \succcurlyeq Q$, the leading term of $r_{P,Q}$ is $L_QL_P$. 
\end{lemma}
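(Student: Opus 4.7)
The plan is to identify $L_QL_P$ as the $\preccurlyeq$-smallest monomial of $r_{P,Q}$. Expanding $r_{P,Q}=\Delta_I\Delta_J-\sum_{I',J'}(-1)^{a(I',J')}\Delta_{I'}\Delta_{J'}$ via $\Delta_K=\sum_{R\in\cE(K)}L_R$ (identifying $\cE(K)$ with Dyck paths via the bijection with noncrossing partitions), this reduces to two claims: (a) the smallest monomial of $\Delta_I\Delta_J$ is $L_QL_P$, with positive coefficient; and (b) for every swap $(I',J')$ in the sum with both $\Delta_{I'},\Delta_{J'}$ nonzero, every monomial of $\Delta_{I'}\Delta_{J'}$ is strictly $\succ L_QL_P$. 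These two facts together yield the claim.

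For (a), by \cref{lem:P-minimal-in-delta}, $P=P(I)$ is the $\preccurlyeq$-minimum of $\cE(I)$ and $Q=P(J)$ is the $\preccurlyeq$-minimum of $\cE(J)$. For any $(R_1,R_2)\in\cE(I)\times\cE(J)$, write $L_{R_1}L_{R_2}=L_AL_B$ with $A\preccurlyeq B$. Combining $R_1\succcurlyeq P$, $R_2\succcurlyeq Q$, and $P\succcurlyeq Q$ forces $B\succcurlyeq P$; equality requires $R_1=P$ and $R_2\preccurlyeq P$, whence $A=R_2\succcurlyeq Q$. So $L_QL_P$ is the $\preccurlyeq$-smallest monomial, attained by $(R_1,R_2)=(P,Q)$ with coefficient at least $1$.

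For (b), the same analysis applied to $(I',J')$ shows that the smallest monomial of $\Delta_{I'}\Delta_{J'}$ has its larger entry equal to $\max_{\preccurlyeq}(P(I'),P(J'))$. Since the bijection between Dyck paths and Catalan subsets reverses the orders $\preccurlyeq$ and $\leq_{\mathrm{lex}}$, it suffices to prove $I'<_{\mathrm{lex}}I$ strictly for each Catalan $I'$ arising from a swap (then $\min_{\mathrm{lex}}(I',J')\leq I'<_{\mathrm{lex}}I$, so $\max_{\preccurlyeq}(P(I'),P(J'))\succ P$). A first observation: $T=S$ cannot occur, since $|S|=n-k$ and $S\subset(b_k,\infty)$ while $|J\cap(b_k,\infty)|=n-1-k<n-k$. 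Writing $I'=\{c_1<\cdots<c_{n-1}\}$, the rank estimates $|I'\cap[1,a_i]|\geq i$ for $i\leq k-1$ (via $\{a_1,\ldots,a_i\}\subset I'$) and $|I'\cap[1,a_k]|\geq k$ (by pigeonhole on $T\subset J$, using $|T|>|J\cap(a_k,\infty)|$) yield $c_i\leq a_i$ for all $i\leq k$.

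The technical heart of the argument is upgrading $c_k\leq a_k$ to strict inequality. If $c_k=a_k$, tightness of the $k$-th rank count forces $a_k\in T\subset J$ and $T\cap[1,a_k]=\{a_k\}$. Writing $a_k=b_{j_0}$, one has $j_0>k$ (since $a_k>b_k$), so the remaining $n-k-1$ elements of $T$ lie in $\{b_{j_0+1},\ldots,b_{n-1}\}$, giving $n-k-1\leq n-1-j_0$, i.e., $j_0\leq k$---contradicting $j_0>k$. Hence $c_k<a_k$ strictly, so $I'<_{\mathrm{lex}}I$. Combining (a) and (b), $L_QL_P$ appears in $r_{P,Q}$ with the positive coefficient it inherits from $\Delta_I\Delta_J$, and no monomial $\prec L_QL_P$ appears elsewhere, so $L_QL_P$ is the leading term of $r_{P,Q}$. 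The main obstacle is this strict inequality $c_k<a_k$, which requires the double-counting contradiction above.
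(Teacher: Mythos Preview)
Your overall strategy matches the paper's, and your arguments for (a) and for the strict inequality $c_k<a_k$ are correct. But there is a real gap in (b): you invoke $P(I')$ and apply \cref{lem:P-minimal-in-delta} to $I'$ without verifying that $I'$ is a Catalan subset. Your rank estimates only establish $c_i\le a_i$ for $i\le k$; you never show $c_i\le 2i$ for $i>k$, so $P(I')$ is not yet defined and the lemma does not apply. The paper closes this by noting that for $i\ge k$, since $J$ is Catalan we have $|J\cap(2i,\infty)|\le n-1-i$, hence $|T\cap[1,2i]|\ge(n-k)-(n-1-i)=i-k+1$; together with $a_1,\ldots,a_{k-1}\le 2i$ this gives $|I'\cap[1,2i]|\ge i$, so $I'$ is Catalan by \cref{lem:catalan-subset}. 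Relatedly, your appeal to $P(J')$ is both unnecessary and unjustified: $J'$ need not be Catalan (take $n=3$, $I=\{1,4\}$, $J=\{2,3\}$, $k=2$, $T=\{2\}$, giving $J'=\{3,4\}$). Fortunately the argument only requires that every $R_1\in\cE(I')$ satisfy $R_1\succcurlyeq P(I')\succ P$, whence $\max_{\preccurlyeq}(R_1,R_2)\succ P$ regardless of $R_2\in\cE(J')$.

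Two smaller remarks. The observation that $T=S$ is impossible is correct but not needed once you have $c_k<a_k$ strictly. And your contradiction argument for $c_k<a_k$, while valid, is more involved than necessary: the paper simply notes that any $(n-k)$-subset $T=\{b_{m_1}<\cdots<b_{m_{n-k}}\}$ of the $(n-1)$-element set $J$ has $m_1\le k$, so $\min T\le b_k<a_k$; since $\min T$ is distinct from $a_1,\ldots,a_{k-1}$ (else $|I'|<n-1$ and $\Delta_{I'}=0$), $I'$ contains $k$ distinct elements strictly below $a_k$, giving $c_k<a_k$ immediately.
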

\begin{proof}
Recall that the leading term is the smallest (in terms of Dyck paths) monomial, compared from large to small. By \cref{lem:P-minimal-in-delta}, the leading term of $\Delta_I$ is $L_{P(I)}$, if $I$ is a Catalan subset. And for a pair of noncomparable Dyck paths $P \succcurlyeq Q$ with $I=I(P)$ and $J=I(Q)$, the leading term of $\Delta_I\Delta_J$ is $L_QL_P$.

Keep the notations as in this section. Let's examine the set \[I'=\{a_1,\ldots,a_{k-1},b_{m_1},\ldots,b_{m_{n-k}}\}\] obtained from $I$ in the definition of $r_{P,Q}$ (\cref{def:noncomparable_relation}), where $m_1<\cdots<m_{n-k}$. Without loss of generality, assume all elements in $I'$ are different, as we have $\Delta_{I'}=0$ if otherwise. For each $i=1,2,\ldots,n-1$, if $i\leq k-1$, we have $a_1,\ldots,a_{i}\leq 2i$; and if $i\geq k$, we have $a_1,\ldots,a_{k-1}\leq 2i$, and at least $i-k+1$ number of elements from $\{b_{m_1},\ldots,b_{m_{n-k}}\}$ that are less than or equal to $2i$ since $J=\{b_1<\cdots<b_{n-1}\}$ is a Catalan subset. This means that for each $i$, there are at least $i$ elements from $I'$ that do not exceed $2i$, and by \cref{lem:catalan-subset}, $I'$ is a Catalan subset. Moreover, by definition of $k$, the smallest number among $\{b_{m_1},\ldots,b_{m_{n-k}}\}$ is at most $b_k<a_k$. Thus, the $i$-th smallest number of $I'$ is smaller than or equal to the $i$-th smallest number of $I$ for $i\leq k$; and the $k$-th smallest number of $I'$ is strictly smaller than the $k$-th smallest number of $I$. This is saying $P(I')\succcurlyeq P(I)=P$ and $P(I')\neq P$. 

For each term $L_{P'}L_{Q'}$ in $\Delta_{I'}\Delta_{J'}$ in the relation $r_{P,Q}$, by \cref{lem:P-minimal-in-delta} and the argument above, $P'\succcurlyeq P(I')\succcurlyeq P\succcurlyeq Q$ and $P'\neq P$. So the monomial $L_{P'}L_{Q'}$ (no matter which one of $P',Q'$ is larger) is strictly larger than $L_PL_Q$ in our term order. As a result, $L_PL_Q$ is the leading term of $r_{P,Q}$ as desired, with coefficient $1$ from $\Delta_I\Delta_J$ that cannot be canceled. 
\end{proof}

\begin{theorem}\label{thm:tableaux-basis}
The set of relations $R$ is a Gr\"obner basis of the ideal $\II_n$. The graded piece $G_{n,d}$ of the Grove algebra has a linear basis given by $\{L_{\mathbf{P}}\:|\: \mathbf{P}\in\mathcal{C}_n^{(d)}\}$.
\end{theorem}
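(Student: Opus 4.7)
The strategy is the standard Gr\"obner basis / straightening argument, powered by \cref{lem:straightening} and the independent dimension computation of \cref{prop:dim_G_nd}.

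First I would check one containment: each $r_{P,Q}$ is, by construction, a specialization of the electrical Pl\"ucker relation in \cref{prop:tfy_relations}, so $R \subseteq \II_n$ and hence $\langle R \rangle \subseteq \II_n$. In particular there is a surjection $\C[L_\sigma]/\langle R \rangle \twoheadrightarrow G_n$.

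Next I would run a straightening rewrite to bound the quotient from above. A monomial $L_{P_1} \cdots L_{P_d}$ is standard iff the $P_i$ are pairwise comparable under $\leq$; otherwise some pair $P_i \preccurlyeq P_j$ is incomparable in $\leq$, and the monomial is divisible by the leading term $L_{P_i} L_{P_j}$ of $r_{P_j,P_i} \in R$ (using \cref{lem:straightening}). Modulo $\langle R \rangle$ I may replace this non-standard factor by a $\Z$-linear combination of products $L_{\sigma} L_{\sigma'}$ whose corresponding monomials are \emph{strictly larger} in the term order $\prec$. Since there are only finitely many monomials of any fixed degree $d$, this rewriting terminates, proving that the standard monomials span $\C[L_\sigma]_d/\langle R \rangle_d$ and hence
\[
\dim_\C\bigl(\C[L_\sigma]_d/\langle R \rangle_d\bigr) \leq \#\mathcal{C}_n^{(d)}.
\]

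Finally I would invoke the dimension formula: \cref{prop:dim_G_nd} gives $\dim_\C G_{n,d} = \#\mathcal{C}_n^{(d)}$. Combined with $\langle R \rangle \subseteq \II_n$, both inequalities must be equalities. This forces three conclusions simultaneously: $\langle R \rangle = \II_n$; the standard monomials $\{L_{\mathbf P} : \mathbf P \in \mathcal{C}_n^{(d)}\}$ are linearly independent and therefore a basis of $G_{n,d}$; and the monomial initial ideal $\mathrm{in}_\prec(\II_n)$ is spanned by the non-standard monomials, i.e.\ generated by the leading monomials $\{L_Q L_P : P \succcurlyeq Q \text{ incomparable}\}$ of the elements of $R$. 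The last statement is precisely that $R$ is a Gr\"obner basis of $\II_n$ with respect to $\prec$.

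The only delicate point is the termination of the straightening rewrite, which is guaranteed by the strict increase in $\prec$ from \cref{lem:straightening} together with finiteness of monomials in each degree. Note that we never have to check any $S$-polynomial criterion by hand: the exact dimension input from \cref{prop:dim_G_nd} does that work for us, which is what makes the argument clean.
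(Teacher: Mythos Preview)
Your proposal is correct and follows essentially the same route as the paper: both use \cref{lem:straightening} to straighten non-standard monomials into $\prec$-larger ones (terminating by finiteness in each degree), invoke the dimension count \cref{prop:dim_G_nd} to upgrade spanning to a basis and to force $\langle R\rangle=\II_n$, and then deduce the Gr\"obner basis statement from the fact that the standard monomials are exactly the monomials outside $\mathrm{in}_\prec(\II_n)$. The paper phrases the last step as a short contradiction argument, while you phrase it via Macaulay's description of the quotient, but the content is identical.
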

\begin{proof}
We have $R\subset\II_n$ by construction. To show that $\{L_{\mathbf{P}}\:|\: \mathbf{P}\in\mathcal{C}_n^{(d)}\}$ spans $G_{n,d}$, we use induction on the total order on the monomials. If a monomial $L_{\mathbf{Q}}$ is not standard, where $\mathbf{Q}=(Q_1\preccurlyeq\cdots \preccurlyeq Q_d)$, then there exists some $k$ such that $Q_k\nleq Q_{k+1}$. We now have a noncomparable pair of Dyck paths. Subtracting $L_{\mathbf{Q}}$ by the relation $r_{Q_{k+1},Q_k}\cdot L_{Q_1}\cdots L_{Q_{k-1}}L_{Q_{k+2}}\cdots L_{Q_d}$, we obtain a polynomial whose leading term is greater than $L_{\mathbf{Q}}$. By the induction hypothesis, this difference can be written as a linear combination of $\{L_{\mathbf{P}}\:|\: \mathbf{P}\in\mathcal{C}_n^{(d)}\}$, so $L_{\mathbf{Q}}$ can also be written as a linear combination of $\{L_{\mathbf{P}}\:|\: \mathbf{P}\in\mathcal{C}_n^{(d)}\}$ in $G_n$. Thus, standard monomials span. By \cref{prop:dim_G_nd} on the dimensions, we conclude that the standard monomials form a basis of $G_n$. Moreover, this means that $R$ generates the ideal $\II_n$, since any additional relation reduces the dimension of some graded component $G_{n,d}$.

To see that $R$ is in fact a Gr\"obner basis of $\II_n$, we need to show that the leading terms of polynomials in $R$, which are $\{L_PL_Q\:|\: P,Q\text{ are noncomparable}\}$, generate the initial ideal $\mathrm{init}_{\preccurlyeq}(\II_n)$. In other words, it suffices to show that for any $f\in\II_n$, the leading term of $f$ is not standard. Assume the opposite that the leading term of $f$ is $L_{\mathbf{Q}}$ with coefficient $1$, where $\mathbf{Q}=(Q_1\leq\cdots\leq Q_d)$. As explained in the previous paragraph, for any other monomial $L_{\mathbf{Q}'}$ in $f$, we can write it as a linear combination of standard monomials which are weakly greater than $\mathbf{Q}'$, thus strictly greater than $\mathbf{Q}$. Applying this procedure, we obtain a nonzero relation on the standard monomials, contradicting that standard monomials form a basis. 
\end{proof}

\newcommand{\etalchar}[1]{$^{#1}$}

\end{document}